\DeclareMathOperator{\expect}{{\mathbb E}}
\DeclareMathOperator{\var}{Var}
\newcommand{\eps}{\varepsilon}
\newcommand{\eqcolon}{\mathrel{\mathord{=}\raise.2\p@\hbox{:}}}
\newcommand{\coloneq}{\mathrel{\raise.2\p@\hbox{:}\mathord{=}}}
\newcommand{\der}{\delta}
\newcommand{\dd}{\mathrm{d}}
\newcommand{\XX}{\mathbb{X}}
\newcommand{\RR}{\mathbb{R}}
\newcommand{\R}{\mathbb R}
\newcommand{\N}{\mathbb N}
\newcommand{\dD}{\mathscr D}
\newcommand{\cF}{\mathscr{F}}
\newcommand{\cD}{\mathscr{D}}
\newcommand{\cC}{\mathscr{C}}
\newcommand{\TT}{\mathbb{T}}
\newtheorem{theorem}{Theorem}[section]
\newtheorem{corollary}[theorem]{Corollary}
\newtheorem{definition}[theorem]{Definition}
\newtheorem{lemma}[theorem]{Lemma}
\newtheorem{notation}[theorem]{Notation}
\newtheorem{proposition}[theorem]{Proposition}
\newtheorem{remark}[theorem]{Remark}
\newtheorem{ansatz}[theorem]{Ansatz}
\newcommand{\mathd}{\mathrm{d}}
\newcommand{\tmscript}[1]{\text{\scriptsize{$#1$}}}
\DeclareMathAlphabet{\mathpzc}{OT1}{pzc}{m}{it}
\begin{document}
\title{Paracontrolled Distributions and the 3-dimensional Stochastic Quantization Equation} 
\author{R. Catellier\\ 
{\small
IRMAR UMR 6625}\\
{\small  Universit\'e de Rennes 1, France
}\\
\bigskip
{\small remi.catellier@gmail.com}\\
K. Chouk\\
{\small
	Technische Universität Berlin, Germany}\\
{\small
khalil.chouk@gmail.com	}
}

\date{\today}
\maketitle
\begin{abstract}
We prove the existence and uniqueness of a local in time solution to the periodic $\Phi^4_3$  model of stochastic quantisation using the method of paracontrolled distributions introduced recently by  M. Gubinelli, P. Imkeller and N. Perkowski in~\cite{gubinelli_paraproducts_2012}.
\end{abstract}
\tableofcontents 

\section{Introduction}
We study in this work the following Cauchy problem:
\begin{equation}\label{eq:cauchy problem}
\left\{
\begin{aligned}
\partial_tu & =\Delta_{\mathbb T^3}u-u^3+\xi, \\
u(0,x) & =u^{0}(x) \quad x\in\mathbb T^{3},
\end{aligned}
\right.
\end{equation}
where $\xi$ is a space-time white noise such that $\int_{\mathbb T^3}\xi(\cdot,x)\dd x=0$, i.e. it is a centered Gaussian space-time distribution with covariance function defined formally by:
$$
\mathbb E[\xi(s,x)\xi(t,y)]=\der(t-s)\der(x-y).
$$
As we will see in the sequel the solution $u:\RR_+\times \TT^3 \to \RR$ is expected to be a Schwartz distribution in space but not a function, which will give us some trouble to understand the nonlinear part of this equation. Actually the most challenging part of this work is to define the term $u^3$ and to control it in a suitable topological space. 
\smallskip

To look further into this question, let us start by writing this equation in its mild formulation:
\begin{equation}
\label{eq:milde-stoch}
u=P_tu^0-\int_0^tP_{t-s}(u_s)^3\dd s+X_t,
\end{equation}
where $P_t=e^{t\Delta}$ is the heat flow and $X_t=\int_0^tP_{t-s}\xi_s\dd s$ is  the solution of  the linear equation: 
\begin{equation}\label{eq:linear-ou}
\partial_t X_t=\Delta_{\mathbb T^3} X_t+\xi, \quad X_0=0.
\end{equation}
Moreover $X$ is a Gaussian process and as we will see below $X\in C([0,T];\mathcal C^{-1/2-\eps}(\mathbb T^3))$ for every $\eps>0$, where $\mathcal C^{\alpha}=B_{\infty,\infty}^{\alpha}$ is  the Besov-H\"older space of regularity $\alpha$. The main difficulty of  Equation~\eqref{eq:cauchy problem} comes from the fact that for any fixed time $t$ the spatial
regularity of the solution $u(t,x)$ cannot be better than the one of $X_t$. If we measure the spatial
regularity in the scale of Besov-H\"older spaces $\mathcal C^{\alpha}$ we should expect that at best $u(t, x)\in\mathcal C^\alpha(\mathbb T^3)$ for any $\alpha<-1/2$. In particular 
the term $u^3$ is not well-defined. To give a meaning to the equation, a natural approach would consist in regularizing the noise in $\xi^\eps=\xi\star\rho^\eps$, where $\rho^\eps=\eps^{-3}\rho(\frac{.}{\eps})$ is an approximation of the identity, and trying to get a uniform bound in $\eps$ on the solution $u^\eps$ of the approximate equation
 \begin{equation}\label{eq:mollifier-eq}
 \partial_tu^\eps=\Delta u^\eps-(u^\eps)^3+\xi^\eps.
 \end{equation}
Since the non-linear term diverges when $\eps$ goes to zero, an a priori estimate for the wanted solution is difficult to find. To overcome this problem we have to focus on the following modified equation:
\begin{equation}\label{eq:mollify equation}
 \partial_tu^\eps=\Delta u^\eps-((u^\eps)^3-C_{\eps}u^\eps)+\xi^\eps,
\end{equation}
where $C_{\eps}>0$ is a renormalization constant which diverges when $\eps$ goes to 0. We will show that we have to take $C_\eps\sim \frac{a}{\eps}+b\log(\eps)+c$  to obtain a non trivial limit for $u^\eps$. 

Therefore the aim of this work is to give a meaning to the Equation~\eqref{eq:milde-stoch} and to obtain a (local in time) solution. The method developed here uses some ideas of~\cite{hairer_solving_2013} where the author deals with the KPZ equation. More precisely, we will expand the solution as the sum of stochastic objects involving the Gaussian field $X$ and derive an equation for the remainder, which can be solved by a fixed point argument using the notion of paracontrolled distributions introduced in~\cite{gubinelli_paraproducts_2012}. A solution to this equation has already been constructed in the remarkable paper of Hairer~\cite{hairer_theory_2013} where the author shows the convergence of the solution of the mollified equation \eqref{eq:mollify equation}.   
\smallskip 

The stochastic quantization problem has been studied since the eighties in theoretical physics (see for examples \cite{jona-lasinio_stochastic_1985} and \cite{ jona-lasinio_large_1990}).

From a mathematical point of view, several articles deal with the 2-dimensional case. Weak probabilistic solutions have been constructed by Jona-Lasinio and Mitter in~\cite{jona-lasinio_stochastic_1985} and  \cite{ jona-lasinio_large_1990}. Some other probabilistic results are obtained thanks to non perturbative methods by Bertini, Jona-Lasinio and Parrinello in~\cite{bertini_stochastic_1993}. In \cite{da_prato_strong_2003} Da Prato and Debussche give a strong (in the  probabilistic sense) formulation for the $2$-dimensional problem.

In a recent work, Hairer~\cite{hairer_theory_2013} solves the 3-dimensional case thanks to his theory of regularity structures.
Hairer's theory of regularity structures is a generalization of rough path theory. Hairer gets his result by giving a generalization of the notion of pointwise H\"older regularity. With this extended notion, it is possible to work on a more abstract space where the solutions are constructed thanks to a fixed point argument, and then to project the abstract solutions into a space of distributions via the so called reconstruction map. Let us point out that the theory of regularity structures is not specific to the $\Phi^4_3$ model and allows to treat a large class of semilinear parabolic stochastic partial differential equation.

\smallskip

In \cite{gubinelli_paraproducts_2012} the authors have introduced a different approach to handle  singular equations, namely the notion of paracontrolled distributions. Even if this notion is less striking and cannot cover, at the moment, all the local-well posedness results obtained via Hairer's approach, it has the advantage to be elementary and more explicit, which can be useful if we want to tackle the problem of non explosion in time  (see~\cite{HM1,HM2} for such matter).  

\smallskip

We will proceed  in two steps. In an analytic part we will extend the flow of the regular equation   
$$
\partial_t u_t=\Delta u_t-u_t^3+3au_t+9bu_t+\xi_t,
$$
where $(a,b)\in\mathbb R^2$ and $\xi\in C([0,T],C^{\infty}(\mathbb T^3))$ to the situation of more irregular driving noise $\xi$. More precisely we will prove that the solution $u$ is a continuous function of $(u^0,R_{a,b}X(\xi))$ with 
\begin{equation}\label{eq:rough-dist}
\begin{split}
R_{a,b}X(\xi)=&(X,X^2-a,I(X^3-3aX),I(X^3-3aX)\circ X,\\&I(X^2-a)\circ(X^2-a)-b,I(X^3-3aX)\circ(X^2-a)-3bX).
\end{split}
\end{equation}
Here $X_t=\int_0^tP_{t-s}\xi \dd s$, and $f\circ g$ denotes the part of the product between $f$ and $g$ where the two functions have the same frequency (see Proposition~\ref{proposition:Bony-estim} for the exact definition) and $I(f)_t=\int
_0^t P_{t-s}f\dd s$. This extension is given in the following theorem.
\begin{remark}
Let us remark that the vector appearing in the right hand side of~\eqref{eq:rough-dist} does not depend on $\xi$ in the sense that it can be defined for every function $X$ in $C([0,T],C^{\infty}(\mathbb T^2))$. In that case we will keep simply the same notation $R_{a,b}X$ for it.
\end{remark}

\begin{theorem}
\label{Th:Flow-const}
Let $F:\mathcal C^{1}(\mathbb T^3)\times C(\mathbb R^+,\mathcal C^0(\mathbb T^3))\times\mathbb R\times \RR\to C(\mathbb R^+,\mathcal C^{1}(\mathbb T^3))$ be the flow of the equation 
\begin{equation*}
\left\{
\begin{aligned}
&\partial_tu_t=\Delta u_t-u_t^3+3au_t+9bu_t+\xi_t,\quad t\in \big[0,T_C(u^0,X,(a,b))\big),
\\&\partial_tu_t=0,\quad t\geq T_C(u^0,X,(a,b)),
\\& u(0,x)=u^{0}(x)\in\mathcal C^{1}(\mathbb T^3),
\end{aligned}
\right.
\end{equation*}
where $\xi\in C(\mathbb R^+,\mathcal C^0(\mathbb T^3))$ and $T_{C}(u^0,\xi,(a,b))$ is a time such that the equation holds for $t\le T_C$. Now let $z\in(1/2,2/3)$, then there exists a Polish space $\mathcal X$, called the space of rough distributions, $\tilde T_C:\mathcal C^{-z}\times\mathcal X\to\mathbb R^{+}$ a lower semi-continuous function and a function $\tilde F:\mathcal C^{-z}\times\mathcal X\to C(\mathbb R^+,C^{-z}(\mathbb T^3))$ which is continuous in $(u^0,\mathbb X)\in C^{-z}(\mathbb T^3)\times \mathcal X$ such that  $(\tilde F,\tilde T)$ extends $(F,T)$ in the following sense:
$$
T_{C}(u^0,\xi,(a,b))\geq\tilde T_{C}(u^0,R_{a,b}X(\xi))>0
$$
and
$$
F(u^0,\xi,a,b)(t)=\tilde F(u^0,R_{a,b}X(\xi))(t), \mathrm{\ for\ all\ } t\leq\tilde T_{C}(u^0,R_{a,b}X(\xi)), 
$$  
for all $(u^0,\xi)\in\mathcal C^{1}(\mathbb T^3)\times C(\mathbb R^+,\mathcal C^0(\mathbb T^3))$, $(a,b)\in\mathbb R^2$ with $X_t=\int_0^t\dd sP_{t-s}\xi$ and 
where $R_{a,b}^\varphi$ is given in~\eqref{eq:rough-dist}.
\end{theorem}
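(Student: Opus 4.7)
The plan is to start from the mild formulation, perform a partial series expansion to isolate the (analytically intractable) stochastic terms, and then close a local fixed point for the remainder in a suitable space of paracontrolled distributions. First I would substitute $u = X + v$ into \eqref{eq:cauchy problem}, producing
\[
\partial_t v = \Delta v - (X^3 - 3aX) - 3(X^2-a)\,v - 3X v^2 - v^3 + 9b(X + v),
\]
so that the renormalization constants $a$ and $b$ are naturally absorbed into the Wick objects $X^2 - a$ and $X^3 - 3aX$. To gain one further order of parabolic regularity I would then set $v = -I(X^3 - 3aX) + w$. The equation for $w$ contains the two a priori ill-defined products $(X^2 - a)\cdot w$ and $(X^2 - a)\cdot I(X^3 - 3aX)$; by Bony decomposition their paraproduct parts are harmless, but their resonance parts $\pi_0(\cdot,\cdot)$ must be prescribed. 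These are exactly the components collected in $R^\varphi_{a,b}\mathbf X$ of \eqref{eq:rough-dist}, with the auxiliary $\varphi$ providing the extra splitting that moves the irregular part of the stochastic iterated integrals into the input data.

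Next I would impose on $w$ a paracontrolled ansatz of the form $w = -3\,\pi_<(w, I(X^2-a)) + w^\sharp$, with $w^\sharp$ one Besov order smoother than $w$. Plugging this into the equation for $w$ and applying the commutator lemma from \cite{gubinelli_paraproducts_2012} rewrites the dangerous resonance $\pi_0(X^2 - a, w)$ in terms of $w^\sharp$, $w$, and the prescribed data $\pi_0(I(X^2-a), X^2-a) - b - \varphi$ and $\pi_0(I(X^3-3aX), X^2-a) - 3bX - 3\varphi X$. The system for $(w, w^\sharp)$ then becomes a classical fixed point problem, which I would solve by Picard iteration in a small ball of $L^\infty_T \mathcal{C}^\alpha$-type norms on a short time interval, using Schauder estimates for the heat semigroup combined with continuity of the paraproducts on Hölder--Besov scales. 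Taking the difference of two Picard iterates gives continuity of $\tilde F$ in $(u^0, \mathbf X) \in \mathcal{C}^{-z}(\mathbb T^3) \times \mathcal{X}$.

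Finally, I would define $\tilde T_C$ as the blow-up time of the maximal paracontrolled solution obtained by patching local ones; its lower semi-continuity follows from the same contraction argument, since data close in $\mathcal{C}^{-z} \times \mathcal{X}$ yield solutions living on a common nonempty time interval. For consistency, when $\xi \in C(\mathbb{R}_+, \mathcal{C}^0(\mathbb T^3))$ the classical solution $F(u^0,\xi,a,b)$ is smooth enough that all objects in $R^\varphi_{a,b}\mathbf X$ coincide with their classical values, and the classical solution trivially satisfies the paracontrolled ansatz; uniqueness in the fixed point then gives $\tilde F(u^0,R^\varphi_{a,b}\mathbf X)(t) = F(u^0,\xi,a,b)(t)$ for $t \leq \tilde T_C$ and in particular $T_C \geq \tilde T_C$. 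The main obstacle is step two: finding the correct paracontrolled ansatz and verifying that, after the commutator decompositions, the remaining products can all be controlled by paraproduct estimates using precisely the finite list of stochastic objects packaged into $R^\varphi_{a,b}\mathbf X$. Identifying that finite list and checking that the resulting fixed point closes for $z \in (1/2,2/3)$ is the analytic heart of the construction.
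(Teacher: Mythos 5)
Your proposal follows essentially the same route as the paper: write $u=X+\Phi$ with $\Phi$ given by $I(X^{\Diamond 3})$ plus a paracontrolled term plus a smoother remainder, use the commutator estimate of Proposition~\ref{proposition:comm-1} to replace the ill-posed resonances by the prescribed components of $R^{\varphi}_{a,b}\mathbf{X}$, and close a contraction in weighted H\"older--Besov norms, with continuity in $(u^0,\mathbb X)$ and consistency for smooth data obtained exactly as you describe. The one point to flag is that your list of a priori ill-defined products is incomplete: besides the two involving $X^2-a$, the term $3Xv^2$ produces the resonance of $I(X^3-3aX)^2$ against $X$, which is why $\pi_{0}(I(X^3-3aX),X)$ is also a component of the rough distribution and why the paper treats $I(\Phi^2 X)$ in a separate subsection via the same commutator argument.
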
  
In a second part we will obtain probabilistic estimates for the stationary Ornstein Uhlenbeck (O.U.) process  which is the solution of the linear equation~\eqref{eq:linear-ou} and this will allow us to construct the rough distribution in that case.  
\begin{theorem}\label{theorem:main-theo-2}
Let $X$ be the stationary O.U. process and $X^\eps$ be a spatial mollification of $X$ defined by 
$$
X_t^\eps=\sum_{k\in\mathbb Z^3}f(\eps k)\hat X_t(k)e_k,\quad t\geq 0,
$$
where $\hat X$ is the Fourier transform of $X$ in the space variable, $(e_k)_{k\in\mathbb Z^3}$ the Fourier basis of $L^2(\mathbb T^3)$ and $f$ is a smooth function with compact support which satisfies $f(0)=1$.  Then there exists two diverging constants (not unique) $C^\eps_1,C_2^\eps\to^{\eps\to0}+\infty$ such that $R_{C_1^\eps,C_2^\eps}X^\eps$ converges in $L^{p}(\Omega,\mathcal X)$ for all $p>1$. Moreover the limit $\mathbb X\in\mathcal X$ does not depend on the choice of the mollification $f$ and the first component of $\mathbb X$ is $X$. 
\end{theorem}
\begin{remark}
The choice of the constants $C^\eps_1,C^\eps_2$ is not unique and depends in general of the choice of the mollification $f$. However, as it is mentioned in \cite{HW} the constant $C_2^\eps$ can be taken being independent of the mollification.  
\end{remark}
In this setting, the corollary below follows immediately.
\begin{corollary}
Let $\xi$ be a space time white noise, and $\xi^\eps$ be a spatial mollification of $\xi$ such that:
$$
\xi^\eps=\sum_{k\ne0}f(\eps k)\hat \xi(k)e_k,
$$ 
where we have adopted the same assumptions and notations as in Theorem~\ref{theorem:main-theo-2}. Let $X$ be the stationary O.U. process associated to $\xi$, $\mathbb X$ the element of $\mathcal X$ given by  Theorem~\ref{theorem:main-theo-2} and $u^0\in\mathcal C^{-z}$ for $z\in(1/2,2/3)$. Then there exists a sequence of positive time $T^\eps$ which converges almost surely to a random time $T>0$ and such that the solution $u^\eps$ of the mollified equation:
\begin{equation*}
\left\{
\begin{aligned}
&\partial_tu^\eps_t=\Delta u^\eps_t-(u^\eps_t)^3+3C_1^\eps u_t+9C_2^\eps u_t+\xi^\eps_t,\quad t\in[0,T^\eps),
\\&\partial_tu^\eps_t=0,\quad t\geq T^\eps,
\\& u(0,x)=(u^{0})^\eps(x),
\end{aligned}
\right.
\end{equation*}
converges to $\tilde F(u^0,\mathbb X)$. Here the limit is understood in the probability sense in the space $C(\mathbb R^+,\mathcal C^{-z})$. 
\end{corollary}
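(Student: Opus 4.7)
The plan is to deduce the corollary from the combination of the deterministic continuity statement (Theorem~1.1) and the probabilistic convergence of the rough enhancement (Theorem~\ref{theorem:main-theo-2}). Since $f$ has compact support, for each fixed $\eps>0$ the mollified noise $\xi^\eps$ lies in $C(\mathbb R^+,\mathcal C^0(\mathbb T^3))$ almost surely, so the classical flow $F((u^0)^\eps,\xi^\eps,C_1^\eps,C_2^\eps)$ from Theorem~1.1 is well defined. By the consistency part of Theorem~1.1, applied pathwise with $a=C_1^\eps$, $b=C_2^\eps$, and $\varphi=\varphi^\eps$, one gets
$$
u^\eps(t)=\tilde F\bigl((u^0)^\eps,R^{\varphi^\eps}_{C_1^\eps,C_2^\eps}\mathbf X^\eps\bigr)(t),\qquad t\le \tilde T_C\bigl((u^0)^\eps,R^{\varphi^\eps}_{C_1^\eps,C_2^\eps}\mathbf X^\eps\bigr),
$$
which replaces the classical solution by the extended one on the common interval of definition.

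Next, Theorem~\ref{theorem:main-theo-2} guarantees that $R^{\varphi^\eps}_{C_1^\eps,C_2^\eps}\mathbf X^\eps\to\mathbb X$ in $L^p(\Omega,\mathcal X)$, and hence in probability on the Polish space $\mathcal X$. Standard mollification estimates in Besov spaces give $(u^0)^\eps\to u^0$ in $\mathcal C^{-z'}$ for any $z'>z$; picking $z'\in(1/2,2/3)$ if necessary (and invoking the monotonicity $\mathcal C^{-z}\hookrightarrow\mathcal C^{-z'}$) one thus obtains joint convergence of $\bigl((u^0)^\eps,R^{\varphi^\eps}_{C_1^\eps,C_2^\eps}\mathbf X^\eps\bigr)$ to $(u^0,\mathbb X)$ in probability in $\mathcal C^{-z'}\times\mathcal X$. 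Applying the continuity of $\tilde F$ from Theorem~1.1 then yields the convergence of $\tilde F\bigl((u^0)^\eps,R^{\varphi^\eps}_{C_1^\eps,C_2^\eps}\mathbf X^\eps\bigr)$ to $\tilde F(u^0,\mathbb X)$ in probability in $C(\mathbb R^+,\mathcal C^{-z'}(\mathbb T^3))$.

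The last step is to transfer this convergence to the mollified solutions themselves, which requires a careful treatment of the (random) interval of existence. Here I use the lower semi-continuity of $\tilde T_C$: since $\tilde T_C(u^0,\mathbb X)>0$ almost surely, for any deterministic $T<\tilde T_C(u^0,\mathbb X)$ one has $\tilde T_C\bigl((u^0)^\eps,R^{\varphi^\eps}_{C_1^\eps,C_2^\eps}\mathbf X^\eps\bigr)\ge T$ with probability tending to $1$ as $\eps\to 0$. On the event where this holds, $u^\eps$ agrees with $\tilde F\bigl((u^0)^\eps,R^{\varphi^\eps}_{C_1^\eps,C_2^\eps}\mathbf X^\eps\bigr)$ on $[0,T]$, and letting $T$ approach the limiting explosion time one obtains convergence in probability in $C(\mathbb R^+,\mathcal C^{-z'}(\mathbb T^3))$, which is exactly the content of the corollary (possibly after relabeling $z'\mapsto z$).

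The main obstacle in this strategy is precisely the local-in-time character of the solutions: one has to marry the deterministic identification $u^\eps=\tilde F((u^0)^\eps,\cdot)$, valid only up to the random explosion time of the $\eps$-problem, with the continuity of $\tilde F$ on the full space $\mathcal C^{-z}\times\mathcal X$. Everything else—the $L^p$-to-probability passage, the Besov convergence of the mollified initial data, and the continuity of $\tilde F$—is packaged in the two preceding theorems and only needs to be assembled.
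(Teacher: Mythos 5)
Your argument is correct and follows exactly the route the paper intends: the paper gives no separate proof of this corollary, stating only that it "follows immediately" from the consistency/continuity statement for $\tilde F$ (Theorem~1.1, made quantitative in Proposition~\ref{proposition:cont}) combined with the convergence of the rough enhancement in Theorem~\ref{theorem:main-theo-2}. Your additional care about the mollified initial data and the random existence times via lower semicontinuity of $\tilde T_C$ simply fills in details the paper leaves implicit.
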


\paragraph{Plan of the paper.}
The aim of Section~\ref{section:expansion} is to introduce the space of paracontrolled distributions where the renormalized equation will be solved.
In Section~\ref{section:fixed point} we prove that for a small time the application associated to the renormalized equation is a contraction, which, by a fixed point argument, gives the existence and uniqueness of the solution, but also the continuity with respect to the rough distribution and the initial condition.
The last Section~\ref{section:renormalization} is devoted to the existence of the rough distribution for the O.U. process.

\paragraph{Acknowledgments.}
This work was written when the two authors were PhD student at the CEREMADE (Dauphine University, CEREMADE UMR 7534). Both authors are grateful to M. Gubinelli for the extended discussions they had together about this work. We also thank the anonymous referees for their help in greatly improving this paper. 

\section{Paracontrolled distributions}\label{section:expansion}

\subsection{Besov spaces and paradifferential calculus}

The results given in this Subsection can be found in~\cite {BCD-bk} and~\cite{gubinelli_paraproducts_2012}. Let us start by recalling the definition of Besov spaces via the Littelwood-Paley projectors.   

Let $\chi, \theta\in \mathcal D$ be two non-negative radial functions such that
\begin{enumerate}
	\item The support of $\chi$ is contained in a ball and the support of $\theta$ is contained in an annulus;
	\item $\chi(\xi)+\sum_{j\ge0}\theta(2^{-j}\xi)=1 \mathrm{\ for\ all}\ \xi\in\RR^d$;
	\item $\mathrm{supp}(\chi) \cap \mathrm{supp}(\theta(2^{-j}.)) = \emptyset$ for $i \ge 1$ and $\mathrm{supp}(\theta(2^{-j}.)) \cap \mathrm{supp}(\theta(2^{-i}.)) = \emptyset$ when $|i-j| > 1$.
\end{enumerate}
For the existence of $\chi$ and $\theta$ see \cite{BCD-bk}, Proposition 2.10. The Littlewood-Paley blocks are
defined as
$$\Delta_{-1} u = \cF^{-1}(\chi \cF u)\ \mathrm{and\ for}\ j \ge 0, \Delta_j u = \cF^{-1}(\theta(2^{-j}.)\cF u),$$

where $\mathscr F$ denotes the Fourier transform. We define the Besov space of distribution by 
$$
B_{p,q}^{\alpha}=\left\{u\in S'(\mathbb R^d); \quad \|u\|^q_{B_{p,q}^{\alpha}}=\sum_{j\geq-1}2^{jq\alpha}\|\Delta_ju\|^q_{L^p}<+\infty\right\}.
$$
 In the sequel we will deal with the special case of $\mathcal C^{\alpha}:=B_{\infty,\infty}^{\alpha}$ and write $\|u\|_{\alpha}=\|u\|_{B_{\infty,\infty}^{\alpha}}$. We give the following result for the convergence of localized series in Besov spaces, which will prove itself to be useful.
\begin{proposition}
Let $(p,q,s)\in[1,+\infty]^2\times\mathbb R$, $B$ be a ball in $\mathbb R^d$ and $(u_j)_{j\geq-1}$ be a sequence of functions such that $\mathrm{supp}(u_j)$ is contained in $2^jB$, moreover we assume that 
$$
\Xi_{p,q,s}=\left|\left|(2^{js}\|u_j\|_{L^p})_{j\geq-1}\right|\right|_{l^{q}}<+\infty.
$$ 
Then
$u=\sum_{j\geq-1}u_j\in B_{p,q}^s $ and $\|u\|_{B_{p,q}^s}\lesssim\Xi_{p,q,s}$. 
\end{proposition}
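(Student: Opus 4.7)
The plan is to control each Littlewood--Paley block $\Delta_i u$ of the formal sum $u=\sum_{j\ge-1}u_j$ and then assemble the weighted $l^q$-norm in $i$ into a bound by $\Xi_{p,q,s}$. The crucial observation, interpreting the support hypothesis in its Fourier sense, is that combining $\mathrm{supp}(\cF u_j)\subset 2^j B$ with property (3) of the partition of unity $(\chi,\theta)$ produces a spectral cut-off: when $2^j B$ is an annulus one has $\Delta_i u_j\equiv 0$ as soon as $|i-j|>N_0$ for a universal constant $N_0$ depending only on $\chi,\theta$. Consequently, for each fixed $i$ only finitely many indices $j$ contribute to $\Delta_i u$, and in particular the series defining $u$ makes distributional sense.

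The next ingredient is a uniform $L^p$-bound on the blocks. Since $\Delta_i$ is convolution with the rescaling of a fixed Schwartz function, its operator norm on $L^p$ is bounded uniformly in $i$ by Young's convolution inequality. Combined with the spectral localization, this yields
$$
\|\Delta_i u\|_{L^p}\le \sum_{|i-j|\le N_0}\|\Delta_i u_j\|_{L^p}\lesssim \sum_{|i-j|\le N_0}\|u_j\|_{L^p}.
$$
Multiplying by $2^{is}$ and writing $2^{is}=2^{(i-j)s}\,2^{js}$ recasts the right-hand side as a discrete convolution on $\ZZ$ between the sequence $(2^{js}\|u_j\|_{L^p})_j$ (extended by zero outside its range) and a finitely supported kernel.

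Taking $l^q$-norms in $i$ and invoking Young's inequality for discrete convolutions then gives $\bigl\|(2^{is}\|\Delta_i u\|_{L^p})_{i\ge-1}\bigr\|_{l^q}\lesssim \Xi_{p,q,s}$, which is exactly the claim. The one point that is not purely mechanical is the spectral cut-off itself: if $B$ is a ball centered at the origin rather than an annulus, the cut-off becomes one-sided, $\Delta_i u_j=0$ only for $i>j+N_0$, and summability of the resulting one-sided convolution kernel forces the additional restriction $s>0$. Tracking this case distinction is the main subtlety; once it is settled, Young's inequality closes the argument immediately.
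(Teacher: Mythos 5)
Your argument is correct and is exactly the standard proof (the paper does not prove this proposition itself but imports it from Bahouri--Chemin--Danchin, where the argument is the one you give): uniform $L^p$-boundedness of the blocks $\Delta_i$ via Young's inequality, the spectral cut-off $\Delta_i u_j=0$ outside a restricted range of $(i,j)$, and then Young's inequality for the discrete convolution of $(2^{js}\|u_j\|_{L^p})_j$ with a summable kernel. Your closing observation is a genuine and worthwhile one rather than a defect of your proof: as literally stated, with $\mathrm{supp}(\mathcal{F}u_j)$ contained in the \emph{ball} $2^jB$ and $s\in\mathbb{R}$ arbitrary, the proposition is false (take all $u_j$ equal to a fixed nonzero $v$ with compactly supported Fourier transform and $s<0$); one needs either $s>0$ in the ball case or supports in dilated \emph{annuli} for general $s$, which is how the result is actually used in the paper. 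The only point worth making explicit in a written version is the a priori convergence of $\sum_j u_j$ in $\mathcal{S}'$ before commuting $\Delta_i$ with the sum --- immediate from $\sum_j\|u_j\|_{L^p}\lesssim\Xi_{p,q,s}\sum_j 2^{-js}<\infty$ when $s>0$, and requiring the usual separate (routine) argument in the annulus case.
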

 The trick to manipulate  stochastic objects is to deal with Besov spaces with finite integrability exponents and then to go back to the space $\mathcal C^\alpha$. For that we will use the following embedding result:
\begin{proposition}\label{proposition:Bes-emb} 
Let $1\leq p_1\leq p_2\leq +\infty$ and $1\leq q_1\leq q_2\leq +\infty$. For all $s\in \mathbb R$ the space $B_{p_1,q_1}^{s}$ is continuously embedded in $B_{p_2,q_2}^{s-d(\frac{1}{p_1}-\frac{1}{p_2})}$, in particular we have $\|u\|_{\alpha-\frac{d}{p}}\lesssim\|u\|_{B_{p,p}^{\alpha}}$.
 \end{proposition}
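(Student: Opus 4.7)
The plan is to reduce the claim to the Bernstein inequality applied block-by-block in the Littlewood--Paley decomposition, and then to exploit the monotonicity of $\ell^q$ spaces. Recall that Bernstein's inequality states that if $v\in L^{p_1}(\mathbb R^d)$ has Fourier transform supported in a ball (or annulus) of radius comparable to $2^j$, then for $1\le p_1\le p_2\le\infty$
\[
\|v\|_{L^{p_2}} \lesssim 2^{jd(\frac{1}{p_1}-\frac{1}{p_2})} \|v\|_{L^{p_1}},
\]
with the implied constant depending only on $d$, $p_1$, $p_2$ and the annulus/ball. This is the standard tool from \cite{BCD-bk}, Lemma 2.1, and I would simply cite it.

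First I would apply this pointwise (in $j$) to $v=\Delta_j u$, whose Fourier support is contained in an annulus of size $2^j$ for $j\ge 0$ (and in a fixed ball for $j=-1$, which only gives better estimates). This yields
\[
\|\Delta_j u\|_{L^{p_2}} \lesssim 2^{jd(\frac{1}{p_1}-\frac{1}{p_2})}\|\Delta_j u\|_{L^{p_1}}.
\]
Multiplying by $2^{j(s-d(\frac{1}{p_1}-\frac{1}{p_2}))}$ one obtains
\[
2^{j(s-d(\frac{1}{p_1}-\frac{1}{p_2}))}\|\Delta_j u\|_{L^{p_2}} \lesssim 2^{js}\|\Delta_j u\|_{L^{p_1}},
\]
so the sequence on the left is dominated termwise by the sequence defining the $B_{p_1,q_1}^s$ norm.

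Finally I would take the $\ell^{q_2}$ norm in $j$ of the left-hand side and use the trivial embedding $\ell^{q_1}\hookrightarrow \ell^{q_2}$ (valid since $q_1\le q_2$) on the right-hand side, obtaining
\[
\|u\|_{B_{p_2,q_2}^{s-d(\frac{1}{p_1}-\frac{1}{p_2})}} \lesssim \|u\|_{B_{p_1,q_1}^{s}},
\]
which is the announced embedding. The particular case $p_1=q_1=p$, $p_2=q_2=\infty$, $s=\alpha$ yields $\|u\|_{\alpha-d/p}\lesssim \|u\|_{B_{p,p}^\alpha}$. There is no genuine obstacle here: everything reduces to Bernstein's inequality, and the only mildly delicate point is the harmless distinction between the low-frequency block $\Delta_{-1}u$ (ball support) and the higher blocks $\Delta_j u$ for $j\ge 0$ (annulus support), both of which admit the same type of estimate.
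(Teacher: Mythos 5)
Your proof is correct and is exactly the standard argument (Bernstein's inequality block-by-block, then the $\ell^{q_1}\hookrightarrow\ell^{q_2}$ inclusion) given in the reference \cite{BCD-bk} that the paper cites for this proposition; the paper itself supplies no proof. Nothing to add.
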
   
Taking $f\in\mathcal C^{\alpha}$ and $g\in \mathcal C^{\beta}$ we can formally decompose the product as  
$$
fg=f\prec g+f\circ g+f\succ g,
$$
where 
$$
f\prec g=g\succ f=\sum_{j\geq-1}\sum_{i<j-1}\Delta_if\Delta_jg\quad \mathrm{and} \quad f\circ g=\sum_{j\geq-1}\sum_{|i-j|\leq 1}\Delta_if\Delta_jg.
$$
With these notations the following results hold.
\begin{proposition}[Bony estimates]
\label{proposition:Bony-estim}
Let $\alpha,\beta\in\mathbb R$.
\begin{itemize}
\item $\|f\prec g\|_{\beta}\lesssim\|f\|_{\infty}\|g\|_{\beta}$ for $f\in L^\infty$ and $g\in \mathcal C^\beta$.
\item $\|f\succ g\|_{\alpha+\beta}\lesssim\|f\|_{\alpha}\|g\|_{\beta}$  for $\beta<0$, $f\in\mathcal C^\alpha$ and $g\in\mathcal C^\beta$.
\item $\|f\circ g\|_{\alpha+\beta}\lesssim \|f\|_{\alpha}\|g\|_{\beta}$ for $\alpha+\beta>0$ and $f\in\mathcal C^\alpha$ and $g\in\mathcal C^\beta$.
\end{itemize}
A simple consequence of these estimates is that the product $fg$ between two Besov distributions $f\in\mathcal C^\alpha$ and $g\in\mathcal C^\beta$ is well-defined if $\alpha+\beta>0$ moreover it satisfies $\|fg\|_{\min(\alpha,\beta)}\lesssim\|f\|_{\alpha}\|g\|_{\beta}$.
\end{proposition} 
One of the key results of~\cite{gubinelli_paraproducts_2012} is a commutation Lemma for the operator $\prec$ and $\circ$.
\begin{proposition}[Commutator estimate]
\label{proposition:comm-1}
Let $\alpha, \beta,\gamma\in\mathbb R$ be such that $\alpha<1$,  $\alpha+\beta+\gamma>0$ and $\beta+\gamma<0$ then
$$
R(f,x,y)=(f\prec x)\circ y-f(x\circ y)
$$
is well-defined for $f\in\mathcal C^\alpha$, $x\in\mathcal C^\beta$ and $y\in\mathcal C^\gamma$. More precisely 
$$
\|R(f,x,y)\|_{\alpha+\beta+\gamma}\lesssim\|f\|_{\alpha}\|x\|_{\beta}\|y\|_{\gamma}.
$$
\end{proposition}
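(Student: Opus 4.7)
The strategy is to expand both paraproducts into Littlewood–Paley series, reorganize the resulting sum, and exploit the spectral localization of each factor so that the Besov localization proposition applies. First, writing
$$
\pi_<(f,x) = \sum_{i\geq -1} S_{i-1}f\,\Delta_i x, \qquad S_{i-1}f = \sum_{\ell\leq i-2}\Delta_\ell f,
$$
and using that $S_{i-1}f\,\Delta_i x$ is spectrally localized in an annulus of size $2^i$, the projector $\Delta_j$ applied to $\pi_<(f,x)$ picks only indices $i$ with $|i-j|\leq N_0$ for some fixed $N_0$. Inserting this into the resonant product and subtracting $f\,\pi_0(x,y) = \sum_{|j-k|\leq 1}f\,\Delta_j x\,\Delta_k y$, after absorbing the boundary corrections into harmless Littlewood–Paley commutators $[\Delta_j, S_{i-1}f]\Delta_i x\,\Delta_k y$, the problem reduces to bounding the principal telescopic term
$$
T(f,x,y) \;=\; \sum_{|j-k|\leq 1} (S_{j-1}f - f)\,\Delta_j x\,\Delta_k y.
$$

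To bound $T$ in $\mathcal{C}^{\alpha+\beta+\gamma}$, I would use $S_{j-1}f - f = -\sum_{\ell\geq j-1}\Delta_\ell f$ and swap the order of summation to obtain
$$
T(f,x,y) \;=\; -\sum_{\ell\geq -1} u_\ell, \qquad u_\ell \;=\; \sum_{\substack{|j-k|\leq 1 \\ j\leq \ell+1}}\Delta_\ell f\,\Delta_j x\,\Delta_k y.
$$
Because $\Delta_\ell f$, $\Delta_j x$, $\Delta_k y$ are spectrally supported in annuli of sizes $\sim 2^\ell$, $\sim 2^j$, $\sim 2^k$ with $j,k\leq \ell+2$, each $u_\ell$ has Fourier support in a ball of radius $\lesssim 2^\ell$. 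The Bernstein estimates
$$
\|\Delta_\ell f\|_{L^\infty}\lesssim 2^{-\ell\alpha}\|f\|_\alpha,\;
\|\Delta_j x\|_{L^\infty}\lesssim 2^{-j\beta}\|x\|_\beta,\;
\|\Delta_k y\|_{L^\infty}\lesssim 2^{-k\gamma}\|y\|_\gamma,
$$
combined with $\beta+\gamma<0$ so that $\sum_{j\leq \ell+1}2^{-j(\beta+\gamma)}\lesssim 2^{-\ell(\beta+\gamma)}$, give
$$
\|u_\ell\|_{L^\infty}\lesssim 2^{-\ell(\alpha+\beta+\gamma)}\|f\|_\alpha\|x\|_\beta\|y\|_\gamma.
$$
The Besov localization proposition above then yields $T\in\mathcal{C}^{\alpha+\beta+\gamma}$ with the claimed norm, and $\alpha+\beta+\gamma>0$ ensures the series $\sum_\ell u_\ell$ converges absolutely in $L^\infty$.

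The main technical obstacle is justifying the reduction to $T(f,x,y)$: the decomposition of $\Delta_j\pi_<(f,x)$ produces LP-commutators $[\Delta_j,S_{i-1}f]\Delta_i x$ which must be controlled by the regularity of $f$. Here the hypothesis $\alpha<1$ is essential, through the mean-value bound $\|\nabla S_{i-1}f\|_{L^\infty}\lesssim 2^{i(1-\alpha)}\|f\|_\alpha$ applied in the integral representation $[\Delta_j,S_{i-1}f]g(x) = \int K_j(x-y)(S_{i-1}f(y)-S_{i-1}f(x))g(y)\,\dd y$; these commutators then obey the same spectral localization and $L^\infty$-bound as the $u_\ell$ above and thus contribute to the same $\mathcal{C}^{\alpha+\beta+\gamma}$ estimate. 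A secondary subtlety is that when $\beta+\gamma<0$ the resonant product $\pi_0(x,y)$ has no direct $L^\infty$-meaning, so one should first establish the estimate for smooth $f,x,y$ — where all rearrangements are legitimate — and then extend by density using the continuity of the trilinear map $R$.
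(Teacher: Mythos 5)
The paper itself contains no proof of Proposition~\ref{proposition:comm-1}: it is imported verbatim from \cite{gubinelli_paraproducts_2012}, and only the heat-flow commutator (Lemma~\ref{l:multiplier paraproduct commutator}) is proved in the appendix. Your argument is essentially the standard proof of the GIP commutator lemma and is correct in outline: reduce $\Delta_j\pi_<(f,x)$ to $S_{j-1}f\,\Delta_j x$ modulo commutators $[\Delta_j,S_{i-1}f]\Delta_i x$ controlled via $\|\nabla S_{i-1}f\|_{L^\infty}\lesssim 2^{i(1-\alpha)}\|f\|_\alpha$ (this is where $\alpha<1$ enters), then bound the telescopic term $\sum_{|j-k|\le 1}(S_{j-1}f-f)\Delta_jx\,\Delta_ky$ by resumming over the high block of $f$ and invoking the ball-localization proposition with exponent $\alpha+\beta+\gamma>0$.

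Three small points deserve attention. First, the reduction actually produces $\sum_{|i-j|\le N_0}S_{i-1}f\,\Delta_j\Delta_i x$, so you also need to replace $S_{i-1}f$ by $S_{j-1}f$ (a finite sum of blocks $\Delta_\ell f$ with $\ell\sim j$, each of size $2^{-j\alpha}\|f\|_\alpha$, hence harmless) before using $\sum_{|i-j|\le 1}\Delta_j\Delta_i x=\Delta_j x$. Second, your telescopic bound silently uses $\alpha>0$ in $\|S_{j-1}f-f\|_{L^\infty}\lesssim 2^{-j\alpha}\|f\|_\alpha$; this is legitimate but should be flagged, since $\alpha>0$ follows from $\alpha+\beta+\gamma>0$ and $\beta+\gamma<0$. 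Third, the closing density argument is not quite the right mechanism: smooth functions are not dense in $\mathcal C^\alpha=B^\alpha_{\infty,\infty}$. The cleaner route --- and the one your own computation already supplies --- is to \emph{define} $R(f,x,y)$ directly as the rearranged series (commutator terms plus the $u_\ell$), each summand of which makes sense for $f\in\mathcal C^\alpha$, $x\in\mathcal C^\beta$, $y\in\mathcal C^\gamma$ and whose sum converges absolutely by your estimates; no approximation is then needed.
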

We end this section by describing the action of the heat flow on  Besov spaces and by giving a commutation property with the paraproduct. See the appendix for a proof.
\begin{lemma}[Heat-flow estimates]
\label{lemma:Heat-flow-smoothing}
Let $\theta\geq0$ and $\alpha\in\mathbb R$. The following inequalities:
$$
\|P_tf\|_{\alpha+2\theta}\lesssim\frac{1}{t^{\theta}}\|f\|_{\alpha},\quad\|(P_{t-s}-1)f\|_{\alpha-2\eps}\lesssim|t-s|^{\eps}\|f\|_{\alpha},
$$
holds for all $f\in\mathcal C^\alpha$. 
Moreover if $\alpha<1$ and $\beta\in\mathbb R$ we have 
$$
\|P_t(f\prec g)-f\prec P_tg\|_{\alpha+\beta+2\theta}\lesssim\frac{1}{t^{\theta}}\|f\|_\alpha\|g\|_\beta,
$$
for all $g\in\mathcal C^\beta$. 
\end{lemma}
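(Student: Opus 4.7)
The plan is to handle each of the three inequalities by reducing to Littlewood--Paley blocks and exploiting the fact that $\Delta_j P_t f$ has Fourier support in a fixed annulus of size $2^j$ for $j\ge 0$. For the first bound, the heat semigroup acts as the Fourier multiplier $e^{-t|\xi|^2}$, which on this annulus gives the Bernstein-type inequality $\|P_t\Delta_j f\|_\infty\lesssim e^{-ct2^{2j}}\|\Delta_j f\|_\infty$ for some $c>0$. Using $\sup_{x\ge 0}x^\theta e^{-cx}<\infty$ with $x=t2^{2j}$ yields $2^{j(\alpha+2\theta)}\|P_t\Delta_j f\|_\infty\lesssim t^{-\theta}2^{j\alpha}\|\Delta_j f\|_\infty\lesssim t^{-\theta}\|f\|_\alpha$, from which the inequality follows after taking the supremum over $j$ (the $j=-1$ block being absorbed into $\|f\|_\alpha$). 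For the time-continuity statement, the analogous block bound $\|(P_\tau-I)\Delta_j f\|_\infty\lesssim\min(1,\tau 2^{2j})\|\Delta_j f\|_\infty\lesssim(\tau 2^{2j})^{\eps}\|\Delta_j f\|_\infty$ suffices, since the weight $2^{j(\alpha-2\eps)}$ converts the extra factor $2^{2j\eps}$ into $\tau^{\eps} 2^{j\alpha}$, controlled by $\tau^{\eps}\|f\|_\alpha$.

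The genuine work is the commutator estimate. Setting $S_{j-1}f:=\sum_{i<j-1}\Delta_i f$, one decomposes $P_t\pi_<(f,g)-\pi_<(f,P_tg)=\sum_j E_j^t$ with $E_j^t:=P_t(S_{j-1}f\,\Delta_j g)-S_{j-1}f\,P_t\Delta_j g$. Differentiating $\Phi_j(s):=P_{t-s}\bigl(S_{j-1}f\cdot P_s\Delta_j g\bigr)$ in $s$ and integrating from $0$ to $t$ yields the key identity
$$ E_j^t \;=\; -\int_0^t P_{t-s}\bigl[\Delta S_{j-1}f\cdot P_s\Delta_j g \,+\, 2\,\nabla S_{j-1}f\cdot\nabla P_s\Delta_j g\bigr]\,ds. $$
Bernstein's inequality gives $\|\Delta S_{j-1}f\|_\infty\lesssim 2^{j(2-\alpha)}\|f\|_\alpha$ for arbitrary $\alpha$, while $\|\nabla S_{j-1}f\|_\infty\lesssim 2^{j(1-\alpha)}\|f\|_\alpha$ demands the hypothesis $\alpha<1$ --- this is precisely where that assumption enters. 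Combining these with the block bounds $\|\nabla^k P_s\Delta_j g\|_\infty\lesssim 2^{jk}e^{-cs2^{2j}}\,2^{-j\beta}\|g\|_\beta$ and noting that the integrand is spectrally supported in an annulus of size $\sim 2^j$, so that $P_{t-s}$ contributes an additional factor $e^{-c(t-s)2^{2j}}$, one obtains $\|E_j^t\|_\infty\lesssim t\,e^{-ct2^{2j}}\,2^{j(2-\alpha-\beta)}\,\|f\|_\alpha\|g\|_\beta$. Multiplying by $2^{j(\alpha+\beta+2\theta)}$ and using $\sup_x x^{1+\theta}e^{-cx}<\infty$ with $x=t2^{2j}$ produces the desired $t^{-\theta}\|f\|_\alpha\|g\|_\beta$.

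The point that demands attention is the verification that each $E_j^t$ is spectrally localised at scale $2^j$, so that the global $\mathcal C^{\alpha+\beta+2\theta}$ norm of $\sum_j E_j^t$ can indeed be controlled by the weighted supremum of the block $L^\infty$ norms, via the convergence-of-localised-series proposition recalled earlier. This amounts to tracking how the Fourier support of a product of a ball-supported and an annulus-supported factor behaves under convolution of supports and choosing the dyadic partition so that the resulting spectral support lies in an annulus up to a bounded enlargement; it is routine but the main place where the argument could silently go wrong, and it is the only genuine obstacle in an otherwise mechanical proof.
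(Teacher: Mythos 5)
Your argument is correct, but for the commutator estimate --- the only nontrivial part of the lemma --- you take a genuinely different route from the paper. The paper does not argue with the heat equation at all: in its appendix it proves a more general multiplier--paraproduct commutator bound, namely $\Vert \varphi(\varepsilon\mathcal D)\pi_<(u,v)-\pi_<(u,\varphi(\varepsilon\mathcal D)v)\Vert_{\alpha+\beta+\delta}\lesssim\varepsilon^{-\delta}\Vert u\Vert_\alpha\Vert v\Vert_\beta$ for an \emph{arbitrary} Schwartz multiplier $\varphi$ and any $\delta\ge-1$, by writing each dyadic block as the commutator $[(\psi(2^{-j}\cdot)\varphi(\varepsilon\cdot))(\mathcal D),S_{j-1}u]\Delta_j v$, invoking the first-order Taylor/Young kernel estimate of Bahouri--Chemin--Danchin (Lemma 2.97), and splitting the frequency regimes $\varepsilon 2^j\ge1$ (where the rapid decay of $\varphi$ and its derivatives supplies the gain $(\varepsilon2^j)^{-1-\delta}$) and $\varepsilon2^j\lesssim1$; the lemma is then applied with $\varphi(k)=e^{-|k|^2/2}$. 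Your proof instead exploits the specific semigroup structure: the Duhamel identity obtained by differentiating $P_{t-s}(S_{j-1}f\cdot P_s\Delta_j g)$ reduces everything to Bernstein bounds on $\nabla S_{j-1}f$ and $\Delta S_{j-1}f$ (and this is indeed exactly where $\alpha<1$, respectively $\alpha<2$, is used) together with the exponential decay $e^{-ct2^{2j}}$ on annuli. (Your displayed identity for $E_j^t$ has the wrong overall sign, but this is immaterial for the estimates.) What each approach buys: yours is shorter, more elementary, and makes the role of $\alpha<1$ transparent, but it is tied to the heat flow; the paper's version costs more kernel bookkeeping but yields the statement for general Fourier multipliers, which is why the authors state it in that generality. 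Your treatment of the two smoothing inequalities and of the spectral localisation of the blocks (via the convergence-of-localised-series proposition) is the standard one and is fine, with the usual caveat that the factor $t^{-\theta}$ only dominates the $j=-1$ block for $t$ in a bounded interval, which is all that is needed here.
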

Let us now introduce some notations for functional spaces which will be used extensively in the sequel of this paper. 
\begin{notation}
$$
C^{\beta}_T = C([0,T];\mathcal C^\beta).
$$
For $f\in C^\beta_T$ we introduce the norm 
$$
\|f\|_{\beta}=\sup_{t\in[0,T]}\|f_t\|_{\mathcal C^\beta}=\sup_{t\in[0,T]}\|f_t\|_{\beta}
$$
and the space
$$C_T^{\alpha,\beta}:=C^{\alpha}([0,T],\mathcal C^{\beta}(\mathbb T^3)).$$
We denote the space of $\alpha$-H\"older functions in time with value in the Besov space $\mathcal C^\beta$, where $\alpha>0$. Furthermore, we endow this space with the following distance
\[d_{\alpha,\beta}(f,g)=\sup_{t\neq s \in [0,T]}\frac{\|(f-g)_t-(f-g)_s\|_\beta}{|t-s|^\alpha}+\sup_{t\in[0,T]}\|f_t-g_t\|_{\beta}.\]
\end{notation}
Let us end this section by giving a proposition which is a consequence of Lemma~\ref{lemma:Heat-flow-smoothing}, and in which  we describe the action of the operator $I$ on the Besov spaces.  
\begin{proposition}[Schauder estimates]
\label{prop:schauder}
Let $\beta\in\mathbb R$, $f\in C_{T}^{\beta}$ and $I(f)(t)=-\int_0^tP_{t-s}f_s\dd s$, then the following bound holds fo all $\theta<1$
$$
\|I(f)\|_{C_T^{\beta+2\theta}}\lesssim T^{1-\theta}\|f\|_{C_T^{\beta}}.
$$ 
Moreover if $\alpha\in(0,1)$, $\beta>0$,   $f\in C_T^{\alpha,\beta}$ and $g\in C_{T}^{\gamma}$ then the following commutation estimate holds
$$
\|I(f\prec g)-f\prec I(g)\|_{C_T^{\gamma+2\theta}}\lesssim T^{\kappa}\|f\|_{C_T^{\alpha,\beta}}\|g\|_{\mathcal{C}_T^\gamma},
$$ 
 for all $\theta<\min(\alpha,\beta)+1$ with $\kappa=\min(1-\theta+\beta/2,1-\theta+\gamma)$.
\end{proposition}
\begin{proof}
Only the second estimate requires a proof, since the first one is an immediate consequence of the heat-flow estimates. To get the second bound let us observe that 
$$
I(f\prec g)(t)-f\prec I(g)(t)=I_1(t)+I_2(t),
$$
where 
$$
I_1(t)=\int_0^t(f_s\prec P_{t-s}g_s)-P_{t-s}(f_s\prec g_s)\dd s \quad \mathrm{and} \quad I_2(t)=\int_0^t(f_t-f_s)\prec P_{t-s}g_s\dd s.
$$
Now using the heat-flow estimate we have that 
$$
\|I_1(t)\|_{\mathcal C^{\gamma+\beta+2\theta'}}\lesssim \int_0^t\|(f_s\prec P_{t-s}g_s)-P_{t-s}(f_s\prec g_s)\|_{\mathcal C^{\beta+\gamma+2\theta'}}\dd s\lesssim \left(\int_0^t(t-s)^{-\theta'}\dd s\right)\|f\|_{C_T^{\alpha,\beta}}\|g\|_{C_T^\gamma},
$$
for all $\theta'<1$. The second inequality is obtained by using the heat-flow estimates. Taking $2\theta=2\theta'+\beta$ gives the needed bound for $I_1$. The bound of $I_2$ is a consequence of the the Bony estimate for the paraproduct term, the H\"older regularity in time of $f$ and the heat flow estimate. Indeed we have that 
$$
\|I_2(t)\|_{\mathcal C^{\gamma+2\theta}}\lesssim\int_0^t\|(f_t-f_s)\|_{\mathcal C^{\beta}}\|P_{t-s}g_s\|_{\mathcal C^{\gamma+2\theta}}\dd s\lesssim T^{1-(\theta-\gamma)}\|f\|_{C_T^{\alpha,\beta}}\|g\|_{C_T^\gamma}. 
$$
Which ends the proof.
\end{proof}
\subsection{Description of the strategy and renormalized equation}
Let us focus on the mild formulation of the equation \eqref{eq:cauchy problem}
\begin{equation}\label{equation_mild}
u=\Psi+X+I(u^3)=X+\Phi, 
\end{equation}
where we recall the notation  $I(f)(t)=-\int_0^tP_{t-s}f_s\dd s$, $X=-I(\xi)$ and $\Psi_t=P_tu^0$ for $u^0\in\mathcal C^{-z}(\mathbb T^3)$. We can see that a solution $u$ must have the same regularity as $X$. However it is well-known that for all $\varepsilon>0$, we have $X\in C([0,T],\mathcal C^{-1/2-\varepsilon})$ (see  Section~\ref{section:renormalization} for a quick proof of this fact).  But in that case the nonlinear term $u^3$ is not well-defined, as there is no universal notion for the product of distributions.  
A first idea is to proceed by regularization of $X$, such that products of the regularized quantities are well-defined, and then try to pass to the limit. Let us recall that the stationary O.U process $(\hat X_t(k))_{t\in\mathbb R,k\in\mathbb Z^3}$ is a centered Gaussian process with covariance function given by 
$$
\mathbb E\left[\hat X_t(k)\hat X_s(k')\right]=\der_{k+k'=0}\frac{e^{-|k|^2|t-s|}}{|k|^2}
$$ 
and $\hat X_t(0)=0$. Let $X_t^\eps$ be the mollification of $X$ introduced in  Theorem~\ref{theorem:main-theo-2}. Then the following approximated equation
$$\Phi^\eps = \Psi^\eps + I((X^\eps)^3)+3I((\Phi^\eps)^2 X^\eps)+3I(\Phi^\eps (X^\eps)^2) + I((\Phi^\eps)^3),$$
where $\Phi^\eps=I((u^\eps)^3)+\Psi^\eps$, is well-posed. Before proceeding into the analysis of this equation let us observe that a straightforward computation gives
\begin{equation*}
\begin{split}
C_1^\eps:=\mathbb E\left[(X_t^\eps)^2\right]&=\sum_{k\in\mathbb Z^3-\{0\}}\sum_{k_1+k_2=k}f(\eps k_1)f(\eps k_2)\frac{1}{|k_1|^2}\der_{k_1+k_2=0}
\\&=\sum_{k\in\mathbb Z^3-\{0\}}\frac{|f(\eps k)|^2}{|k|^2}\sim_{0}\frac{1}{\eps}\int_\mathbb Rf(x)|x|^{-2}\dd x.
\end{split}
\end{equation*}
Here $A_{\eps}\sim_{0}B_{\eps}$ means that when $\eps$ is close to $0$,  the quantity $A_\eps$ can be be bounded from below and above by a positive constant times $B_\eps$. Then there is no hope of obtaining a finite limit for $(X^\eps)^2$ when $\eps$ goes to zero. This difficulty has to be solved by subtracting from the original equation these problematic contributions. In order to do so consistently, we will introduce a renormalized product. Formally we would like to define
$$
 X^{\diamond 2} =X^2-\mathbb E[X^2]
$$
and show that it is well-defined and that $ X^{\diamond 2} \in \mathcal C_T^{-1-\delta}$ for $\delta >0$. Precisely we will introduce 
\[(X^\varepsilon)^{\diamond 2}=(X^{\eps})^2-\underbrace{\expect[(X^{\eps})^2]}_{=:C^\eps_1}\]
and we will prove that it converges to some finite limit. It would be wise to remark that  many other terms need to be renormalized and subtracting the constant $C_1^\eps$ is not enough to take care of them. Indeed as we will see in  Section~\ref{section:renormalization}, a second renormalization constant $C_2^\eps$ is needed. Including such considerations in the approximated equation gives rise to an algebraic renormalization term which takes the form $-C^\eps I(\Phi^\eps+X^\eps)$ with $C^\eps=3(C_1^\eps-3C_2^\eps)$. More precisely we will study the following equation:     
\begin{align*}
\Phi^\eps &= \Psi^\eps + I((X^\eps)^3)+3I((\Phi^\eps)^2 X^\eps)+3I(\Phi^\eps (X^\eps)^2) + I((\Phi^\eps)^3) - C^\eps I(\Phi^\eps+X^\eps)\\
&=\Psi^\eps + I((X^\eps)^3-3C^\eps_1 X^\eps)+3I((\Phi^\eps)^2 X^\eps)+3I(\Phi^\eps ((X^\eps)^2-C^\eps_1)+9C^\eps_2(\Phi^\eps+X^\eps)) + I((\Phi^\eps)^3),
\end{align*}
or in an other form:  
\begin{equation}\label{eq:renormalized approximated}
\Phi^{\eps} =\Psi^\eps + I((X^{\eps})^{\diamond 3}) + 3I((\Phi^\eps)^2 X^\varepsilon)+3I(\Phi^\varepsilon\diamond(X^\varepsilon)^{\diamond 2}) + I((\Phi^\eps)^3).
\end{equation}
We have adopted the following notation:
$$
I(\Phi^\varepsilon\diamond(X^\varepsilon)^{\diamond 2}):=3I(\Phi^\eps ((X^\eps)^{\diamond2}))+9C_2^\eps I(\Phi^\eps+X^\eps)
$$
and
$$
 I((X^{\eps})^{\diamond 3})= I((X^\eps)^3-3C^\eps_1 X^\eps).
$$
A brief analysis of the wanted regularity for the involved objects shows that even if the terms $I\big((X^\eps)^{\diamond 2}\big)$ and $I\big((X^\eps)^{\diamond 3})$ converge in the suitable spaces, the renormalization introduced before is not enough to define the equation. Indeed, from Section~\ref{section:renormalization}, one can see that for all $\delta>0$, $X^\eps$ converges in probability (and even almost surely) in the space $\mathcal C_T^{-1/2-\delta}$, $(X^\eps)^{\diamond 2}$ converges in the space $\mathcal C_T^{-1-\delta}$ and the term $I\big((X^\eps)^{\diamond 3}\big)$ converges in the space $\mathcal C_T^{1/2-\delta}$. 
So we can expect that the presumed limit $\Phi$ of the solution $\Phi^\eps$ has the same regularity as the worst term in the last equation. Hence, $\Phi\in \mathcal C_T^{1/2-\delta}$ and the estimates of Proposition~\ref{proposition:Bony-estim} are not enough to take care of the terms $X^{\diamond 2}\Phi$ and $X\Phi^2$, since the sum of the regularities are still negative. Nevertheless, it is expected from Section~\ref{section:renormalization} that if those terms are constructed they lie respectively in $\mathcal C_T^{-1-\delta}$ and $\mathcal C_T^{-1/2-\delta}$. One expects from Section~\ref{section:renormalization} that the solution $\Phi^\eps$ may converge as soon as it is expressed as functional of "purely stochastic" terms. In order to have such a decomposition we will use extensively the definition of the paraproduct and the commutator estimates. Proposition~\ref{proposition:Bony-estim} allows us to deal with products of factors as soon as the sum of the spatial regularity is positive. The commutator and Schauder estimates (Propositions~\ref{proposition:comm-1} and~\ref{prop:schauder}) allows us to decompose the analytically ill-defined terms into purely stochastic factors and well-defined terms.
\bigskip

For the sake of  better comprehension, we will consider only the null initial condition. It does not change the algebraic part which is exposed here, but push us  to deal with space of continuous functions for strictly positive time which can blowup at the origin. The equation becomes
\begin{equation}\label{eq:screu}\Phi^\eps = \underbrace{I\big((X^\eps)^{\diamond 3} \big)}_{\mathcal C_T^{1/2 - \delta}} 
+ \underbrace{3I\big((X^\eps)^{\diamond 2}\Phi^\eps\big)}_{\mathcal C^{1-\delta}_T}
+\underbrace{9C_2^\eps I\big(X^\eps+\Phi^\eps\big)}_{\mathcal C^{3/2-\delta}_T}
+ \underbrace{3I\big(X^\eps (\Phi^\eps)^2\big)}_{\mathcal C_T^{3/2-\delta}} 
+ \underbrace{I\big((\Phi^\eps)^3\big)}_{\mathcal C_T^{3/2-\delta}}.\end{equation}
The form of the equation suggests to make the following ansatz about the a priori expression of the solution $\Phi^\eps$:
\begin{ansatz}\label{ansatz:young}
We suppose that there exists $(\Phi^\eps)^\flat$ such that $\Phi^\eps = I\big((X^\eps)^{\diamond 3}\big) + (\Phi^\eps)^{\flat}$ and where for all $\delta>0$ (small enough) the reminder $(\Phi^\eps)^\flat$ is uniformly bounded (in $\eps$) in the space $\mathcal C^{1-\delta}_T$.
\end{ansatz}
If $\Phi^\eps$ fulfills Ansatz~\ref{ansatz:young}, one can develop the fourth term of the right hand side and obtain
\[X^\eps (\Phi^\eps)^2 = 2 X^\eps (\Phi^\eps\prec\Phi^\eps) + X^\eps  (\Phi^\eps \circ \Phi^\eps). \]
As $\Phi^\eps \in \mathcal C_T^{1/2-\delta}$ and $X^\eps\in\mathcal C_T^{-1/2-\delta}$, the term $X^\eps  (\Phi^\eps \circ \Phi^\eps)$ is well-defined thanks to  Proposition~\ref{proposition:Bony-estim}. It is possible to develop the first one a bit further to have that 
\[X^\eps (\Phi^\eps\prec\Phi^\eps) = X^\eps \circ (\Phi^\eps\prec\Phi^\eps)  + \Big(X^\eps \prec(\Phi^\eps\prec\Phi^\eps)+ (\Phi^\eps\prec\Phi^\eps)\prec X^\eps)\Big). \]
Here again, the only ill-defined term may be the first one. Hopefully, the regularities of the objects allows us to use the commutator estimate of Proposition~\ref{proposition:comm-1}. We then use of Ansatz~\ref{ansatz:young} once again to get
\begin{align*}
X^\eps \circ (\Phi^\eps\prec\Phi^\eps) &= \Phi^\eps (\Phi^\eps \circ X^\eps) + R(X^\eps,\Phi^\eps,\Phi^\eps)\\
& = \Phi^\eps \Big(I\big(X^\eps\big)^3\circ X^\eps + (\Phi^\eps)^\flat\circ X^\eps\Big) + R(X^\eps,\Phi^\eps,\Phi^\eps).
\end{align*}
 Hence, Ansatz~\ref{ansatz:young} allows us to see the product $(\Phi^\eps)^2 X^\eps$ as a continuous functional of $(\Phi^\eps)^\flat$ and some stochastic well-defined terms. This fact is summarized in the following proposition:
 \begin{proposition}\label{proposition:young}
 Let $\Phi^\eps$ be as in Ansatz~\ref{ansatz:young}, then $I((\Phi^{\eps})^2 X^\eps)$ is a continuous functional (bounded uniformly in $\eps$) of $(\Phi^\eps)^\flat$, $X^\eps$, $I\big((X^\eps)^{\diamond 3}\big)$, $(X^\eps)^{\diamond 2}$ and $I\big((X^\eps)^{\diamond 3}\big)\circ X^\eps$. Moreover if all these data have a finite limit in the prescribed space then $I((\Phi^{\eps})^2 X^\eps)$ is also convergent. 
 \end{proposition}
The aim of Subsection~\ref{subsection2.4}, and in particular Proposition~\ref{prop:choiceL1} is to specify the dependencies towards the norm of each object. Furthermore, thanks to Section~\ref{section:renormalization}, the term $I((X^\eps)^{\diamond3})\circ X^\eps$ converges in probability in the suitable space.
\begin{notation}
Since the eventual limit of $(\Phi^\eps)^{2}X^\eps$ is not simply a continuous functional of $X$ (but also of the eventual limit of the stochastic terms appearing in Proposition~\ref{proposition:young}) it would be wise to denote the limiting object by $\Phi^2\diamond X$ instead of $\Phi^2X$ to keep this fact in mind. 
\end{notation}
\bigskip

Unfortunately, Ansatz~\ref{ansatz:young} is not enough to handle the product $ \Phi^\eps(X^\eps)^{\diamond 2}$. Indeed, $(X^\eps)^{\diamond 2} \in \mathcal C^{-1-\delta}_T$ and the remainder $(\Phi^\eps)^{\flat} \in \mathcal C^{1-\delta}_T$. Hence, one has to develop Equation~\eqref{eq:screu} a bit further. We still assume that $\Phi^\eps$ complies with Ansatz~\ref{ansatz:young}. From the paraproduct decomposition, we can see that  
\begin{equation}\label{eq:screugneu}
\begin{aligned}
\Phi^\eps 
= &\underbrace{I\big((X^\eps)^{\diamond 3} \big)}_{\mathcal C_T^{1/2 - \delta}} 
+ \underbrace{3I\big(\Phi^\eps\prec(X^\eps)^{\diamond 2}\big)}_{\mathcal C^{1-\delta}_T}\\ 
&+\underbrace{3I\big(\Phi^\eps\circ(X^\eps)^{\diamond 2}\big)}_{\mathcal C^{3/2-\delta}_T}- \underbrace{9C_2^\eps I(\Phi^\eps+X^\eps)}_{\mathcal C^{3/2-\delta}}\\
&+
\underbrace{3I\big((X^\eps)^{\diamond 2}\succ\Phi^\eps\big)}_{\mathcal C^{3/2-\delta}_T} 
+ \underbrace{3I\big(X^\eps (\Phi^\eps)^2\big)}_{\mathcal C_T^{3/2-\delta}}
 + \underbrace{I\big((\Phi^\eps)^3\big)}_{\mathcal C_T^{3/2-\delta}}.
 \end{aligned}
\end{equation}
Let us observe that the only "ill-defined" term in this expansion is $I\big(\Phi^\eps\circ(X^\eps)^{\diamond 2}\big)$. Nevertheless, one can make a second stronger ansatz about the representation of the solutions in terms of functions with increasing regularity:
\begin{ansatz}[Paracontrol Ansatz]\label{ansatz:paracontrol}
We suppose that there exists $(\Phi^\eps)'$ such that:
\[\Phi^\eps = I\big((X^\eps)^{\diamond 3}\big) + 3 I\big((\Phi^\eps)'\prec (X^\eps)^{\diamond 2}\big) + (\Phi^\eps)^\sharp,\]
where for all $\delta>0$ and all $\nu>0$ small enough $(\Phi^\eps)'\in\mathcal C_T^{1/2-\delta}$ and $(\Phi^\eps)^{\sharp}\in\mathcal C_T^{1+\nu}$ uniformly in $\eps$. Moreover as we will see in the sequel some H\"older regularity in time is also needed for the term $(\Phi^\eps)'$ and actually we will assume that this term is uniformly bounded (in $\eps$) in the space $C^{\delta',1/2-\delta}_{T}$ for $\delta'<\delta/2$.
\end{ansatz}
This ansatz is an informal form of the definition of the paracontrolled distributions (Definition~\ref{def:controlled-distrib}). It will allow us to prove an analog of Proposition~\ref{proposition:young} but for $I\big(\Phi^\eps\circ(X^\eps)^{\diamond 2}\big)$. 
First, remind that $I(f) = -\int_0^t P_{t-s}f_s \dd s$. Thanks to the commutator estimate of Proposition~\ref{proposition:comm-1}, the H\"older regularity in time of $(\Phi^\eps)'$ and the Schauder estimate of Proposition~\ref{prop:schauder}, we have that for all $\nu>0$ small enough: 
\[I\big((\Phi^\eps)'\prec (X^\eps)^{\diamond 2}\big)-(\Phi^\eps)'\prec I(X^\eps)^{\diamond 2} \in\cC_T^{1+\nu}.\]
Moreover this quantity is continuous with respect to $(\Phi^\eps)'$ and $(X^\eps)^{\diamond 2}$.
Hence, one can reformulate Ansatz~\ref{ansatz:paracontrol} in the following way: For all $\delta,\nu>0$ small enough, there exists $(\Phi^\eps)' \in\cC_T^{1/2-\delta}$ and $(\Phi^\eps)^\natural$ such that 
\begin{equation}\label{eq:Ansatzrefo}
\Phi^\eps = I\big((X^\eps)^{\diamond 3}\big) + 3 (\Phi^\eps)'\prec I\big((X^{\eps})^{\diamond 2}\big) +  (\Phi^\eps)^\natural.
\end{equation}

Again, the only ill-defined term in $\Phi^\eps(X^\eps)^{\diamond 2}$ is the resonant term $\Phi^\eps\circ(X^\eps)^{\diamond 2}$. Using the reformulation~\eqref{eq:Ansatzrefo} of  Ansatz~\ref{ansatz:paracontrol} we get easily that 
\begin{equation*}
 \begin{aligned}
 (\Phi^\eps)\circ(X^\eps)^{\diamond 2} 
 =& I\big((X^\eps)^{\diamond 3}\big)\circ (X^\eps)^{\diamond 2} + 3 (X^\eps)^{\diamond 2}\circ \Big((\Phi^\eps)'\prec I\big((X^\eps)^{\diamond 2}\big)\Big)+(\Phi^\eps)^\natural\circ (X^\eps)^{\diamond 2}\\
 =& I\big((X^\eps)^{\diamond 3}\big)\circ (X^\eps)^{\diamond 2} + (\Phi^\eps)' \Big(I\big((X^\eps)^{\diamond 2}\big)\circ (X^\eps)^{\diamond 2}\Big)\\
 &+R\Big((X^\eps)^{\diamond 2},(\Phi^\eps)',I\big((X^\eps)^{\diamond 2})\Big)+(\Phi^\eps)^\natural\circ (X^\eps)^{\diamond 2},\\
 \end{aligned}
 \end{equation*}
where we have used the commutator estimate of Proposition~\ref{proposition:comm-1}. All the regularities of the involved objects are enough to take the limit in the product, as soon as $I\big((X^\eps)^{\diamond 2}\big)\circ (X^\eps)^{\diamond 2}$ and $I\big((X^\eps)^{\diamond 3}\big)\circ (X^\eps)^{\diamond 2}$ converge in the prescribed space. Unfortunately as it is shown in  Section~\ref{section:renormalization}, this is not true. However the convergence holds after making a renormalization procedure and this is where the constant $C_2^\eps$ takes its role. More precisely we will consider the following stochastic terms:
$$
\left(I\big((X^\eps)^{\diamond 2}\big)\circ (X^\eps)^{\diamond 2}\right)^{\diamond}=I\big((X^\eps)^{\diamond 2}\big)\circ (X^\eps)^{\diamond 2}-C_2^\eps
$$ 
and
$$
\left(I\big((X^\eps)^{\diamond 3}\big)\circ (X^\eps)^{\diamond 2}\right)^{\diamond}=I\big((X^\eps)^{\diamond 3}\big)\circ (X^\eps)^{\diamond 2}-3C_2^\eps X^\eps,
$$
with 
$$
C_{2}^\eps=\mathbb E\left[I\big((X^\eps)^{\diamond 2}\big)(t)\circ (X^\eps)^{\diamond 2}(t)\right]\big|_{t=0}.
$$
Making such a consideration push us to consider the term $ (\Phi^\eps)\circ(X^\eps)^{\diamond 2} -3C^\eps_2 (X^\eps + \Phi^\eps)$ instead of the original one where we have added the extra counterpart introduced in Equation~\eqref{eq:screu} and, at this point, we can see that this term has the following expansion: 
\begin{equation*}
\begin{aligned}
 (\Phi^\eps)\circ(X^\eps)^{\diamond 2} -3C^\eps_2 (X^\eps + \Phi^\eps)=
    & I\big((X^\eps)^{\diamond 3}\big)\circ (X^\eps)^{\diamond 2} - 3C^\eps_2 X^\eps \\
&     + 3\bigg((\Phi^\eps)' \Big(I\big((X^\eps)^{\diamond 2}\big)\circ (X^\eps)^{\diamond 2}\Big) - C_2^\eps \Phi^\eps\bigg)\\
&+R\Big((X^\eps)^{\diamond 2},(\Phi^\eps)',I\big((X^\eps)^{\diamond 2})\Big)+(\Phi^\eps)^\natural\circ (X^\eps)^{\diamond 2}.
 \end{aligned}
 \end{equation*}
It is important to remember that $\Phi^\eps$ is expected to be a fixed point of the equation. In that setting, one must note that $\Phi^\eps = (\Phi^\eps)'$. The following proposition is a summary of the discussion above (a rigorous proof of it, and more precise estimates can be found in Subsection~\ref{subsection2.5}). 
\begin{proposition}
Suppose that $\Phi^\eps$ fulfills Ansatz~\ref{ansatz:paracontrol}, then $\Phi^\eps (X^\eps)^{\diamond 2}-3C^\eps_2 (X^\eps+\Phi^\eps)$ is a continuous functional, uniformly in $\eps$, of $\Phi^\eps$, $(\Phi^\eps)'$, $X^\eps$, $(X^\eps)^{\diamond 2}$, $I\big((X^\eps)^{\diamond 3}\big)$, $\left(I\big((X^\eps)^{\diamond 2}\big) \circ (X^\eps)^{\diamond 2}\right)^{\diamond}$ and $\left(I\big((X^\eps)^{\diamond 3}\big) \circ (X^\eps)^{\diamond 2}\right)^{\diamond}$.
 \end{proposition}
The following corollary is a byproduct of those two propositions:
\begin{corollary}
\label{corollary:contin}
 Let $\Phi^\eps$ be as in Ansatz~\ref{ansatz:paracontrol} and 
 \[
  \XX^{\eps} = \Big(
  X^\eps,(X^\eps)^{\diamond 2}, I\big((X^\eps)^{\diamond 3}),(I\big((X^\eps)^{\diamond 3})\circ X^\eps,
   I\big((X^\eps)^{\diamond 2})\circ(X^\eps)^{\diamond 2})^{\diamond},\left(I\big((X^\eps)^{\diamond 3})\circ(X^\eps)^{\diamond 2}
  \right)^{\diamond}\Big).\]
  The function $\Gamma$ is continuous toward $\Phi^\eps$, $(\Phi^\eps)'$ and $\XX^\eps$, uniformly in $\eps$, where 
\[  \Gamma(\Phi^\eps) = I\big((X^\eps)^3\big) + 3I\big((X^\eps)^{2}\Phi^\eps\big) + I\big(X^\eps (\Phi^\eps)^2\big) + I\big((\Phi^\eps)^3\big) - 3 (C^\eps_1+C^\eps_2+C_3) I\big(X^\eps + \Phi^\eps).\]
 \end{corollary}
 As mentioned above, the aim of Subsections~\ref{subsection2.4} and~\ref{subsection2.5} is to specify the dependency towards the parameters of the problem. In Section~\ref{section:fixed point}, these estimates will allow us to prove that in a suitable space (the space of paracontrolled distribution of Definition~\ref{def:controlled-distrib}) and for a suitable $\XX$ (lying in the space of rough distributions of Definition~\ref{hyp:renormalization1}) the application $\Gamma$ is a contraction. This will allow us to make a fixed point argument.
 Finally, in Section~\ref{section:renormalization} we apply this analytical theory to the white noise and to $X$. 

\begin{remark}
 Let us remark that this analysis for $\Phi^\eps$ leads to the corresponding problem for $u^\eps$:
 \[\partial_t u^\eps = \Delta u^\eps - \big((u^\eps)^3 - 3(C^\eps_1+3C^\eps_2+C_3) u^\eps\big) + \xi^\eps.\]
 In all the following, we have implicitly chosen to take $C_3=0$. It does not change the resolution of the problem to take $C_3\neq 0$.
 \end{remark}

To end this section let us introduce the space $\mathcal X$ in which the convergence of $\mathbb X^{\eps}$ takes place.  
\begin{definition}\label{hyp:renormalization1}
Let $T>1$, $\nu,\rho>0$. We denote by $\cC_T^{\nu,\delta',\beta}$ the closure of the set of smooth functions $C^{\infty}([0,T],\mathcal C^\beta(\mathbb T^3))$ by the semi-norm:
$$
\|\varphi\|_{\nu,\rho}=\sup_{t\in[0,T]}t^{\nu}|\varphi_t|_{\mathcal C^\beta}+\sup_{t,s\in[0,T];s\ne t}\frac{s^{\nu}|\varphi_t-\varphi_s|_{\mathcal C^\beta}}{|t-s|^{\delta'}}.
$$ 
For $0<4\delta'<\delta$ we define the normed Banach space $\mathcal W_{T,K}$ 
$$
\mathcal W_{T,K}=C_T^{\delta',-1/2-\delta}\times C_{T}^{\delta',-1-\delta}\times C_{T}^{\delta',1/2-\delta}\times C_T^{\delta',-\delta}\times\mathscr C_{T}^{\nu,\delta',-\delta}\times \mathscr C_T^{\nu,\delta',-1/2-\delta},
$$
where $K=(\delta,\delta',\nu,\rho)$. This Banach space is equipped with the product topology. For $X\in C([0,T],C(\mathbb T^3))$, and $(a,b)\in\mathbb R^2$ we define $R_{a,b}X\in\mathcal W_{T,K}$ by
\begin{equation*}
\begin{split}
R_{a,b}X=&(X,X^2-a,I(X^3-3aX),I(X^3-3aX)\circ X,\\&I(X^2-a)\circ(X^2-a)-b,I(X^3-3aX)\circ(X^2-a)-3bX).
\end{split}
\end{equation*}
The space of  rough distributions $\mathcal X_{T,K}$ is defined as the closure in $\mathcal W_{T,K}$ of the set 
$$
\left\{R_{a,b}X,\quad X\in C([0,T],C(\mathbb T^3));(a,b)\in\mathbb R^2\right\}.
$$  
\smallskip
For a generic element $\mathbb X\in\mathcal X$ we denote its components by 
$$
\mathbb X=(X,X^{\diamond2},I(X^{\diamond3}),I(X^{\diamond3})\circ X,(I(X^{\diamond2})\circ X^{\diamond2})^{\diamond},(I(X^{\diamond3})\circ X^{\diamond2})^{\diamond}).
$$ 
We equip the space $\mathcal X_{T,K}$ by the metric $\mathrm d$ induced by the topology of the Banach space $\mathcal W_{T,K}$ and we denote simply by $\|\mathbb X\|_{T,K}$ the norm of $\mathbb X$ in the space $\mathcal W_{T,K}$. For simplicity we will omit in the sequel the dependency in $T$ and $K$ for the space defined above and simply write $\mathcal X$. 
\end{definition}
\begin{remark}
For $\mathbb X\in\mathcal X_{T,K}$ we can obviously construct $\left (I(X^{\diamond2})X^{\diamond2}\right)^{\diamond}$ using the Bony paraproduct decomposition in the following way 
$$
\big(I(X^{\diamond2})X^{\diamond2})^{\diamond}=I(X^{\diamond2})\prec X^{\diamond2}+I(X^{\diamond2})\succ X^{\diamond2}+(I(X^{\diamond2})\circ X^{\diamond2})^{\diamond}.
$$
This could also be done for $\left (I(X^{\diamond3})X^{\diamond2}\right)^{\diamond}$. In the sequel we might abusively denote $\mathbb X$ by $X$ if there is no confusion, and as in the rough paths terminology we denote the other components of $\mathbb X$ by the area components of $\mathbb X$.
\end{remark}
Now let us summarize the discussion and give some pointers for the next sections. Firstly in  Section~\ref{section:para} we will introduce the space of paracontrolled distributions which is formally speaking the space of distributions (actually a couple of distributions) such that Ansatz~\ref{ansatz:paracontrol} holds. In a second step, in  Section~\ref{subsection2.4} (respectively~\ref{subsection2.5}), we will show that given a fixed rough distribution $\mathbb X$  and a fixed paracontrolled distribution $\Phi$ we can construct the product $\Phi^2\diamond X^{\diamond}$ (respectively $\Phi\diamond X^{\diamond2}$) like a continuous functional of the paracontrolled distribution $\Phi$ and the rough distribution $\mathbb X$. Moreover this construction will coincide with the "classical"(classical mean that the products appearing in this expression are understood in the usual sense of pointwise products of functions) definitions when all data are smooth. Finally in  Section~\ref{section:fixed point} we will show that for a small time the map $\Gamma$ is a contraction from the space of paracontrolled distributions to itself, which will allow us to construct immediately the map $\tilde F$ appearing in  Theorem~\ref{Th:Flow-const}. It is wise to remark that all these parts are purely analytic and use simply the fact that $\mathbb X$ is a rough distribution. In order to come back to the original problem, we will prove in Section~\ref{section:renormalization} that if $X^\eps$ is a regularization of the (O.U) then $R_{C_1^\eps,C_2^\eps}X^\eps=\mathbb X^\eps$ converges in the space $\mathcal X_{T,K}$.

\subsection{Paracontrolled distributions and fixed point equation}
\label{section:para}
 The aim of this section is to define a suitable space in which it is possible to formulate a fixed point for the eventual limit of the mollified solution. To be more precise, let $\mathbb X$ be a generic element of the space $\mathcal X$ (not necessarily equal to a fixed trajectory of the O.U.). We know that there exists $X^\eps\in\mathcal C_T^1(\mathbb T^3)$ and $a^\eps,b^\eps\in\mathbb R$ such that $\lim_{\eps\to0}R_{a^\eps,b^\eps}X^\eps=\mathbb X$. Let us focus on the regular equation given by:
\begin{equation*}
\begin{split}
\Phi^\eps&=I((X^\eps)^3-3a^\eps X^\eps)+3\left\{I(\Phi^\eps((X^\eps)^2-a^\eps))-3b^\eps I(X^\eps+\Phi^\eps)\right\}+3I((\Phi^\eps)^2 X^\eps)+I((\Phi^\eps)^3),
\end{split}
\end{equation*}
where we have omitted temporarily the dependence on the initial condition.As pointed previously if we assume simply that $\Phi^\eps$ converges to some $\Phi$ in $\mathcal C^{1/2-\delta}$, we see that the regularity of $\mathbb X$ is not sufficient to define $I(\Phi^2\diamond X):=\lim_{\eps\to0}I((\Phi^\eps)^2 X^{\eps})$ and 
$
I(\Phi\diamond X^{\diamond2}):=\lim_{\eps\to0}I(\Phi^\eps((X^\eps)^2-a^\eps))+3b^\eps I(X^\eps+\Phi^\eps).
$
As it has been remarked in the previous section the solution should satisfy the following decomposition:
$$
\Phi^\eps=I((X^\eps)^3-3a^\eps X^\eps)+3I(\Phi^\eps\prec((X^\eps)^2-a^\eps))+(\Phi^\eps)^\sharp.
$$
Then if we impose the convergence of $(\Phi^\eps)^\sharp$ to some $\Phi^\sharp$ in $\mathcal C_T^{3/2-\delta}$, we see that the limit $\Phi$ should satisfy the following relation 
$$
\Phi^{\sharp}:=\Phi-I(X^{\diamond3})-3I(\Phi\prec X^{\diamond2})\in \mathcal C_T^{3/2-\delta},
$$    
which as pointed in the previous section is the key point to define $I(\Phi^2 X)$, $I(\Phi\diamond X^{\diamond2})$ and to solve the equation
\begin{equation}\label{eq:abstract-renormalized-eq} 
\Phi=I(X^{\diamond3})+3I(\Phi^2\diamond X)+3I(\Phi\diamond X^{\diamond2})+I(\Phi^3).
\end{equation}
\begin{notation}
Let us introduce  some useful notations for the sequel
 $$
 B_{>}(f,g)=I(f\succ g),\quad B_{0}(f,g)=I(f\circ g) \quad\mathrm{and}\quad  B_{<}(f,g)=I(f\prec g).
 $$   
\end{notation}
\begin{remark}
The reader should keep in mind that the paraproduct $B_{<}(f,g)$ is always well-defined for every $f\in\mathcal C^\alpha$ and $g\in\mathcal C^\beta$ for all the value of $\alpha$ and $\beta$ moreover it has regularity $\min(\alpha,\beta)+2-\delta$, for all $\delta>0$.
\end{remark}
Now the following definition gives a precise meaning to the notion of paracontrolled distribution. 
\begin{definition}\label{def:controlled-distrib}
Let $\mathbb X\in\mathcal X$ and $z\in(1/2,2/3)$. We say that a couple  $(\Phi,\Phi')\in (C_T^{-z})^2$ is controlled by $\mathbb X$ if 
$$
\Phi^{\sharp} =\Phi - I(X^{\diamond3}) - 3B_{<}(\Phi' ,X^{\diamond2})
$$
is such that 
\begin{equation*}
\begin{split}
\|\Phi^{\sharp}\|_{\star,1,L,T}=&\sup_{t\in[0,T]}\left(t^{\frac{1+\delta+z}{2}}\|\Phi^\sharp_t\|_{1+\delta}+t^{1/4+\frac{\gamma+z}{2}}\|\Phi^\sharp_t\|_{1/2+\gamma}+t^{\frac{\kappa+z}{2}}\|\Phi^{\sharp}_t\|_{\kappa}\right)
\\&+\sup_{(s,t)\in[0,T]^2}s^{\frac{z+a}{2}}\frac{\|\Phi^\sharp_t-\Phi^{\sharp}_s\|_{a-2b}}{|t-s|^{b}}<+\infty
\end{split}
\end{equation*}
and 
$$
\|\Phi'\|_{\star,2,L,T}=\sup_{(s,t)\in[0,T]^2}s^{\frac{z+c}{2}}\frac{\|\Phi'_t-\Phi'_s\|_{c-2d}}{|t-s|^{d}}+\sup_{t\in[0,T]}t^{\frac{\eta+z}{2}}\|\Phi'_t\|_{\eta}<+\infty,
$$
where $L:=(\delta,\gamma,\kappa,a,b,c,d,\eta)\in[0,1]^{8}$, $z\in(1/2,2/3)$ and $2d\leq c$, $2b\leq a$. Let us denote by $\mathcal D^L_{T,\mathbb X}$ the space of such couples of distributions. For simplicity, in the sequel of the paper, we will sometimes use the abusive notation $\Phi$ instead of $(\Phi,\Phi')$. Moreover we equip this space with the following metric:
$$
d_{L,T}(\Phi_1,\Phi_2)=\|\Phi'_1-\Phi'_2\|_{\star,2,L,T}+\|\Phi^\sharp_1-\Phi^\sharp_2\|_{\star,1,L,T}
$$
for $\Phi_1,\Phi_2\in\mathcal D^{L}_{T,\mathbb X}$ and the quantity 
$$
\|\Phi\|_{\star,T,L}=\|\Phi_1\|_{\mathcal D_{T,X}^L}=d_{L,T}(\Phi_1,I(X^{\diamond3})).
$$
\end{definition}
\begin{remark}
The metric space $\mathcal D^L_{T,\mathbb X}$ is complete.
\end{remark}
In the following we will omit $L$ when its choice is clear.
We notice that the distance and the metric introduced in this last definition do not depend on $\mathbb X$. More generally for $\Phi\in\mathcal D_{T_1,X}^L$ and $\Psi\in\mathcal D_{T_2,Y}^{G}$ we denote by $d_{\min(L,G),\min(T_1,T_2)}(\Phi,\Psi)$ the same quantity. We claim that if $\Phi\in\mathcal D_X^{L}$ for a suitable choice of $L$ then we are able to define $I(\Phi\diamond X^{\diamond2})$ and $I(\Phi^2\diamond X)$ modulo the use of $\mathbb X$. 

Let us decompose the end of this Section into two parts, namely we show that $I(\Phi \diamond X^{\diamond2})$ and $I(\Phi^2  X)$ are well-defined when $\Phi$ is a controlled distribution. We also have to prove that when $\Phi$ is a controlled distribution, $\Psi + I(X^{\diamond 3}) + 3I(\Phi^2\diamond X) + 3I(\Phi\diamond X^{\diamond 2}) + I(\Phi^3)$ is also a controlled distribution. After all those verifications, the only remaining point will be to show that we can apply a fixed point argument to find a solution to the renormalized equation. This is the aim of Section~\ref{section:fixed point}.

\subsection{Decomposition of \texorpdfstring{$I(\Phi^2 \diamond X)$}{I(Phi2 X)}}
\label{subsection2.4}
Let $\mathbb X\in\mathcal X$ and $\Phi\in\mathcal D_{\mathbb X,T}^L$, assuming that all the component of $\mathbb X$ are smooth and using the fact that $\Phi$ is controlled  we get immediately the following expansion:
\begin{equation*}
\begin{split}
I(\Phi^2 X)=I(I(X^{\diamond 3} )^2 X)+I((\theta^{\sharp})^2X)+2I(\theta^{\sharp}I(X^{\diamond 3})X),
\end{split}
\end{equation*}
where 
$$
\theta^{\sharp}=B_{<}(\Phi',X^{\diamond 2})+\Phi^{\sharp}.
$$
 When $\mathbb X$ is no longer smooth and only satisfies  $\mathbb X\in\mathcal X$,  more particularly when $I(X^{\diamond3})\in C_T^{1/2-\delta}$ and $X\in C_{T}^{-1/2-\delta} $,  we can observe that the two terms $I((\theta^\sharp)^2X)$ and $I(\theta^\sharp I(X^{\diamond3})X)$ are well-defined due to the Bony estimates (Proposition~\ref{proposition:Bony-estim}) and the fact that $\Phi$ is a paracontrolled distribution. Let us focus on the term $I(X^{\diamond3})^2X$ which, at this stage, is not well understood. However the Bony paraproduct  decomposition gives: 
\begin{equation*}
\begin{split}
I(X^{\diamond3})^2X&=(I( X^{\diamond 3} )\prec I( X^{\diamond 3} ))\circ X+(I(X^{\diamond3})\circ I(X^{\diamond3}))\circ X
\\&+I(X^{\diamond3})^2\prec X+I(X^{\diamond3})^2\succ X.
\end{split}
\end{equation*}
We remark that only the first term of this expansion is not well-defined. To overcome this problem we use the commutator of  Proposition~\ref{proposition:comm-1}, and we have 
$$
R(I(X^{\diamond3}),I(X^{\diamond3}),X)=(I( X^{\diamond 3} )\prec I( X^{\diamond 3} ))\circ X-I( X^{\diamond 3} )(I( X^{\diamond 3} )\circ X),
$$ 
which is well-defined and  lies in the space $\mathcal C_T^{1/2-3\delta}$,  due to the fact that $\mathbb X\in\mathcal X$.
\begin{remark}
We remark that the "extension" of the term $I(\Phi^2 X)$ is a functional of  $(\Phi,\mathbb X)\in\mathcal D_{\mathbb X,T}^L\times\mathcal X$ and then we  sometimes use the notation $I(\Phi^2\diamond\mathbb X)[\Phi,\mathbb X]$ to underline this fact.
\end{remark}
\begin{proposition}\label{proposition:int-estim-1}\label{prop:choiceL1}
Let $z\in(1/2,2/3)$, $\Phi\in \mathcal D^L_\mathbb X$, and assume that $\mathbb X\in\mathcal X$. The quantity $I(\Phi^2\diamond X)[\Phi,\mathbb X]$ is well-defined via the following expansion 
$$
I(\Phi^2\diamond X)[\Phi,\mathbb X]:=I(I(X^{\diamond3})^2  X)+I((\theta^{\sharp})^2X)+2I(\theta^{\sharp}I(X^{\diamond 3})X),
$$
where 
$$
\theta^{\sharp}=B_{<}(\Phi',X^{\diamond 2})+\Phi^{\sharp}
$$
and 
\begin{equation}
\begin{split}
I( X^{\diamond 3})^2  X&:=I( X^{\diamond 3} )\circ I( X^{\diamond 3})X+2(I( X^{\diamond 3} )\prec I( X^{\diamond 3} ))\prec X
+2(I( X^{\diamond 3} )\prec I( X^{\diamond 3} ))\succ X
\\&+2I( X^{\diamond 3} )I( X^{\diamond 3} )\circ X+2R(I( X^{\diamond 3} ),I( X^{\diamond 3} ),X),
\end{split}
\end{equation}
where 
$$
R(I(X^{\diamond 3} ),I(X^{\diamond 3}),X)=(I( X^{\diamond 3} )\prec I( X^{\diamond 3} ))\circ X-I( X^{\diamond 3} )(I( X^{\diamond 3} )\circ X )
$$
is well-defined by Proposition~\ref{proposition:comm-1}. Moreover there exists a choice of $L$  such that the following bound holds:
$$
\|I(\Phi^2\diamond X)[\Phi,\mathbb X]\|_{\star,1,T}\lesssim T^{\theta}\left(\|\Phi\|_{\mathcal D_X^{L}}+1\right)^2\left(1+\|\mathbb X\|_{T,\nu,\rho,\delta,\delta'}\right)^3,
$$
for $\theta>0$ and $\delta,\delta',\rho,\nu>0$ small enough depending on $L$ and $z$. Moreover if $X\in\mathcal C_T^1(\mathbb T^3)$ then
$$
I(\Phi^2\diamond X)[\Phi,R_{a,b}X]=I(\Phi^2 X).
$$ 
\end{proposition}

\begin{proof}
By a simple computation, we have
\begin{eqnarray*}
\|B_{<}(\Phi', X^{\diamond 2} )(t)\|_{\kappa}&\lesssim&\int_{0}^{t}\dd s(t-s)^{-(\kappa+1+r)/2}\|\Phi_s'\|_{\kappa}\| X^{\diamond 2}_s \|_{-1-r}\\
&\lesssim_{r,\kappa}&T^{1/2-r/2-\kappa/2-z/2}\|\Phi'\|_{\star,2,T}\| X^{\diamond 2} \|_{-1-r},
\end{eqnarray*}
for $r,\kappa>0$  small enough and  $1/2<z<2/3$. A similar computation gives 
\begin{equation*}
\begin{split}
\|B_{<}(\Phi', X^{\diamond 2} )(t)\|_{1/2+\gamma}&\lesssim\int_0^t\dd s(t-s)^{-(3/2+\gamma+r)/2}\|\Phi'_s\|_{\kappa}\| X^{\diamond 2}_s \|_{-1-r}
\\&\lesssim_{\kappa,r,z}\|\Phi'\|_{\star,2,T}\| X^{\diamond 2} \|_{-1-r}\int_{0}^{t}\dd s(t-s)^{-(3/2+\gamma+r)/2}s^{-(\kappa+z)/2}
\\&\lesssim t^{1/4-(\gamma+\kappa+z+r)/2}\|\Phi'\|_{\star,2,L,T}\| X^{\diamond 2} \|_{-1-r},
\end{split}
\end{equation*}
for $\gamma,r,\kappa>0$ small enough. Using this bound we can deduce that 
\begin{equation*}
\begin{split}
\|I((\theta^{\sharp})^2X)(t)\|_{1+\delta}&\lesssim\int_0^t\dd s(t-s)^{-(3/2+\delta+\beta)/2}\|(\theta^{\sharp}_s)^2X_{s}\|_{-1/2-\beta}
\\&\lesssim_{\beta,\delta}\int_{0}^{t}\dd s(t-s)^{-(3/2+\delta+\beta)/2}\|\theta^{\sharp}_s\|_{\kappa}\|\theta^{\sharp}_s\|_{1/2+\gamma}\|X_s\|_{-1/2-\beta}
\\&\lesssim_{L,z}\|\Phi\|^2_{\star,L,T}(\| X^{\diamond 2} \|_{-1-r}+\|X\|_{-1/2-\beta}+1)^2
\\&\times\int_0^t\dd s(t-s)^{-(3/2+\delta+\beta)}s^{-(1/2+\kappa+\gamma+2z)/2}
\\&\lesssim_{L,z}t^{-(\delta+\kappa+\gamma+\beta+2z)}\|\Phi\|^2_{\star,L,T}(\| X^{\diamond 2} \|_{-1-r}+\|X\|_{-1/2-\beta}+1)^2,
\end{split}
\end{equation*}
for $\gamma,\beta,\delta>0$ small enough and $2/3>z>1/2$. Hence we have
$$
\sup_{t\in[0,T]}t^{(1+\delta+z)/2}\|I((\theta^{\sharp})^2X)(t)\|_{1+\delta}\lesssim_{L}T^{\theta_1}\|\Phi\|^2_{\star,L,T}(\| X^{\diamond 2} \|_{-1-r}+\|X\|_{-1/2-\beta}+1)^2,
$$
for some $\theta_1>0$ depending on $L$ and $z$. The same type of computation gives 
$$
\sup_{t\in[0,T]}t^{(\kappa+z)/2}\|\|I((\theta^{\sharp})^2X)(t)\|_{\kappa}\lesssim_{L,z}T^{\theta_2}\|\Phi\|^2_{\star,L,T}(\| X^{\diamond 2} \|_{-1-r}+\|X\|_{-1/2-\beta}+1)^2
$$
and  
$$
\sup_{t\in[0,T]}t^{(1/2+\gamma+z)/2}I((\theta^{\sharp})^2X)(t)\|_{1/2+\gamma}\lesssim_{L,z}T^{\theta_3}\|\Phi\|^2_{\star,T}(\| X^{\diamond 2} \|_{-1-r}+\|X\|_{-1/2-\beta}+1)^2,$$
where $\theta_2$ and $\theta_3$ are two non negative constants depending only on $L$ and $z$. To complete our study for this term, we have also
$$
\|((\theta^{\sharp})^2X)(t)-I((\theta^{\sharp})^2X)(s)\|_{a-2b}\lesssim I^1_{st}+I^2_{st},
$$
where 
$$
I^{1}_{st}=\left|\left|\int_{0}^{s}\dd u(P_{t-u}-P_{s-u})(\theta^{\sharp}_u)^2X_u\right|\right|_{a-2b}\quad \mathrm{and} \quad I^2_{st}=\left|\left|\int_s^t\dd uP_{t-u}(\theta^{\sharp}_u)^2X_u\right|\right|_{a-2b}.
$$
Let us begin by bounding $I^1$:
\begin{equation*}
\begin{split}
I_{st}^{1}&\lesssim(t-s)^b\int_0^s\dd u\|P_{s-u}(\theta^{\sharp}_u)^2X_u\|_{a}
\\&\lesssim(t-s)^b\int_0^t\dd u(s-u)^{-(1/2+a+\beta)}\|(\theta^{\sharp}_u)^2X_u\|_{-1/2-\beta}
\\&\lesssim T^{\theta_4}|t-s|^b\|\Phi\|^2_{\star,T}(\| X^{\diamond 2} \|_{-1-r}+\|X\|_{-1/2-\beta}+1)^2,
\end{split}
\end{equation*}
 where $\theta_4>0$ depends on $L$ and $z$. Let us focus on the bound for $I^2$,
 \begin{equation*}
 \begin{split}
 I^2_{st}&\lesssim\int_{s}^{t}(t-u)^{-(1/2+a-2b+\beta)/2}\|(\theta^{\sharp}_u)^2X_u\|_{-1/2-\beta}
\\&\lesssim_{L,z}\|\Phi\|^2_{\star,T}(\| X^{\diamond 2} \|_{-1-r}+\|X\|_{-1/2-\beta}+1)^2\int_{s}^{t}\dd u(t-u)^{-(1/2+a-2b+\beta)/2}u^{-(1/2+\kappa+\gamma+2z)/2}
 \end{split} 
 \end{equation*}
and 
\begin{equation*}
\begin{split}
&\int_{s}^{t}\dd u(t-u)^{-(1/2+a-2b+\beta)/2}u^{-(1/2+\kappa+\gamma+2z)/2}
\\&=(t-s)^{3/4-(a-2b+\beta)}\int_{0}^{1}\dd x(1-x)^{-(1/2+a-2b+\beta)}(s+x(t-s))^{-(1/2+\kappa+\gamma+2z)/2}
\\&\lesssim_{l,\kappa,\gamma,a,b} (t-s)^{l-(a-2b+\beta)/2}s^{1/2-z+(\kappa+\gamma)/2}\int_0^1\dd x(1-x)^{-(1/2+a-2b+\beta)/2}x^{-3/4+l}.
\end{split}
\end{equation*}
Since $z<1$, we can choose $l,\kappa,\gamma,b>0$ small enough and we have
$$
\int_{s}^{t}\dd u(t-u)^{-(1/2+a-2b+\beta)/2}u^{-(1/2+\kappa+\gamma+2z)/2}\lesssim_{L}T^{\theta_5}(t-s)^bs^{-(z+a)/2},
$$
where $\theta_5>0$. This gives the needed bound for $I_2$. Finally, we have
$$
\sup_{(s,t)\in[0,T]}s^{(z+a)/2}\frac{\|I((\theta^{\sharp})^2X)(t)-I((\theta^{\sharp})^2X)(s)\|_{a-2b}}{|t-s|^b}\lesssim T^{\theta_5}\|\Phi\|^2_{\star,T}(\| X^{\diamond 2} \|_{-1-r}+\|X\|_{-1/2-\beta}+1)^2.
$$ 
Hence
$$
\|I((\theta^{\sharp})^2X)\|_{\star,1,T}\lesssim_{L} T^{\theta}\|\Phi\|^2_{\star,T}(\| X^{\diamond 2} \|_{-1-r}+\|X\|_{-1/2-\beta}+1)^2.
$$ 
The bound  for $\|I(\theta^{\sharp}I(X^3)X)\|_{\star,1,T}$ can be obtained in a similar way and then, according to the hypothesis given on the area term  $I(X^{\diamond 3}) X$ and  the decomposition of $I(I(X^{\diamond 3})^2 X)$, we obtain easily from  Proposition~\ref{proposition:comm-1} and  Proposition~\ref{proposition:Bony-estim} that 
$$
\| I(I(X^{\diamond 3}))^2\diamond X \|_{\star,1,T}\lesssim T^{\theta}(1+\|I(X^{\diamond 3})\circ X)\|_{\delta',-1/2-\rho}+\| I(X^{\diamond 3})\|_{\delta',1/2-\rho}+\|X\|_{\delta',-1/2-\rho})^3,
$$  
for $3\rho<\delta'$ small enough, which gives the wanted result.
\end{proof}

\subsection{Decomposition of \texorpdfstring{$I(\Phi \diamond X^{\diamond 2} )$}{I(Phi * X2)}}
\label{subsection2.5}
First, let $X\in C([0,T],C^{\infty})$, $(a,b)\in\mathbb R^2$, $\mathbb X=R_{a,b}X$ and $\Phi\in\mathcal D_{R_{a,b}X,T}$.  Using the paracontrolled structure of $\Phi$ and the Bony paraproduct decomposition for the term $\Phi(X^2-a)$ we have the following expansion: 
\begin{multline*}
I(\Phi (X^{2}-a ))-3bI(\Phi'+X)  =
 B_{<}(\Phi, X^{2}-a )
+\big(B_{0}(I(X^{3})-3aX,X^{2}-a)-3bI(X)\big)
\\+3\big(B_{0}(B_{<}(\Phi', X^{2}-a ), X^{2}-a )-3bI(\Phi')\big)
+B_{0}(\Phi^{\sharp}, X^{2}-a )
+B_{>}(\Phi, X^{2}-a).
\end{multline*}
Thanks to the Bony estimates (Proposition~\ref{proposition:Bony-estim}), we can see that all the terms appearing in the right hand side, apart from the third one, are well-defined even when $\mathbb X$ is no longer equal to $R_{a,b}X$ but a general rough distribution. The only problem  is to give an expansion of the third term of this equation.
We have to deal with the (ill-defined) diagonal term.
\begin{multline*}
K(\Phi',R_{a,b}X)(t)=B_{0}(B_{<}(\Phi', X^{2}-a ),X^{2}-a)(t)-bI(\Phi')\\ =\int_0^t\dd sP_{t-s}\int_0^s\dd\sigma P_{s-\sigma}(\Phi'_{\sigma}\prec(X^{2}_{\sigma}-a))\circ (X^{2}_s-a)-3bI(\Phi'). 
\end{multline*}
It can be decomposed in the following way 
\begin{equation}
\label{eq:exp-1}
\begin{split}
K(\Phi',R_{a,b}X)(t)
=&
\int_0^t\dd sP_{t-s}\Phi'_s\left(I( X^{2}-a)(s)\circ(X^{2}_s-a )-b\right)\\
&+\int_0^t\dd s P_{t-s}\int_0^s\dd\sigma(\Phi'_\sigma-\Phi'_s) (X^{2}_s-a) \circ P_{s-\sigma}(X_{\sigma}^{2}-a)
\\&+\int_0^t\dd sP_{t-s}\int_0^s\dd\sigma R^1_{s-\sigma}(\Phi'_{\sigma},X^{2}_\sigma-a)\circ (X^{2}_s-a )\\
&+\int_0^t\dd sP_{t-s}\int_0^sR^2(\Phi'_\sigma,P_{s-\sigma}X^{2}_{\sigma}-a, X^{2}_s-a )
\\&\equiv\sum_{i=1}^{4}K_i(\Phi',R_{a,b}X)(t),
\end{split}
\end{equation}
where 
$$
R^1_{s-\sigma}(f,g)=P_{s-\sigma}(f\prec g)-f\prec P_{s-\sigma}g,\quad R^2(f,g,h)=(f\prec g)\circ h-f(g\circ h)
$$
and $f,g,h$ are distributions lying in the suitable Besov spaces in order for $R^1$ and $R^2$ to be defined. Now the point is that the right hand side of this equation allows us to define the operator $K$ (and even each $K_i$) for a general rough distribution $\mathbb X$, as it  will be proved in  Proposition~\ref{prop:choiceL2} below, and for a general $\mathbb X$. Before stating the proposition let us give a useful improvement of the Schauder estimate which will help us to estimate the operator $K$.
\begin{lemma}\label{lemma:Holder-estim}
Let $f$ be a a space time distribution such that  
$
\sup_{t\in[0,T]}t^{(r+z)/2}\|f_t\|_{r}<+\infty
$.
Then the following bound holds
$$
\sup_{s,t\in[0,T]}\frac{\|I(f)(t)-I(f)(s)\|_{a-2b}}{|t-s|^b}\lesssim_{b,a,z,r}T^{\theta}\sup_{t\in[0,T]}t^{(r+z)/2}\|f_t\|_{r},
$$
where $a+z<2$, $z+r<2$, $a-r<2$, $0<a,b<1$ and $\theta>0$ is a constant depending only on $a,r,b,z$.
\end{lemma}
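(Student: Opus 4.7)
The plan is to split the difference, for $0\le s<t\le T$, as
\[ I(f)(t)-I(f)(s) = \underbrace{\int_s^t P_{t-u}f_u\,\dd u}_{=:J_2}+\underbrace{\int_0^s (P_{t-u}-P_{s-u})f_u\,\dd u}_{=:J_1} \]
and estimate each piece in $\mathcal C^{a-2b}$ using the two inequalities from Lemma~\ref{lemma:Heat-flow-smoothing}. Throughout, write $M=\sup_{t\in[0,T]} t^{(r+z)/2}\|f_t\|_{r}$, so that $\|f_u\|_r\le M\,u^{-(r+z)/2}$.

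For $J_2$, I would apply the smoothing estimate $\|P_{t-u}f_u\|_{a-2b}\lesssim (t-u)^{-(a-2b-r)/2}\|f_u\|_r$ when $a-2b>r$ (otherwise the Besov embedding suffices and the argument is easier), obtaining
\[ \|J_2\|_{a-2b}\lesssim M\int_s^t (t-u)^{-(a-2b-r)/2}u^{-(r+z)/2}\dd u.\]
The change of variables $u=s+v(t-s)$, $v\in[0,1]$, combined with the bound $s+v(t-s)\ge v(t-s)$, converts this into $(t-s)^{b+1-(a+z)/2}$ times a Beta integral $\int_0^1 (1-v)^{-(a-2b-r)/2}v^{-(r+z)/2}\dd v$ which converges precisely because $r+z<2$ and $a-r<2$ (noting $a-2b-r\le a-r$). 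Since $a+z<2$, the factor $(t-s)^{1-(a+z)/2}\le T^{\theta}$ with $\theta:=1-(a+z)/2>0$, leaving the desired $(t-s)^b$.

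For $J_1$, I would apply the heat continuity estimate $\|(P_{t-s}-1)g\|_{a-2b}\lesssim (t-s)^b\|g\|_a$ with $g=P_{s-u}f_u$, then use the smoothing bound $\|P_{s-u}f_u\|_a\lesssim (s-u)^{-(a-r)/2}\|f_u\|_r$, giving
\[ \|J_1\|_{a-2b}\lesssim (t-s)^b\, M\int_0^s (s-u)^{-(a-r)/2}u^{-(r+z)/2}\dd u.\]
The rescaling $u=sv$ turns the spatial integral into $s^{1-(a+z)/2}\int_0^1(1-v)^{-(a-r)/2}v^{-(r+z)/2}\dd v$, again finite by $a-r<2$ and $r+z<2$. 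Since $s\le T$ and $1-(a+z)/2>0$, this is bounded by $T^\theta(t-s)^b$ times $M$.

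Adding the two bounds yields the claimed estimate with $\theta=1-(a+z)/2$. The only non-routine point is the convergence/bookkeeping of the Beta integrals, which is exactly where each of the three hypotheses $a+z<2$, $z+r<2$, $a-r<2$ plays its role; no further structural ingredient is needed beyond Lemma~\ref{lemma:Heat-flow-smoothing}.
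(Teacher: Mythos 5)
Your proof is correct and follows essentially the same route as the paper: the same decomposition into $(P_{t-s}-1)\int_0^s P_{s-u}f_u\,\dd u$ and $\int_s^t P_{t-u}f_u\,\dd u$, with Lemma~\ref{lemma:Heat-flow-smoothing} applied in the same way to the first piece. The only (cosmetic) difference is that for the second piece the paper extracts the factor $|t-s|^b$ via H\"older's inequality in time, whereas you do it by the change of variables $u=s+v(t-s)$ together with a Beta-integral bound; both devices work and use the hypotheses $a+z<2$, $z+r<2$, $a-r<2$ in the same places.
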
 
\begin{proof}
By a simple computation we have 
$$
I(f)(t)-I(f)(s)=I^1_{st}+I^2_{st},
$$
where
$$
I^1_{st}=(P_{t-s}-1)\int_0^s\dd uP_{s-u}f_u\quad \mathrm{and}\quad I^2_{st}=\int_s^t\dd uP_{t-u}f_u.
$$
Using Lemma~\ref{lemma:Heat-flow-smoothing} the following bound holds
$$
\|I^1_{st}\|_{a-2b}\lesssim|t-s|^b\int_0^t\dd u(t-u)^{-(a-r)/2}u^{-(r+z)/2}\sup_{t\in[0,T]}t^{(r+z)/2}\|f_t\|_{r}<+\infty.
$$
To handle the second term we use the  H\"older inequality,
$$
\|I^2_{st}\|_{a-2b}\lesssim|t-s|^b\left(\int_s^t\dd u(t-u)^{\frac{-(a-2b-r)}{2(1-b)}}u^{-\frac{(z+r)}{2(1-b)}}\right)^{1-b}\sup_{t\in[0,T}t^{(r+z)/2}\|f_t\|_{r}<+\infty,
$$
which ends the proof.
\end{proof}

The following proposition gives us the regularity for our terms.
\begin{proposition}\label{proposition:diamond}\label{prop:choiceL2}
Assume that $ X$ is smooth and that $\mathbb X=R_{a,b}X$ then there is a choice of $L$ such that for all $z\in(1/2,2/3)$ the following bound holds
$$
\|K(\Phi',\mathbb X)(t))\|_{\star,1,T}\lesssim T^{\theta}(1+\|\mathbb X\|_{T,K})^2\|\Phi'\|_{\star,2,T},
$$
where $K\in[0,1]^4$ and $\theta>0$ are two small parameters depending only on $L$ and $z$. Thus this bound allows us to extend the operator $K$ the whole space of rough distributions $\mathcal X$, with the same bound. 
\end{proposition}
\begin{proof}
First, let us estimate the first term of the expansion~\eqref{eq:exp-1}.
\begin{equation*}
\begin{split}
\|K_1(\Phi',\mathbb X)(t)\|_{1+\delta}&\lesssim\int_0^t\dd s(t-s)^{-(1+\delta+\eta/2)/2}\|\Phi'_s(I( X^{2}-a )(s)\circ ( X^{\diamond}_s -a)-b\|_{-\eta/2}
\\&\lesssim\|\Phi'\|_{\star,2,T}\|I( X^{2} -a),X^{2}-a)-b\|_{\mathscr C_T^{-\eta/2,\nu}}\times\left(\int_0^t\dd s(t-s)^{-(1+\delta+\eta/2)/2}s^{-(\eta+\nu+z)/2}\right)
\\&\lesssim_{\beta,L}T^{\theta_1}\|\Phi'\|_{\star,2,T}\|I( X^{2} -a),X^{2}-a)-b\|_{\mathscr C_T^{-\eta/2,\nu}},
\end{split}
\end{equation*}
 for $\eta,\delta>0$ small enough and where $\theta_1>0$ depends on $L$. Hence 
$$
\sup_{t\in[0,T]}t^{(1+\delta+z)/2}\||K_1(\Phi',\mathbb X)(t)\|_{1+\delta}\lesssim_{L,z}T^{\theta_1}\|\Phi'\|_{\star,2,T}\|I( X^{2} -a),X^{2}-a)\|_{\mathscr C_T^{-\eta/2,\nu}}.
$$
Let us focus on the second term. We have 
\begin{equation*}
\begin{split}
\|K_2(\Phi',\mathbb X)(t)\|_{1+\delta}
&\lesssim
\int_0^t\dd s (t-s)^{-(1+\delta-\beta)/2}\int_0^s\dd\sigma \|(\Phi'_\sigma-\Phi'_s)(P_{s-\sigma}(X^{\diamond 2}_\sigma-a)\circ (X^{2}_s-a) )\|_{\beta}
\\&\lesssim_{\beta,\rho}\int_0^t\dd s (t-s)^{-(1+\delta-\beta)/2}\int_0^s\dd\sigma(s-\sigma)^{-(2+\rho)/2} \|\Phi'_\sigma-\Phi'_s\|_{c-2d}\|X^{2}_s-a\|^2_{-1-\rho}
\\&\lesssim_{L,\beta,\rho}\|\Phi'\|_{\star,2,T}\| X^{2}-a \|^2_{\mathcal C_T^{-1-\rho}}\int_0^t\dd s (t-s)^{-(1+\delta-\beta)/2}\int_0^s\dd\sigma(s-\sigma)^{-1-\rho/2+d}\sigma^{-(c+z)/2}
\\&\lesssim_{L,\beta,\rho}\|\Phi'\|_{\star,2,T}\| X^{2}-a \|^2_{\mathcal C_T^{-1-\rho}}\int_0^t\dd s (t-s)^{-(1+\delta-\beta)/2}s^{-(\rho+c-2d+z)/2}
\\&\lesssim_{L,\beta,\rho}T^{\theta_2}\|\Phi'\|_{\star,2,T}\| X^{2}-a \|^2_{\mathcal C_T^{-1-\rho}},
\end{split}
\end{equation*}
where $\beta=\min(c-2d,\rho)\geq0$,  $c,d,\rho>0$ are small enough, $z<1$ and $\theta_2>0$ is a constant which  depends only on $L$ and $z$. Using  Lemma~\ref{lemma:Heat-flow-smoothing},  we see that
$$
\|R^1_{s-\sigma}(\Phi'_{\sigma},X^{2}_\sigma-a)\|_{1+2\beta}\lesssim(s-\sigma)^{-(2+3\beta-\eta)/2}\|\Phi'_\sigma\|_{\eta}\|X^{2}_\sigma-a\|_{-1-\beta},
$$
for all $\beta>0$, $\beta<\eta/3$ small enough. By a straightforward computation we have 
\begin{equation*}
\begin{split}
\|K_3(\Phi',\mathbb X)(t))\|_{1+\delta}&\lesssim\int_0^t\dd s(t-s)^{-(1+\delta-\beta)/2}\int_0^s\dd\sigma\|(R^1_{s-\sigma}(\Phi'_\sigma,X_{\sigma}^{ 2}-a)\circ (X_{s}^{2}-a)\|_{\beta}
\\&\lesssim \int_0^t\dd s(t-s)^{-(1+\delta-\beta)/2}\int_0^s\dd \sigma\|R^1_{s-\sigma}(\Phi'_\sigma,X^{2}_\sigma)-a\|_{1+2\beta}\| X^{2}_s-a\|_{-1-\beta}
\\&\lesssim\| X^{2}-a\|_{\mathcal C_T^{-1-\beta}}^2\|\Phi'\|_{\star,2,T}\int_0^t\dd s(t-s)^{-(1+\delta-\beta)/2}\int_0^s\dd\sigma(s-\sigma)^{-(2+3\beta-\kappa)/2}\sigma^{-(\eta+z)/2}
\\&\lesssim \| X^{2}-a\|_{\mathcal C_T^{-1-\beta}}^2\|\Phi'\|_{\star,2,T}\int_0^t\dd s (t-s)^{-(1+\delta-\beta)/2}s^{-(3\beta-\kappa+\eta+z)/2}
\\&\lesssim T^{\theta_3}\| X^{2}-a\|_{\mathcal C_T^{-1-\beta}}^2\|\Phi'\|_{\star,2,T},
\end{split}
\end{equation*}
where $\theta_3>0$ is a constant depending on $L$ and $z$, $0<\beta<\eta/3$ is small enough and $z<1$. 
To treat the last term it is sufficient to use the commutation result given in  Proposition~\ref{proposition:comm-1}. Indeed we have
$$
\|R^2(\Phi'_\sigma,P_{s-\sigma}(X^{2}_{\sigma}-a), X^{2}_s-a )\|_{\eta-3\beta}\lesssim_{\eta,\beta}s^{-(\eta+z)/2}(s-\sigma)^{-(2-\beta)/2}\| X^{2}-a\|^2_{\mathcal C_T^{-1-\beta}}\|\Phi'\|_{\star,2,T},
$$
for $0<\beta<\eta/3$ small enough. Hence
\begin{equation*}
\begin{split}
\||K_4(\Phi',\mathbb X)(t))\|_{1+\delta}&\lesssim_{\eta,\beta}\| X^{2}-a\|^2_{\mathcal C_T^{-1-\beta}}\|\Phi'\|_{\star,2,T}\int_0^t\dd s(t-s)^{-(1+\delta-\eta+3\beta)/2}\int_0^s\dd \sigma s^{-(\eta+z)/2}(s-\sigma)^{-(2-\beta)/2}
\\&\lesssim_{\eta,\beta}\| X^{2}-a\|^2_{\mathcal C_T^{-1-\beta}}\|\Phi'\|_{\star,2,T}\int_0^t\dd s(t-s)^{-(1+\delta-\eta+3\beta)/2}s^{-(\eta+z+\beta)/2}
\\&\lesssim T^{\theta_4}\| X^{2}-a\|^2_{\mathcal C_T^{-1-\beta}}\|\Phi'\|_{\star,2,T},
\end{split}
\end{equation*} 
for $\theta_4>0$ depending on $L$ and $z<1$ and $\beta,\eta,\delta>0$ small enough. Binding all these bounds together, we can conclude that  
\begin{multline*}
\sup_{t\in[0,T]}t^{(1+\delta+z)/2}\|K(\Phi',\mathbb X)(t)\|_{1+\delta}\\
\lesssim_{L,z}T^{\theta}(1+\| X^{2}-a \|_{\mathcal C_T^{-1-\rho}}+\|I( X^{2} -a),X^{2}-a)-b\|_{\mathscr C_T^{-\eta/2,\nu}})^2\|\Phi'\|_{\star,2,T},
\end{multline*}
for $\theta>0$ depending on $L$ and $z$. The same arguments gives
\begin{multline*}
\sup_{t\in[0,T]}t^{(1/2+\gamma+z)/2}\|K(\Phi',\mathbb X)(t)\|_{1/2+\gamma}\\
\lesssim_{L,z}T^{\theta}(1+\| X^{2}-a \|_{\mathcal C_T^{-1-\rho}}+\|I( X^{2} -a),X^{2}-a)-b\|_{\mathscr C_T^{-\eta/2,\nu}})^2\|\Phi'\|_{\star,2,T}
\end{multline*}
and 
\begin{equation*}
\begin{split}
\sup_{t\in[0,T]}t^{(\kappa+z)/2}\|K(\Phi',\mathbb X)(t)\|_{\kappa}\lesssim_{L,z}&T^{\theta}(1+\| X^{\diamond 2} \|_{\mathcal C_T^{-1-\rho}}+\|I( X^{2} -a),X^{2}-a)-b\|_{\mathscr C_T^{-\eta/2,\nu}})^2\|\Phi'\|_{\star,2,T}.
\end{split}
\end{equation*}
To obtain the needed bound we still need to estimate the following quantity 
$$
\sup_{(s,t)\in[0,T]^2}s^{\frac{z+a}{2}}\frac{\|K(\Phi',\mathbb X)(t)-K(\Phi',\mathbb X)(s)\|_{a-2b}}{|t-s|^{b}}.
$$
To handle this   we use the fact that 
$
K_i(\Phi',\mathbb X)(t)=I(f^i)
$
with 
$$
f^1(s)=\Phi'_sI( X^{2}-a )(s)\circ ( X^{\diamond}_s -a)-b),\quad f^2(s)=\int_0^s\dd\sigma(\Phi'_\sigma-\Phi'_s)(( X^{2}_s-a)\circ P_{s-\sigma}X_{\sigma}^{2}-a)
$$
and 
$$
f^3(s)=\int_0^s\dd\sigma(R^1_{s-\sigma}(\Phi'_{\sigma},X^{2}_\sigma-a)\circ (X^{2}_s-a ),\quad f^4(s)=\int_0^sR^2(\Phi'_\sigma,P_{s-\sigma}(X^{2}_{\sigma}-a), X^{2}_s-a ).
$$
By an easy computation we have
$$
\|f^1(t)\|_{\eta/2}\lesssim_{\eta}s^{-(\eta+z)/2}\|\Phi'\|_{\star,2,T}(1+\|I( X^{2} -a),X^{2}-a)-b\|_{\mathscr C_T^{-\eta/4,\nu}})^2,
$$
\begin{multline*}
\|f^2(s)\|_{-d}\lesssim\|\Phi'\|_{\star,2,T}\| X^{2}-a\|^2_{-1-d/4}\\
\times\int_0^s\dd \sigma(s-\sigma)^{-1+d/2}\sigma^{-(c+z)/2}\lesssim_{z,c,d}s^{d/2-(c+z)/2}\|\Phi'\|_{\star,2,T}\| X^{2}-a\|_{-1-d/4},
\end{multline*}
and
\begin{multline*}
\|f^3(s)\|_{2\eta/3}\lesssim\|\Phi'\|_{\star,2,T}\| X^{2}-a\|^2_{-1-\eta/9}\\
\times\int_0^s\dd s(s-\sigma)^{-1+\eta/9}s^{-(\eta+z)/2}\lesssim s^{-(11\eta+9z)/2}\|\Phi'\|_{\star,2,T}\| X^{2}-a\|^2_{-1-\eta/9},
\end{multline*}
where $\nu>0$ depends only on $L$. A similar bound holds for $f^4$, which allows us to conclude by Lemma~\ref{lemma:Holder-estim} that we have
$$
\sup_{(s,t)\in[0,T]^2}s^{\frac{z+a}{2}}\frac{\|K(\Phi',\mathbb X)(t)-K(\Phi',\mathbb X)(s)\|_{a-2b}}{|t-s|^{b}}\lesssim T^\theta\|\Phi'\|_{\star,2,T}\| X^{\diamond 2} \|^2_{-1-\rho},
$$
for some $\rho>0$, $\theta>0$ and $\eta,c,d>0$ small enough and $z\in(1/2,2/3)$.
\end{proof}

We are now able to give the meaning of $I(\Phi \diamond X^{\diamond 2} )$ for  $\Phi\in\mathcal D_\mathbb X^L$.
\begin{corollary}\label{corollary:integ-def-2}
Assume that $\mathbb X\in\mathcal X$ and let $\Phi\in\mathcal D_\mathbb X^L$ then for $z\in(1/2,2/3)$ and for a suitable choice of $L$  the term $I(\Phi \diamond X^{\diamond 2} )[\Phi,\mathbb X]$ is defined via the following expansion:
$$
I(\Phi \diamond X^{\diamond 2} )[\Phi,\mathbb X]:=B_{<}(\Phi, X^{\diamond 2} )+B_{>}(\Phi, X^{\diamond 2} )+I\big(\big(I(X^{\diamond 3})\circ X^{\diamond 2}\big)^{\diamond}\big)+3K(\Phi',\mathbb X)+B_{0}(\Phi^\sharp ,X^{\diamond 2} ).
$$
We have the following bound
$$
\|B_{0}(\Phi^{\sharp}, X^{\diamond 2} )\|_{\star,1,T}+\|B_{>}(\Phi, X^{\diamond 2} )\|_{\star,1,T}\lesssim T^{\theta}\|\Phi\|_{\star,T}\| X^{\diamond 2} \|_{\mathcal C_T^{-1-\rho}},
$$
for some $\theta,\rho>0$ being two non-negative constants depending on $L$ and $z$. Moreover if $a,b\in\mathbb R$, $X\in \mathcal C_T^{1}(\mathbb T^3)$ and $\varphi\in C^{\infty}([0,T])$, we have 
$$
I(\Phi\diamond X^{\diamond 2})[\Phi,R_{a,b}^{\varphi}\mathbf X]=I(\Phi(X^2-a))+3bI(X+\Phi'),
$$
for every $\Phi\in\mathcal D_{R^\varphi_{a,b}\mathbf{X}}$.
\end{corollary}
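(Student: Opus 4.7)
The plan is to reduce the problem to the structural decomposition already available. Starting from the Bony decomposition applied to the renormalized product,
\[
\Phi \Diamond X^{\Diamond 2} = \pi_{<}(\Phi,X^{\Diamond 2}) + \pi_{>}(\Phi,X^{\Diamond 2}) + \pi_{0,\Diamond}(\Phi,X^{\Diamond 2}),
\]
I apply $I$ termwise. The first and second pieces immediately produce $B_{<}(\Phi,X^{\Diamond 2})$ and $B_{>}(\Phi,X^{\Diamond 2})$, which are classical: since $X^{\Diamond 2}\in\CC_T^{-1-\rho}$ has negative regularity, Proposition~\ref{proposition:Bony-estim} bounds $\pi_{>}(\Phi,X^{\Diamond 2})$ in $\CC^{-1/2-\delta-\rho}$ using only $\Phi\in\CC_T^{-1/2-\delta}$ (available through the paracontrolled expansion), while $B_{<}$ carries no regularity obstruction. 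The resonant term is where the rough structure is needed: substituting the controlled decomposition $\Phi = I(X^{\Diamond 3}) + B_{<}(\Phi',X^{\Diamond 2}) + \Phi^{\sharp}$ inside $\pi_{0,\Diamond}$ yields
\[
\pi_{0,\Diamond}(\Phi,X^{\Diamond 2}) = \pi_{0,\Diamond}(I(X^{\Diamond 3}),X^{\Diamond 2}) + \pi_{0,\Diamond}(B_{<}(\Phi',X^{\Diamond 2}),X^{\Diamond 2}) + \pi_{0}(\Phi^{\sharp},X^{\Diamond 2}).
\]
The first piece is one of the components of $\mathbb X$ by Definition~\ref{hyp:renormalization1}; the second, after applying $I$, is exactly $X^{\Diamond}(\Phi')$ studied in Proposition~\ref{proposition:diamond}; the third is handled by standard Bony estimates because $\Phi^{\sharp}$ is more regular than $1$.

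For the claimed estimate, I treat the two listed terms along the lines of Proposition~\ref{proposition:diamond}. For $B_{>}(\Phi,X^{\Diamond 2})$, Proposition~\ref{proposition:Bony-estim} together with Lemma~\ref{lemma:Heat-flow-smoothing} gives, for a small $\rho>0$,
\[
\|B_{>}(\Phi,X^{\Diamond 2})(t)\|_{1+\delta} \lesssim \int_0^t (t-s)^{-(1+\delta+1/2+\delta+\rho)/2}\|\Phi_s\|_{-1/2-\delta}\|X^{\Diamond 2}_s\|_{-1-\rho}\,\dd s,
\]
which after insertion of the weight $s^{(1/2+\delta+z)/2}$ coming from the controlled norm produces the factor $T^{\theta}$ for $\theta>0$. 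The three other sup-norms and the Hölder-in-time contribution are estimated by the same mechanism, with the time Hölder part supplied by Lemma~\ref{lemma:Holder-estim}. For $B_{0}(\Phi^{\sharp},X^{\Diamond 2})$, we have $\Phi^{\sharp}\in\CC^{1+\delta}$ with blow-up $t^{-(1+\delta+z)/2}$ and $X^{\Diamond 2}\in\CC^{-1-\rho}$. Choosing $\rho<\delta$ makes $\pi_{0}(\Phi^{\sharp},X^{\Diamond 2})$ well-defined by Proposition~\ref{proposition:Bony-estim} in $\CC^{\delta-\rho}$, and the four-part estimate is a direct repetition of the pattern of Proposition~\ref{proposition:diamond} with $\Phi^{\sharp}$ in the place of the rougher object.

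Finally, the consistency statement in the smooth case is a direct algebraic verification. When $X\in\CC_T^1$ and $\mathbb X = R^{\varphi}_{a,b}\mathbf X$, the renormalized products reduce to $X^{\Diamond 2}=X^2-a$, $I(X^{\Diamond 3})=I(X^3-3aX)$, and the bracketed $\pi_{0,\Diamond}$ components differ from the raw $\pi_0$ components by $\varphi + b$ and $3\varphi X + 3bX$ respectively. Summing the five terms of the expansion and using Bony's identity $\pi_{<}+\pi_{0}+\pi_{>}=\text{product}$ in the classical sense, the $\varphi$-contributions cancel against those hidden in $X^{\Diamond}(\Phi')$ and in $B_{0,\Diamond}(I(X^{\Diamond 3}),X^{\Diamond 2})$, leaving precisely $I(\Phi(X^2-a)) + 3bI(X+\Phi)$.

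The main obstacle I expect is bookkeeping: several exponents $\delta,\delta',\gamma,\kappa,a,b,c,d,\eta,\nu,\rho$ must be simultaneously compatible with $z\in(1/2,2/3)$, the constraints $2b\le a$, $2d\le c$, and the requirement that all occurrences of $(t-s)^{-\alpha}s^{-\beta}$ integrate and produce a positive power of $T$. No single estimate is hard, but their compatibility dictates the "suitable choice of $L$" invoked in the statement, and has to be verified in parallel with those of Propositions~\ref{proposition:int-estim-1} and~\ref{proposition:diamond} used in the fixed-point argument of Section~\ref{section:fixed point}.
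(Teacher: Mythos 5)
Your overall route is the one the paper takes: the expansion is obtained by inserting the controlled decomposition of $\Phi$ into the Bony decomposition of the product, the terms $B_{0\Diamond}(I(X^{\Diamond3}),X^{\Diamond2})$ and $X^{\Diamond}(\Phi')$ are delegated to Definition~\ref{hyp:renormalization1} and Proposition~\ref{proposition:diamond}, and the estimate for $B_{0}(\Phi^{\sharp},X^{\Diamond 2})$ (Schauder bound on $\pi_{0}(\Phi^\sharp,X^{\Diamond2})\in\mathcal C^{\delta-\rho}$, then the usual $I^1_{st}+I^2_{st}$ splitting for the weighted H\"older part) is exactly what the paper does. The consistency check in the smooth case is only sketched, but the paper does not spell it out either.

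There is, however, a genuine problem in your treatment of $B_{>}(\Phi,X^{\Diamond 2})$. You claim that Proposition~\ref{proposition:Bony-estim} bounds $\pi_{>}(\Phi,X^{\Diamond 2})$ in $\mathcal C^{-1/2-\delta-\rho}$ ``using only $\Phi\in\mathcal C_T^{-1/2-\delta}$'', and your displayed inequality pairs the norm $\|\Phi_s\|_{-1/2-\delta}$ with the heat-kernel exponent $(t-s)^{-(1+\delta+1/2+\delta+\rho)/2}$. These two choices are incompatible: for $\pi_{>}(f,g)$ with both indices negative the output regularity is $\alpha+\beta$, not $\alpha$, so with $\alpha=-1/2-\delta$ and $\beta=-1-\rho$ the resonant-free product lands in $\mathcal C^{-3/2-\delta-\rho}$, and the smoothing exponent needed to reach $\mathcal C^{1+\delta}$ becomes $(5/2+2\delta+\rho)/2>1$, which is not integrable. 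As written the estimate therefore does not close, and indeed it cannot: if only the $\mathcal C^{-z}$ information on $\Phi$ were used, $B_{>}(\Phi,X^{\Diamond2})$ would at best lie in $\mathcal C^{1/2-\delta-\rho-}$, which is not enough for the $\|\cdot\|_{\star,1,T}$ norm that demands control in $\mathcal C^{1+\delta}$. The point of requiring $\Phi\in\mathcal D^{L}_{\mathbb X}$ here is precisely that the controlled decomposition gives $\Phi_s$ \emph{positive} spatial regularity (of order $\kappa$, and in fact $1/2-\delta$, limited by $I(X^{\Diamond3})$) with a time weight $s^{-(\kappa+z)/2}$; the correct integrand is $(t-s)^{-(1+\delta+1+\rho-\kappa)/2}\|\Phi_s\|_{\kappa}\|X^{\Diamond2}_s\|_{-1-\rho}$ with $\kappa>\delta+\rho$, which is integrable and produces the claimed $T^{\theta}$. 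With that replacement the rest of your argument (the other weighted sup-norms and the H\"older part via Lemma~\ref{lemma:Holder-estim}) goes through as in the paper.
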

\begin{proof}
We remark that all the terms in the definition of $I(\Phi \diamond X^{\diamond 2} )$ are well-defined due to  Proposition~\ref{proposition:diamond}  and the definition of the paraproduct. We also notice that
\begin{equation*}
\begin{split}
\|B_{0}(\Phi^\sharp, X^{\diamond 2} )(t)\|_{1+\delta}&\lesssim\int_0^t\dd s(t-s)^{-(1+\delta/2)/2}\|\Phi^\sharp_s\|_{1+\delta}\| X^{\diamond 2} \|_{-1-\delta/2}
\\&\lesssim\|\Phi^\sharp\|_{\star,1,T}\| X^{\diamond 2} \|_{\mathcal C_T^{-1-\delta/2}}\int_0^t\dd s(t-s)^{-(1+\delta/2)/2}s^{-(1+\delta+z)/2}
\\&\lesssim s^{-(3/2\delta+z)/2}\|\Phi^\sharp\|_{\star,1,T}\| X^{\diamond 2} \|_{\mathcal C_T^{-1-\delta/2}},
\end{split}
\end{equation*}
 which gives easily 
$$
\sup_{t\in[0,T]}t^{(1+\delta+z)/2}\|B_{0}(\Phi^\sharp, X^{\diamond 2} )(t)\|_{1+\delta}\lesssim T^{1/2-\delta}\|\Phi^\sharp\|_{\star,1,T}\| X^{\diamond 2} \|_{\mathcal C_T^{-1-\delta/2}},
$$
for $\delta<1/2$. By a similar computation we obtain that there exists $\theta>0$ depending on $L$ and $z$ such that 
\begin{multline*}
\sup_{t\in[0,T]}t^{(1/2+\gamma+z)/2}\|B_{0}(\Phi^\sharp, X^{\diamond 2} )(t)\|_{1/2+\gamma}+\sup_{t\in[0,T]}t^{(\kappa+z)/2}\|B_{0}(\Phi^\sharp, X^{\diamond 2} )(t)\|_{\kappa}\\ \lesssim T^{\theta}\|\Phi^\sharp\|_{\star,1,T}\| X^{\diamond 2} \|_{\mathcal C_T^{-1-\delta/2}}.
\end{multline*}
 To obtain the needed bound for this term we still need  to estimate the H\"older type norm. We remark that 
$$
\|\Phi^\sharp_s\circ X^{\diamond 2}_s\|_{\delta/2}\lesssim s^{-(1+\delta+z)/2}\|\Phi^\sharp_s\|_{1+\delta} \| X^{\diamond 2}_s \|_{-1-\delta/2}
$$
and then as usual we decompose the norm in the following way
$$
B_{0}(\Phi^\sharp_t, X^{\diamond 2} )(t)-B_{0}(\Phi^\sharp_s, X^{\diamond 3}_s )=I^1_{st}+I^2_{st},
$$
where 
$$
I^1_{st}=(P_{t-s}-1)\int_0^t\dd uP_{t-u}(\Phi^\sharp_u\circ X^{\diamond 2}_u)\quad \mathrm{and}\quad I_{st}^2=\int_s^t\dd uP_{t-u}(\Phi^\sharp_u\circ X^{\diamond 2}_u).
$$
A straightforward computation gives 
\begin{equation*}
\begin{split}
\|I^1_{st}\|_{a-2b}&\lesssim\|\Phi^\sharp\|_{\star,1,T}\| X^{\diamond 2} \|_{\mathcal C_T^{1-\delta/2}}|t-s|^b\int_0^t\dd u(t-u)^{-(a-\delta/2)/2}u^{-(1+\delta+z)/2}
\\&\lesssim T^{(1-a-\delta/2-z)/2}|t-s|^b\|\Phi^\sharp\|_{\star,1,T}\| X^{\diamond 2} \|_{\mathcal C_T^{1-\delta/2}}.
\end{split}
\end{equation*}
For $I^2$ we use the H\"older inequality, which gives 
\begin{equation*}
\begin{split}
\|I^2_{st}\|_{a-2b}&\lesssim|t-s|^b\|\Phi^\sharp\|_{\star,1,T}\| X^{\diamond 2} \|_{\mathcal C_T^{1-\delta/2}}
\left(\int_s^t\dd u(t-u)^{-\frac{a-2b-\delta/2}{2(1-b)}}u^{-\frac{1+\delta+z}{2(1-b)}})\right)^{1-b}
\\&\lesssim T^{(1-a-\delta/2-z)/2}|t-s|^b\|\Phi^\sharp\|_{\star,1,T}\| X^{\diamond 2} \|_{\mathcal C_T^{1-\delta/2}},
\end{split}
\end{equation*}
for $a,\delta>0$ small  enough and $z<1$. We have obtained that 
$$
\|B_{0}(\Phi^\sharp, X^{\diamond 2} )\|_{\star,1,T}\lesssim T^{\theta}\|\Phi^\sharp\|_{\star,1,T}\| X^{\diamond 2} \|_{\mathcal C_T^{-1-\delta/2}},
$$
for some $\theta>0$ depending on $L$ and $z$. The bound for the term $B_{>}(\Phi, X^{\diamond 2} )$ is obtained by a similar argument and this ends the proof. 
\end{proof}
\begin{remark}
When there are no ambiguity we use the notation $I(\Phi\diamond X^{\diamond2})$ instead of $I(\Phi\diamond X^{\diamond2})[\Phi,\mathbb X]$.
\end{remark}
\section{Fixed point procedure}\label{section:fixed point}
Using the analysis of $I(\Phi\diamond X^{\diamond 2} )$ and $I(\Phi^2 \diamond X)$ developed in the previous section, we can now show that the equation       
$$
\Phi=I( X^{\diamond 3} )+3I(\Phi \diamond X^{\diamond 2} )+3I(\Phi^2\diamond X)+I(\Phi^3)+\Psi
$$
admits a unique solution $\Phi\in\mathcal D_\mathbb X^L$ for a suitable choice of $L$ and $z\in(1/2,2/3)$, via the fixed point method.  We also show that if $u^\eps$ is the solution of the regularized equation and $\Phi^\eps$ is such that $u^\eps=X^\eps+\Phi^\eps$, then $d(\Phi^\eps,\Phi)$ goes to $0$ as $\eps$. Hence, by the convergence of $X^\eps$ to $X$ we have the convergence of $u^\eps$ to $u=\Phi + X$. Let us begin by giving our fixed point result. 
\begin{theorem}\label{theorem:FP}
Assume that $\mathbb X\in\mathcal X$,  $u^0\in\mathcal C^{-z}(\mathbb T^3)$ with $z\in(1/2,2/3)$ and $L$ is such that the bounds of Propositions~\ref{prop:choiceL1} and~\ref{prop:choiceL2} are satisfied. Let $(\Phi,\Phi')\in\mathcal D_\mathbb X^L$ and $\Psi=Pu^0$. Then we define the application $\Gamma:\mathcal D_{\mathbb X,T}^L\to\mathcal C_T^{-z}(\mathbb T^3)$ by
$$
\Gamma(\Phi,\Phi')=I( X^{\diamond 3} )+3I(\Phi \diamond X^{\diamond 2} )+3I(\Phi^2\diamond X)+I(\Phi^3)+\Psi,
$$
where $I(\Phi \diamond X^{\diamond 2} )$ and $I(\Phi^2X)$ are given by Corollary~\ref{corollary:integ-def-2}  and  Proposition~\ref{proposition:int-estim-1}. Then $(\Gamma(\Phi),\Phi)\in\mathcal D_\mathbb X^L$ for a suitable choice of $L$ and it satisfies the following bound:
\begin{equation}\label{eq:bound}
\begin{split}
\|\Gamma(\Phi)\|_{\star,T}\lesssim &(T^\theta\|\Phi\|_{\star,L,T}+1)^3(1+\|\mathbb X\|_{T,K}+\|u^0\|_{-z})^3.
\end{split}
\end{equation}
Moreover for $\Phi_1,\Phi_2\in\mathcal D_\mathbb X^L$ the following bound holds:
\begin{equation}\label{eq:contarc}
\begin{split}
d_{T,L}\left(\Gamma(\Phi_1),\Gamma(\Phi_2)\right)\lesssim& T^{\theta}d_{T,L}\left(\Phi_1,\Phi_2\right)(\|\Phi_1\|_{\star,L,T}+\|\Phi_2\|_{\star,L,T}+1)^2(1+\|\mathbb X\|_{T,K}+\|u^0\|_{-z})^3,
\end{split}
\end{equation}
for some $\theta>0$ and $K\in[0,1]^8$ depending on $L$ and $z$. We can conclude that for this choice of $L$ there exists $T>0$ and a unique $\Phi\in\mathcal D_{\mathbb X,T}^L$ such that
\begin{equation}\label{eq:local sol}
\Phi=\Gamma(\Phi)=I( X^{\diamond 3} )+3I(\Phi^2 \diamond X^{\diamond 2} )+3I(\Phi^2\diamond X)+I(\Phi^3)+\Psi.
\end{equation}
\end{theorem}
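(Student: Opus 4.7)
The plan is to apply the Banach fixed point theorem on a closed ball of $\mathcal{D}_{\mathbb{X},T}^L$ once $T$ is taken sufficiently small, with the parameters $L=(\delta,\gamma,\kappa,a,b,c,d,\eta)$ and $z\in(1/2,2/3)$ tuned so that every Schauder-type integral encountered below produces a strictly positive power of $T$. The inequalities \eqref{eq:bound} and \eqref{eq:contarc} provide, respectively, the invariance of this ball and the strict contractivity of $\Gamma$ on it.

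First, I would verify that $\Gamma(\Phi)\in\mathcal{D}_{\mathbb{X}}^L$ by reading off its paracontrolled decomposition. By Corollary~\ref{corollary:integ-def-2}, the only piece of $\Gamma(\Phi)$ that is not strictly more regular than $I(X^{\Diamond 3})$ is $3B_{<}(\Phi,X^{\Diamond 2})$, so I would set $\Gamma(\Phi)'=3\Phi$ and
\[
\Gamma(\Phi)^{\sharp}=\Psi+3\bigl[I(\Phi\Diamond X^{\Diamond 2})-B_{<}(\Phi,X^{\Diamond 2})\bigr]+3I(\Phi^{2}X)+I(\Phi^{3}).
\]
The derivative norm $\|3\Phi\|_{\star,2,L,T}$ is then controlled by expanding $\Phi=I(X^{\Diamond 3})+B_{<}(\Phi',X^{\Diamond 2})+\Phi^{\sharp}$ and invoking Proposition~\ref{proposition:Bony-estim} together with Lemma~\ref{lemma:Heat-flow-smoothing}. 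The sharp norm $\|\Gamma(\Phi)^{\sharp}\|_{\star,1,L,T}$ is bounded by combining Proposition~\ref{proposition:int-estim-1} for $I(\Phi^{2}X)$, Proposition~\ref{proposition:diamond} and Corollary~\ref{corollary:integ-def-2} for the bracketed diamond remainder, Lemma~\ref{lemma:Heat-flow-smoothing} applied to $u^{0}\in\mathcal{C}^{-z}$ for $\Psi=Pu^{0}$, and a direct paraproduct decomposition of $\Phi^{3}$ controlled by Lemma~\ref{lemma:Holder-estim}. Gathering the factors $T^{\theta_{i}}$ with $\theta_{i}>0$ yields \eqref{eq:bound}.

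The contraction estimate \eqref{eq:contarc} follows from the same analysis applied to $\Gamma(\Phi_{1})-\Gamma(\Phi_{2})$. The multilinearity
\[
\Phi_{1}^{2}-\Phi_{2}^{2}=(\Phi_{1}-\Phi_{2})(\Phi_{1}+\Phi_{2}),\qquad \Phi_{1}^{3}-\Phi_{2}^{3}=(\Phi_{1}-\Phi_{2})(\Phi_{1}^{2}+\Phi_{1}\Phi_{2}+\Phi_{2}^{2}),
\]
together with the evident bilinearity of $I(\cdot\Diamond X^{\Diamond 2})$ in $\Phi$ (once expanded via Corollary~\ref{corollary:integ-def-2}), makes every difference linear in $\Phi_{1}-\Phi_{2}$ and of total degree at most two in $(\Phi_{1},\Phi_{2})$, so the same tri-linear Schauder bounds close the estimate in the announced form. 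Having \eqref{eq:bound} and \eqref{eq:contarc}, I would set $M:=2C(1+\|\mathbb{X}\|_{T,K}+\|u^{0}\|_{-z})^{3}$ with $C$ the implicit constant in \eqref{eq:bound}, pick $T$ so small that $CT^{\theta}(M+1)^{2}(1+\|\mathbb{X}\|_{T,K}+\|u^{0}\|_{-z})^{3}\le 1/2$, and apply Banach on the closed ball $\{\Phi:\|\Phi\|_{\star,L,T}\le M\}$ of the complete metric space $(\mathcal{D}_{\mathbb{X},T}^{L},d_{L,T})$.

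The main obstacle, in my view, is not conceptual but combinatorial: one must simultaneously choose the eight components of $L$, together with the exponents $(\delta,\delta',\nu,\rho)=K$ governing $\mathcal{X}$, so that every time integral arising in the successive applications of Lemmas~\ref{lemma:Heat-flow-smoothing} and \ref{lemma:Holder-estim} is both sub-critical (for absolute integrability) and leaves behind a positive power of $T$ after integration against the weights $s^{(r+z)/2}$. This is particularly delicate for the purely cubic term $I(\Phi^{3})$, because the paracontrolled expansion only places $\Phi$ in a time-weighted $\mathcal{C}^{1/2-\delta}$ scale that is singular at $t=0$, so the cube has to be treated with a paraproduct decomposition analogous to — but simpler than — the one carried out in Proposition~\ref{proposition:int-estim-1}, and one must check that the corresponding small-time singularity remains integrable.
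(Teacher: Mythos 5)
Your proposal follows essentially the same route as the paper: identify the paracontrolled structure $\Gamma(\Phi)'=3\Phi$ with the sharp remainder collecting $\Psi$, $I(\Phi^2X)$, $I(\Phi^3)$ and the non-paraproduct pieces of $3I(\Phi\Diamond X^{\Diamond 2})$, bound the two component norms via Propositions~\ref{proposition:int-estim-1} and~\ref{proposition:diamond}, Corollary~\ref{corollary:integ-def-2} and the Schauder estimates, and then run Banach's fixed point on a small ball for small $T$. The only cosmetic difference is the treatment of $I(\Phi^3)$: the paper simply multiplies in $\mathcal C^\eta$ with $\eta>0$ (no paraproduct decomposition needed), and the small-time singularity $s^{-3(\eta+z)/2}$ you worry about is integrable precisely because $z<2/3$, which is the same integrability check you flag.
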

\begin{proof}
By   Corollary~\ref{corollary:integ-def-2} and Proposition~\ref{proposition:int-estim-1} we see that $\Gamma(\Phi)$ has the needed algebraic structure  of the controlled distribution. More precisely,
$$
\Gamma(\Phi)'=\Phi,\quad\Gamma(\Phi)^\sharp=3B_{>}(\Phi, X^{\diamond 2} )+X^{\diamond}(\Phi')+3B_{0}(\Phi^\sharp ,X^{\diamond 2} )+3I(\Phi^2X)+I(\Phi^3)+\Psi
$$
and $\Gamma(\Phi)\in\mathcal C_T^{-z}$. To show that $\Gamma(\Phi)\in\mathcal D_\mathbb X^L$ and to obtain the first bound, it remains to estimate $\|\Phi\|_{\star,2,L,T}$ and $\|\Gamma(\Phi)^\sharp\|_{\star,1,L,T}$. A straightforward computation gives
\begin{equation*}
\begin{split}
\|\Phi_t\|_{\eta}&\lesssim\|I(X^{\diamond3})(t)\|_{\eta}+\|B_{<}(\Phi', X^{\diamond 2} )(t)\|_{\eta}+\|\Phi_t^\sharp\|_{\eta}\
\\&\lesssim\|I(X^{\diamond 3})\|_{\eta}+\|\Phi'\|_{\star,2,T}\| X^{\diamond 2} \|_{-1-\eta}\int_0^t\dd s(t-s)^{-(1+2\eta/2)/2}s^{-(\eta+z)/2}+t^{-(\kappa+z)}\|\Phi^\sharp\|_{\star,1,T}
\\&\lesssim (\|\Phi\|_{\star,L,T}+1)(\| X^{\diamond 2}\|_{-1-\eta}+\|I(X^{\diamond3})\|_{\eta}+1)t^{\min(1/2-(3\eta+z)/2,-(\kappa+z)/2)}.
\end{split}
\end{equation*}   
Hence, for $0<\eta<\kappa$ and $\eta<1/2$ and $z\in(1/2,2/3)$ small enough we see that 
$$
\sup_{t\in[0,T]}t^{(\eta+z)/2}\|\Phi\|_{\eta}\lesssim T^{\kappa-\eta} (\|\Phi\|_{\star,T}+1)(\| X^{\diamond 2} \|_{\mathcal C_T^{-1-\eta}}+\|I(X^{\diamond 3})\|_{\mathcal C_T^{\eta}}+1).
$$
We focus on the explosive H\"older type norm for this term, indeed a quick computation gives
\begin{multline*}
\|\Phi_t-\Phi_s\|_{c-2d}\lesssim\|I( X^{\diamond 3} )(t)-I( X^{\diamond 3} )(s)\|_{c-2d}\\
+\|B_{<}(\Phi', X^{\diamond 2} )(t)-B_{<}(\Phi', X^{\diamond 2} )(s)\|_{c-2d}+\|\Phi^\sharp_t-\Phi^\sharp_s\|_{c-2d}.
\end{multline*}
Let us estimate the first term in the right hand side. Using the regularity for $I( X^{\diamond 3} )$ we obtain that for $d>0$ small enough and $c<1/2$ 
$$
\|I( X^{\diamond 3} )(t)-I( X^{\diamond 3} )(s)\|_{c-2d}\lesssim|t-s|^{d}\|I( X^{\diamond 3} )\|_{d,c-2d}.
$$
We notice that the increment appearing in second term has the following representation:
$$
B_{<}(\Phi', X^{\diamond 2} )=I(f),
$$
where $f=\pi_{<}(\Phi', X^{\diamond 2} )$. To treat this term it is sufficient to notice that 
$$
\|f_t\|_{-1-\delta}\lesssim\|\Phi'_{t}\|_{\eta}\| X^{\diamond 2} \|_{-1-\delta}\lesssim t^{-(\eta+z)/2}\|\Phi\|_{\star,L,T}\| X^{\diamond 2} \|_{-1-\delta}
$$
and then a usual argument gives  
$$
\|B_{<}(\Phi', X^{\diamond 2} )(t)-B_{<}(\Phi', X^{\diamond 2} )(s)\|_{c-2d}\lesssim T^\theta|t-s|^{d}t^{-(c+z)/2}\|\Phi\|_{\star,L,T}\| X^{\diamond 2} \|_{-1-\delta},
$$
for some $\theta>0$ and $c,\delta>0$. For the last term we use  that
$$
\|\Phi^\sharp_t-\Phi^\sharp_s\|_{c-2d}\lesssim|t-s|^bt^{-(a+z)/2}\|\Phi\|_{\star,T}\lesssim  T^{b-d+a-c}|t-s|^dt^{-(c+z)/2}\|\Phi\|_{\star,L,T},
$$
for $c-2d<a-2b$, $d<b$ and then $c<a$ which gives:
$$
\sup_{s,t\in[0,T]}s^{-(c+z)/2}\frac{\|\Phi_t-\Phi_s\|_{c-2d}}{|t-s|^d}\lesssim T^\theta(1+\|I( X^{\diamond 3} )\|_{d,c-2d}+\| X^{\diamond 2} \|_{-1-\delta})(1+\|\Phi\|_{\star,L,T}).
$$
Hence the following bound holds
\begin{equation}\label{eq:estim deri}
\|\Gamma(\Phi)'\|_{\star,2,L,T}\lesssim T^\theta(1+\|I( X^{\diamond 3} )\|_{d,c-2d}+\| X^{\diamond 2} \|_{-1-\delta})(1+\|\Phi\|_{\star,T}).
\end{equation}
We need to estimate the remaining term $\Gamma(\Phi)^\sharp$. Due to  Propositions~\ref{proposition:int-estim-1},~\ref{proposition:diamond} and  Corollary~\ref{corollary:integ-def-2} it only remains to estimate $I(\Phi^3)$ and $\Psi$. In fact an easy computation gives
$$
\|\Psi\|_{\star,1,L,T}\lesssim \|u^0\|_{-z}.
$$
Let us focus to the term $I(\Phi^3)$. We notice that 
$$
\|I(\Phi^3)(t)\|_{1+\delta}\lesssim\int_0^t\dd s(t-s)^{-(1+\delta-\eta)/2}s^{-3/2(\eta+z)}\|\Phi\|_{\star,T}^3(\| X^{\diamond 2} \|_{-1-\rho}+1)^3,
$$
for $\delta,\kappa>0$ small enough and $z<2/3$. Hence we obtain the existence of some $\theta>0$ such that 
$$
\sup_{t\in[0,T]}t^{(1+\delta+z)/2}\|I(\Phi^3)(t)\|_{1+\delta}\lesssim T^\theta\|\Phi\|_{\star,T}.
$$
A similar argument gives 
$$
\sup_{t\in[0,T]}t^{(1/2+\gamma+z)/2}\|I(\Phi^3)(t)\|_{1/2+\gamma}+\sup_{t\in[0,T]}t^{(\kappa+z)/2}\|I(\Phi^3)(t)\|_{\kappa}\lesssim T^\theta\|\Phi\|^3_{\star,L,T}(1+\| X^{\diamond 2} \|_{-1-\rho})^3.
$$
Let us remark that 
 $$
 \|\Phi_t^3\|_{\eta}\lesssim t^{-3(\eta+z)/2}\|\Phi\|^3_{\star,L,T}(1+\| X^{\diamond 2} \|_{-1-\rho})^3.
 $$
As usual, to deal with the H\"older norms, we begin by writing the following decomposition
$$
\|I(\Phi^3)(t)-I(\Phi^3)(s)\|_{c-2d}\lesssim I^1_{st}+I^2_{st},
$$ 
where 
$$
I^1_{st}=(P_{t-s}-1)\int_0^s\dd uP_{s-u}\Phi^3_u \quad \mathrm{and}\quad I^2_{st}=\int_s^t\dd uP_{t-u}\Phi^3_u.
$$
For $I^1$ it is enough to observe that 
\begin{equation*}
\begin{split}
\|I^1_{st}\|_{c-2d}&\lesssim|t-s|^d\int_0^s\dd u(s-u)^{-(c-\eta)/2}u^{-3/2(z+\eta)}\|\Phi\|^3_{\star,T}(1+\| X^{\diamond 2} \|_{-1-\rho})^3
\\&\lesssim T^{1-(c-\eta)-3/2(z+\eta)}|t-s|^d\|\Phi\|^3_{\star,L,T}(1+\| X^{\diamond 2} \|_{-1-\rho})^3,
\end{split}
\end{equation*}
 for $\eta,c>0$ small enough, $z<2/3$. To obtain the bound for the second term, we use the H\"older inequality and we have 
\begin{equation*}
\begin{split}
\|I^2_{st}\|_{c-2d}&\lesssim|t-s|^d\left(\int_s^t\dd u\|P_{t-u}\Phi^3_u\|^{1/(1-d)}_{c-2d}\right)^{1-d}
\\&\lesssim|t-s|^d\left(\int_s^t\dd u(t-u)^{-\frac{c-2d-\eta}{2-2d}}u^{-\frac{3(z+\eta)}{(2-2d)}}\right)^{1-d}\|\Phi\|^3_{\star,T}(1+\| X^{\diamond 2} \|_{-1-\rho})^3
\\&\lesssim |t-s|^d T^{1-(c-2\eta+3z)/2}\|\Phi\|^3_{\star,T}(1+\| X^{\diamond 2} \|_{-1-\rho})^3,
\end{split}
\end{equation*}
for $c,\eta,d>0$ small enough and $z<2/3$. We can conclude that there exists $\theta>0$ such that 
$$
\sup_{s,t}s^{(z+c)/2}\frac{\|I(\Phi^3)(t)-I(\Phi^3)(s)\|_{c-2d}}{|t-s|^d}\lesssim T^\theta\|\Phi\|^3_{\star,T}(1+\| X^{\diamond 2} \|_{-1-\rho})^3
$$
and we obtain all needed bounds for the remaining term. Hence
\begin{equation*}
\begin{split}
\|\Gamma(\Phi)^\sharp\|_{\star,2,L,T}\lesssim &(T^\theta\|\Phi\|_{\star,T}+1)^3(1+\|\mathbb X\|_{T,K}+\|u^0\|_{-z})^3,
\end{split}
\end{equation*}
for some $K\in[0,1]^4$ depending on $L$, which gives the first bound~\eqref{eq:bound}. The second estimate~\eqref{eq:contarc} is obtained in the same manner. 

Due to the bound~\eqref{eq:bound} for $T_1>T>0$ small enough, there exists $R_T>0$ such that the set $B_{R_{T}}:=\left\{\Phi\in\mathcal D_{\mathbb X,T}^L; \|\Phi\|_{\star,T}\leq R_T\right\}$ is invariant by the map $\Gamma$. The bound~\eqref{eq:contarc} tells us that $\Gamma$ is a contraction on $B_{R_{T_2}}$ for $0<T_2<T_1$ small enough. Then by the usual fixed point theorem, there exists $\Phi\in\mathcal D_{\mathbb X,T_2}^L$ such that $\Gamma(\Phi)=\Phi$. The uniqueness is obtained by a standard argument.
 \end{proof}
A quick adaptation of the last proof gives a better result (see for example \cite{gubinelli_controlling_2004} and the continuity result theorem). In fact the flow is continuous with respect to the rough distribution  $\XX$ and with respect to the initial condition $\psi$ (or $u^0$).
\begin{proposition}\label{proposition:cont}
Let $\XX$ and $\mathbb Y$ be two rough distributions such that $\mathbb \|X\|_{T,K},\mathbb \|Y\|_{T,K}\leq R$, $z\in(-2/3,-1/2)$, $u^0_X$ and $u^0_Y$ be two initial conditions and $\Phi^X\in\mathcal D^L_{T^X,X}$ and $\Phi^Y\in\mathcal D^L_{T^Y,Y}$ be the two unique solutions of the equations associating to $\mathbb X$ and $\mathbb Y$, and $T_X$ and $T_Y$ be their respective living times. For $T^\star=\inf\{T_X,T_Y\}$ the following bound holds
\[\|\Phi^X-\Phi^Y\|_{C([0,T],\mathcal C^{-z}(\mathbb T^3))}\lesssim d_{T,L}(\Phi^X,\Phi^Y)\lesssim_{R} \mathbf d_{T,K}(\XX,\mathbb Y) + \|u^0_X-u^0_Y\|_{-z}, \]
for every $T\leq T^\star$, where $d$ is defined in Definition~\ref{def:controlled-distrib} and $\mathbf d$ is defined in Defintion~\ref{hyp:renormalization1}.
\end{proposition}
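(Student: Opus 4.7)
The plan is to mimic the standard Lipschitz-dependence argument for fixed-point solutions of contractions. Writing $\Gamma_{\mathbb X, u^0}$ for the map of Theorem~\ref{theorem:FP}, we have $\Phi^X=\Gamma_{\mathbb X,u_X^0}(\Phi^X)$ and $\Phi^Y=\Gamma_{\mathbb Y,u_Y^0}(\Phi^Y)$, so we split
\[
\Phi^X-\Phi^Y \;=\; \bigl[\Gamma_{\mathbb X,u_X^0}(\Phi^X)-\Gamma_{\mathbb X,u_X^0}(\Phi^Y)\bigr] \;+\; \bigl[\Gamma_{\mathbb X,u_X^0}(\Phi^Y)-\Gamma_{\mathbb Y,u_Y^0}(\Phi^Y)\bigr].
\]
The first bracket is bounded through the contraction estimate \eqref{eq:contarc} by $C T^{\theta}(1+R+\|u_X^0\|_{-z})^5\, d_{T,L}(\Phi^X,\Phi^Y)$, and for $T\leq T^\star$ small enough (depending on $R$) the prefactor can be made $\leq 1/2$, which will allow us to absorb half of $d_{T,L}(\Phi^X,\Phi^Y)$ on the left.

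The heart of the argument is to bound the second bracket by $\mathbf d_{T,K}(\mathbb X,\mathbb Y)+\|u_X^0-u_Y^0\|_{-z}$ uniformly for controlled distributions of norm $\leq R$. This requires revisiting each piece appearing in the definition of $\Gamma$ and redoing the proofs of Proposition~\ref{proposition:int-estim-1}, Proposition~\ref{proposition:diamond} and Corollary~\ref{corollary:integ-def-2} as multilinear-in-difference estimates, using the identity $AB-CD=(A-C)B+C(B-D)$ systematically to distribute differences across the area components. Concretely, $\|\Psi^X-\Psi^Y\|_{\star,1,L,T}\lesssim \|u_X^0-u_Y^0\|_{-z}$ by direct heat-kernel estimates; the linear terms $I(X^{\Diamond 3})-I(Y^{\Diamond 3})$ and $B_{0\Diamond}(I(X^{\Diamond 3}),X^{\Diamond 2})-B_{0\Diamond}(I(Y^{\Diamond 3}),Y^{\Diamond 2})$ are controlled by the metric $\mathbf d_{T,K}$ component by component; the term $I(\Phi^Y(X^{\Diamond 2}\!-\!Y^{\Diamond 2}))$ and the analogous commutator and $R$-type pieces inside $X^{\Diamond}(\Phi^{Y\prime})-Y^{\Diamond}(\Phi^{Y\prime})$ are handled exactly by the same bounds as in the proof of Proposition~\ref{proposition:diamond}, with one of the factors $X^{\Diamond 2}$ replaced by $X^{\Diamond 2}-Y^{\Diamond 2}$ (and similarly for $\pi_{0\Diamond}(I(X^{\Diamond 2}),X^{\Diamond 2})-\varphi^X$ and $\varphi^X$); and the purely polynomial term $I((\Phi^Y)^3)$ contributes nothing since it only depends on $\Phi^Y$. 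Each of these bounds carries a small positive power $T^\theta$, so for $T$ small depending on $R$ they combine to give
\[
d_{T,L}\bigl(\Gamma_{\mathbb X,u_X^0}(\Phi^Y),\Gamma_{\mathbb Y,u_Y^0}(\Phi^Y)\bigr)\;\lesssim_R\; \mathbf d_{T,K}(\mathbb X,\mathbb Y)+\|u_X^0-u_Y^0\|_{-z}.
\]

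Combining the two brackets and absorbing the contraction piece yields the middle inequality. For the left-most inequality, one uses the controlled decomposition $\Phi^X-\Phi^Y = [I(X^{\Diamond 3})-I(Y^{\Diamond 3})] + [B_<(\Phi^{X\prime},X^{\Diamond 2})-B_<(\Phi^{Y\prime},Y^{\Diamond 2})] + [(\Phi^X)^\sharp-(\Phi^Y)^\sharp]$ and estimates each summand in $\mathcal C^{-z}$ via Proposition~\ref{proposition:Bony-estim} and the embedding of Proposition~\ref{proposition:Bes-emb}: the first is trivially controlled by $\mathbf d_{T,K}$, the paraproduct piece by $\|\Phi^{X\prime}-\Phi^{Y\prime}\|_{\star,2,L,T}\|X^{\Diamond 2}\|_{-1-\rho}+\|\Phi^{Y\prime}\|_{\star,2,L,T}\mathbf d_{T,K}(\mathbb X,\mathbb Y)$, and the remainder by $\|(\Phi^X)^\sharp-(\Phi^Y)^\sharp\|_{\star,1,L,T}$, all of which are bounded by $d_{T,L}(\Phi^X,\Phi^Y)$ up to a constant depending on $R$.

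The main obstacle is the careful bookkeeping of the many bilinear-in-difference estimates for the most singular piece $X^{\Diamond}(\Phi^\prime)$, in particular for the renormalized term involving $\pi_{0\Diamond}(I(X^{\Diamond 2}),X^{\Diamond 2})-\varphi^X$ and for the commutator term $R^2$, where one must redistribute differences without losing the small factor $T^\theta$ that drives the contraction; once these are handled, the rest of the argument is a routine adaptation of the proof of Theorem~\ref{theorem:FP}.
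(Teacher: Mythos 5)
Your proposal is correct and is essentially the argument the paper intends: the paper gives no written proof of Proposition~\ref{proposition:cont}, stating only that it follows by ``a quick adaptation of the last proof'' (i.e.\ of Theorem~\ref{theorem:FP}), and your splitting $\Phi^X-\Phi^Y=[\Gamma_{\mathbb X,u_X^0}(\Phi^X)-\Gamma_{\mathbb X,u_X^0}(\Phi^Y)]+[\Gamma_{\mathbb X,u_X^0}(\Phi^Y)-\Gamma_{\mathbb Y,u_Y^0}(\Phi^Y)]$, with the contraction bound \eqref{eq:contarc} absorbing the first bracket and multilinear-in-difference versions of Propositions~\ref{proposition:int-estim-1}, \ref{proposition:diamond} and Corollary~\ref{corollary:integ-def-2} controlling the second, is exactly that adaptation, spelled out in more detail than the paper itself. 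The only point worth noting is that the absorption step as you state it yields the bound for $T$ small depending on $R$, whereas the statement claims it for every $T\leq T^\star$; neither you nor the paper addresses the (standard but not entirely free) gluing over subintervals needed to remove that restriction.
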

Hence, using this result and combining it with the convergence Theorem~\ref{theorem:reno-1} , we have this second corollary, where the convergence of the approximated equation is proved.
\begin{corollary}
Let $z\in(1/2,2/3)$, $u^0\in \mathcal C^{-z}$ and let us denote by $u^\eps$ the unique solution (with life time $T^\eps$) of the equation 
$$
\partial_tu^\eps=\Delta u^\eps-(u^\eps)^3+C^{\eps}u^{\eps}+\xi^\eps,
$$
where $\xi^\eps$ is a mollification of the space-time white noise $\xi$ and $C^\eps=3(C_1^\eps-3C_2^\eps)$ where $C_1^\eps$ and $C_2^\eps$ are the constants given in Definition~\ref{def:constant}. Let
us introduce $u=X+\Phi$ where $\Phi$ is the local solution with life-time $T>0$ for the fixed point equation given in the Theorem~\ref{theorem:FP}. Then we have the following convergence result 
$$
\mathbb P(d_{T^\star,L}(\Phi^\eps,\Phi)>\lambda)\longrightarrow_{\eps\to0}0,
$$
for all $\lambda>0$ with $T^\star=\inf(T,T^\eps)$ and $\Phi^\eps=u^\eps-X^\eps\in\mathcal D_{X^\eps,T}^L$. 
\end{corollary}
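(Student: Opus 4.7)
The plan is to identify $\Phi^\eps := u^\eps - X^\eps$ with the paracontrolled fixed point driven by the smooth enhancement $\mathbb X^\eps := R^{\varphi^\eps}_{C_1^\eps,C_2^\eps}\mathbf X^\eps$, then to apply the continuity of Proposition~\ref{proposition:cont} together with the probabilistic convergence $\mathbb X^\eps \to \mathbb X$ supplied by Theorem~\ref{theorem:main-theo-2}.

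First I would check that $\Phi^\eps \in \mathcal D^L_{T^\eps, \mathbb X^\eps}$. Since $\xi^\eps$ is smooth, $u^\eps$ is a classical solution, and the algebraic manipulation leading to~\eqref{eq:renormalized approximated} shows that
\begin{equation*}
\Phi^\eps = \Psi + I((X^\eps)^{\Diamond 3}) + 3I((\Phi^\eps)^2 X^\eps) + 3I(\Phi^\eps \Diamond (X^\eps)^{\Diamond 2}) + I((\Phi^\eps)^3),
\end{equation*}
where the last consistency identities in Proposition~\ref{proposition:int-estim-1} and Corollary~\ref{corollary:integ-def-2} ensure that the abstract bilinear operators evaluated at $\mathbb X^\eps$ coincide with these classical integrals. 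Setting $(\Phi^\eps)' = 3\Phi^\eps$ and $(\Phi^\eps)^\sharp := \Phi^\eps - I((X^\eps)^{\Diamond 3}) - B_{<}(3\Phi^\eps, (X^\eps)^{\Diamond 2})$, the paracontrolled structure is automatic, and by the uniqueness statement of Theorem~\ref{theorem:FP} the distribution $\Phi^\eps$ coincides with the fixed point of $\Gamma$ built from $\mathbb X^\eps$ and $u^0$ on $[0,T^\eps]$.

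Next I would apply Proposition~\ref{proposition:cont} with $(\mathbb Y, u^0_Y) = (\mathbb X^\eps, u^0)$ and $(\mathbb X, u^0_X) = (\mathbb X, u^0)$. On the event
\begin{equation*}
\Omega_R := \{\|\mathbb X\|_{T,K} \leq R\} \cap \{\|\mathbb X^\eps\|_{T,K} \leq R\},
\end{equation*}
this yields a deterministic bound $d_{T^\star, L}(\Phi^\eps, \Phi) \leq C_R\, \mathbf d_{T, K}(\mathbb X^\eps, \mathbb X)$. For any $\lambda > 0$, splitting according to $\Omega_R$ gives
\begin{equation*}
\mathbb P\bigl(d_{T^\star, L}(\Phi^\eps, \Phi) > \lambda\bigr) \leq \mathbb P(\Omega_R^c) + \mathbb P\bigl(C_R\, \mathbf d_{T, K}(\mathbb X^\eps, \mathbb X) > \lambda\bigr).
\end{equation*}
By Theorem~\ref{theorem:main-theo-2}, $\mathbb X^\eps \to \mathbb X$ in $L^p(\Omega, \mathcal X)$, so the second term vanishes as $\eps \to 0$ for every fixed $R$, while tightness of $(\|\mathbb X^\eps\|_{T,K})_\eps$ gives $\mathbb P(\Omega_R^c) \to 0$ uniformly in $\eps$ as $R \to \infty$. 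Letting first $\eps \to 0$ and then $R \to \infty$ concludes.

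The hard part is the first step: one must check that the paracontrolled bound~\eqref{eq:bound} applied to $\mathbb X^\eps$ is controlled only by $\|\mathbb X^\eps\|_{T,K}$ and $\|u^0\|_{-z}$, uniformly in $\eps$, so that the contraction does not degenerate and $T^\eps$ does not collapse to $0$ on $\Omega_R$. This uniformity is precisely what Theorem~\ref{theorem:FP} provides, since the constants there depend on $\mathbb X^\eps$ only through $\|\mathbb X^\eps\|_{T,K}$; this is what makes the comparison on $[0, T^\star]$ meaningful and the splitting above effective.
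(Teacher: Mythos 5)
Your proposal is correct and follows essentially the same route as the paper, which derives this corollary directly from the consistency identities in Proposition~\ref{proposition:int-estim-1} and Corollary~\ref{corollary:integ-def-2} (to identify $\Phi^\eps$ with the fixed point driven by $R^{\varphi^\eps}_{C_1^\eps,C_2^\eps}\mathbf X^\eps$), the local Lipschitz continuity of Proposition~\ref{proposition:cont}, and the $L^p$ convergence of the enhanced data from Theorem~\ref{theorem:reno-1}. Your localization on the event $\Omega_R$ followed by the limits $\eps\to0$ then $R\to\infty$ is the standard way to pass from the deterministic continuity estimate to convergence in probability, and is exactly what the paper leaves implicit.
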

\section{Renormalization and construction of the rough distribution }\label{section:renormalization}
To end the proof of existence and uniqueness for the renormalized equation, we need to prove that the O.U. process associated to the white noise can be extended to a rough distribution in the space $\mathcal X$ (see Definition~\ref{hyp:renormalization1}). As explained above, to define the appropriate process we proceed by regularization and renormalization. Let us take a \emph{a smooth radial function $f$ with compact support and such that $f(0)=1$}. We regularize $X$ in the following way 
\[
X_t^{\eps}=\sum_{k\ne0}f(\eps k)\hat X_t(k)e_k
\]
and  we show that we can choose two divergent constants $C_1^\eps,C_2^\eps\in\mathbb R^+$ and a smooth function $\varphi^\eps$ such that  $R^{\varphi^\eps}_{C_1^\eps,C_2^\eps}\mathbf{X^\eps}:= X^\eps$  converges in $\mathcal X$. As it has been noticed in the previous sections, without a renormalization procedure there is no finite limit for such a process.
\begin{notation}
Let $k_1,. . .,k_n\in\mathbb Z^3$ we denote by $k_{1,. . .,n}=\sum_{i=1}^nk_i$, and for a function $f$ we denote by $\der f$ the increment of the function given by 
$\der f_{st}=f_t-f_s$. 
\end{notation}
\begin{definition}\label{def:constant}
Let 
\[C_1^\eps=\mathbb E\left[(X^\eps)^2\right]\]
and 
\[C^\varepsilon_2=2\sum_{k_1\neq0,k_2\neq0}\frac{|f(\varepsilon k_1)|^2|f(\varepsilon k_2)|^2}{|k_1|^2|k_2|^2(|k_1|^2+|k_2|^2+|k_{1,2}|^2)}.\]
Let us notice that thanks to the definition of the Littlewood-Paley blocs, we can also choose to write $C^\eps_2$ as
\[C^\varepsilon_2=2\sum_{|i-j|\le1}\sum_{k_1\neq0,k_2\neq0}\theta(2^{-i}|k_{1,2}|)\theta(2^{-j} |k_{1,2}|)\frac{f(\varepsilon k_1)f(\varepsilon k_2)}{|k_1|^2|k_2|^2(|k_1|^2+|k_2|^2+|k_{1,2}|^2)}.\]
Let us define the following renormalized quantities
\begin{align*}
				    (X^\varepsilon)^{\diamond 2} &:= (X^{\varepsilon})^{2} - C^\varepsilon_1,\\
				    I((X^\varepsilon)^{\diamond 3})&:=I( (X^\varepsilon)^{3} - 3C^\varepsilon_1 X^\varepsilon),\\
				    (I((X^\varepsilon)^{\diamond 2})\circ (X^\varepsilon)^{\diamond 2})^{\diamond}&=I((X^\varepsilon)^{\diamond 2})\circ (X^\varepsilon)^{\diamond 2}) -C^\varepsilon_2\\
				   \intertext{and}
				    (I((X^\varepsilon)^{\diamond 3})\circ (X^\varepsilon)^{\diamond 2})^{\diamond}
				  &=I((X^\varepsilon)^{\diamond 3})\circ (X^\varepsilon)^{\diamond 2}-3C^\eps_2 X^{\eps}.
\end{align*}
\end{definition}
Then the following theorem holds:
\begin{theorem}\label{theorem:reno-1}
For $T>0$, there exists a deterministic sequence of functions $\varphi^\eps : [0,T]\to\RR$, a deterministic distribution $\varphi : [0,T]\to \RR$ such that for all $\delta,\delta',\nu, \rho>0$ small enough with $\nu>\rho$ we have
$$\|\varphi\|_{\nu,\rho,T}=\sup_t t^{\nu}|\varphi_t|+\sup_{0\le s<t\le T} s^{\nu}\frac{|\varphi_t-\varphi_s|}{|t-s|^{\rho}}<+\infty$$
and the sequence $\varphi^\eps$ converges to $\varphi$ according to that norm, that is
$$\|\varphi^\eps-\varphi\|_{1,\star,T}\to 0.$$
Furthermore there exists some stochastic processes
\[X^{\diamond 2} \in\mathcal C([0,T],\mathcal C^{-1-\delta}),\] 
\[I(X^{\diamond3})\in \mathcal C^{\delta'}([0,T],\mathcal C^{1/2-\delta-2\delta'}),\]
\[I( X^{\diamond 3} )\circ X\in\mathcal  C^{\delta'}([0,T],\mathcal C^{-\delta-2\delta'}),\]
\[(I( X^{\diamond 2} )\circ X^{\diamond 2})^{\diamond}-\varphi \in \mathcal C^{\delta'}([0,T],\mathcal C^{-\delta-2\delta'}),\]
and
\[(I(X^{\diamond 3})\circ X^{\diamond 2})^{\diamond}-3\varphi X \in \mathcal C^{\delta'}([0,T],\mathcal C^{-1/2-\delta-2\delta'}),\]
such that each component of the sequence $\XX^\eps$ converges respectively to  the corresponding component of the rough distribution  $\XX$ in the good topology, that is for all $\delta, \delta' >0$ small enough, and all $p>1$,
\begin{equation}\label{eq:cv_X}
X^\eps\to X\mathrm{\ in\ } L^p(\Omega,\mathcal C^{\delta'}([0,T],\mathcal C^{-1-\delta-3\delta'-3/2p})),
\end{equation}
\begin{equation}\label{eq:cv_X2}
(X^\eps)^{\diamond 2}\to X^{\diamond 2} \mathrm{\ in \ } L^p(\Omega,\mathcal C^{\delta'}([0,T],\mathcal C^{-1-\delta-3\delta'-3/{2p}})),
\end{equation}
\begin{equation}\label{eq:cv_X3}
I((X^\eps)^{\diamond 3})\to I(X^{\diamond 3}) \mathrm{\ in \ } L^p(\Omega,\mathcal C^{\delta'}([0,T],\mathcal C^{1/2-\delta-3\delta'-3/{2p}})),
\end{equation}
\begin{equation}\label{eq:cv_I(X3)X}
I((X^\eps)^{\diamond 3})\circ X^\eps \to I(X^{\diamond 3})\circ X \mathrm{\ in \ } L^p(\Omega,\mathcal C^{\delta'}([0,T],\mathcal C^{-\delta-3\delta'-3/{2p}})),
\end{equation}
\begin{equation}\label{eq:cv_I(X2)X2}
(I((X^\eps)^{\diamond 2})\circ(X^\eps)^{\diamond 2})^{\diamond}-\varphi^\eps \to (I(X^{\diamond 2})\circ X^{\diamond 2})^{\diamond} -\varphi \mathrm{\ in \ } L^p(\Omega,\mathcal C^{\delta'}([0,T],\mathcal C^{-\delta-3\delta'-3/{2p}}))
\end{equation}
and
\begin{equation}\label{eq:cv_I(X3)X2}
(I((X^\eps)^{\diamond 3})\circ(X^\eps)^{\diamond 2})^{\diamond} - 3 \varphi^\eps X^{\eps} \to(I(X^{\diamond 3})\circ X^{\diamond 2})^{\diamond}-3\varphi X \mathrm{\ in \ } L^p(\Omega,\mathcal C^{\delta'}([0,T],\mathcal C^{-1/2-\delta-3\delta'-3/{2p}})).
\end{equation}
\end{theorem}
\begin{remark}
Thanks to the proof below (especially  Subsections~\ref{subsec:Renor-I(X2)X2} and~\ref{subsec:Renor-I(X3)X2}) we have the following expressions for $\varphi^\eps$ and $\varphi$.
\[\varphi^\eps_t =-\sum_{|i-j|\leq1}\sum_{k_1\neq0 , k_2\neq 0}\frac{|\theta(2^{-i}|k_{12}|)\|\theta(2^{-j}|k_{12}|)\|f(\eps k_1)f(\eps k_2)|}{|k_1|^2|k_2|^2(|k_1|^2 + |k_2|^2 + |k_1+k_2|^2)}\exp\left(-t(|k_1|^2 + |k_2|^2 + |k_1+k_2|^2)\right)\]
and
\[\varphi_t =-\sum_{|i-j|\leq1}\sum_{k_1\neq0 , k_2\neq 0}\frac{|\theta(2^{-i}|k_{12}|)\|\theta(2^{-j}|k_{12}|)|}{|k_1|^2|k_2|^2(|k_1|^2 + |k_2|^2 + |k_1+k_2|^2)}\exp\left(-t(|k_1|^2 + |k_2|^2 + |k_1+k_2|^2)\right).\]
\end{remark}
We split the proof of this theorem according to the various components. We start by the convergence of $X^\eps$ to $X$. We  also give a full proof for $X^{\diamond 2}$. For the other components we only give the crucial estimates.
\subsection{Convergence to \texorpdfstring{$X$}{X}}
We start by an easy computation for the convergence of $X^\eps$
\begin{proof}[Proof of~\eqref{eq:cv_X}]
By a quick computation we have that 
$$
\der (X-X^\eps)_{st}=\sum_{k}(f(\eps k)-1)\der \hat X_{st}(k)e_k.
$$
Then 
$$
\mathbb E\left[|\Delta_{q}\der(X-X^\eps)_{st}|^2\right]=2\sum_{k\ne0;|k|\sim 2^q}|f(\eps k)-1|^2\frac{1-e^{-|k|^2|t-s|}}{|k|^2}\lesssim_{h,\rho}c(\eps)2^{q(1+2h+\rho)}|t-s|^h,
$$
for $h,\rho>0$ small enough, and $c(\eps)=\sum_{k\ne0}|k|^{-3-\rho}|f(\eps k)-1|^2$. The Gaussian Hypercontractivity gives  for $p>2$,
$$
\mathbb E\left[\|\Delta_q\der(X-X^\eps)_{st}\|^p_{L^p}\right]\lesssim_p\int_{\mathbb T^3}\mathbb E\left[|\Delta_q\der (X-X^\eps)_{st}(x)|^2\right]^{p/2}\dd x\lesssim_{\rho,h}c(\eps)^p|t-s|^{hp/2}2^{qp/2(2h+\rho+1)}.
$$
We obtain that 
$$
\mathbb E\left[\|\der (X-X^\eps)_{st}\|^{p}_{B^{-1/2-\rho-h}_{p,p}}\right]\lesssim c(\eps)^{p/2}|t-s|^{hp/2}.
$$
Using the Besov embedding (Proposition~\ref{proposition:Bes-emb}) we have 
$$
\mathbb E\left[\|\der (X-X^\eps)_{st}\|^{p}_{\mathcal C^{-1/2-\rho-h-3/p}}\right]\lesssim c(\eps)^{p/2}|t-s|^{hp/2}
$$
and by the standard Garsia-Rodemich-Rumsey Lemma (see~\cite{grr}) we finally obtain 
$$
\mathbb E\left[\|X-X^\eps\|_{\mathcal C^{h-\theta}([0,T],\mathcal C^{-1/2-h-\rho-3/p})}\right]\lesssim c(\eps)^p,
$$
for all $h>\theta>0$, $\rho>0$ small enough and $p>2$. Moreover we have $X_0=X^\eps_0=0$ and by using the fact that $c(\eps)\to^{\eps\to0}0$, we obtain that 
$$
\lim_{\eps\to0}\|X^\eps-X\|_{L^p(\Omega,\mathcal C_T^{\delta',-1/2-\delta-3/p})}=0,
$$ 
for all $0<\delta'<\delta/3$ and $T>0$.
\end{proof}
 \subsection{Renormalization for \texorpdfstring{$X^2$}{X2}}
To prove the theorem for $X^{\diamond 2}$ we first prove the following estimate, and we use the Garsia-Rodemich-Rumsey lemma to conclude.
\begin{proposition} \label{prop:holder estimate X2}
Let $p>1$, $\theta>0$ be small enough, then the following bounds hold 
$$
\sup_{\eps}\mathbb E\left[\|\Delta_q\der (X^\eps)^{\diamond 2}_{st}\|^{2p}_{L^{2p}}\right]\lesssim_{p,\theta}|t-s|^{p\theta}2^{2qp(1+2\theta)}
$$
and 
$$
\mathbb E\left[\|\Delta_q\left(\der (X^\eps)^{\diamond 2}_{st}-\der (X^{\eps'})^{\diamond 2}_{st}\right))\|^{2p}_{L^{2p}}\right]\lesssim_{p,\theta}C(\eps,\eps')^p|t-s|^{2p\theta}2^{2qp(1+\theta)},
$$
where $C(\eps,\eps')\to0$ when $|\eps-\eps'|\to0$.
\end{proposition}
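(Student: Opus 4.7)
The strategy is to exploit the fact that $(X^\eps)^{\Diamond 2}$ lives in the second Wiener chaos of the driving white noise, so that Nelson's Gaussian hypercontractivity reduces the bound on $L^{2p}$-moments to a bound on the variance $\mathbb E[|\Delta_q \der (X^\eps)^{\Diamond 2}_{st}(x)|^2]$, which is independent of $x$ by stationarity. Integration over $\mathbb T^3$ then costs only a constant, and we are left with a purely analytic (deterministic) Fourier computation for which the exponents in $|t-s|$ and $2^q$ can be read off directly. The second inequality is proved by the same scheme, with the quadratic factor $|f(\eps k_1)f(\eps k_2)|^2$ replaced by the difference $|f(\eps k_1)f(\eps k_2)-f(\eps'k_1)f(\eps'k_2)|^2$ and dominated convergence used to produce the vanishing constant $C(\eps,\eps')$.

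Concretely, writing the chaos expansion
$$
\Delta_q (X^\eps_t)^{\Diamond 2}(x)=\sum_{k_1,k_2\neq 0,\ k_1+k_2\neq 0}\rho_q(k_1+k_2)f(\eps k_1)f(\eps k_2)\hat X_t(k_1)\hat X_t(k_2)e^{i(k_1+k_2)\cdot x},
$$
a direct application of Wick's formula combined with the stationary covariance $\mathbb E[\hat X_u(k)\hat X_v(-k)]=|k|^{-2}e^{-|k|^2|u-v|}$ yields
$$
\mathbb E[|\Delta_q \der (X^\eps)^{\Diamond 2}_{st}(x)|^2]\lesssim \sum_{k}|\rho_q(k)|^2\!\!\sum_{k_1+k_2=k}\frac{|f(\eps k_1)|^2|f(\eps k_2)|^2\bigl(1-e^{-(|k_1|^2+|k_2|^2)|t-s|}\bigr)}{|k_1|^2|k_2|^2}.
$$
Using $1-e^{-u}\le u^{\theta}$ and $|f|\le C$ gives an extra factor $|t-s|^{\theta}(|k_1|^2+|k_2|^2)^\theta$; the standard three-dimensional convolution estimate $\sum_{k_1+k_2=k}|k_1|^{-a}|k_2|^{-b}\lesssim |k|^{3-a-b}$ (for $a,b<3$, $a+b>3$) applied with $a=2-2\theta$, $b=2$ then produces $|k|^{-1+2\theta}$, and summing over $|k|\sim 2^q$ yields the bound $|t-s|^\theta 2^{q(2+2\theta)}$. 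Applying Nelson's hypercontractivity on the second Wiener chaos,
$\mathbb E[|\Delta_q \der (X^\eps)^{\Diamond 2}_{st}(x)|^{2p}]\lesssim_p \mathbb E[|\Delta_q \der (X^\eps)^{\Diamond 2}_{st}(x)|^{2}]^p$, and integrating over $x$ (whose result is $x$-independent) produces the stated inequality after a cosmetic renaming of $\theta$.

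The two main technical points will be: keeping track of the power of $2^q$ through the convolution estimate and the summation over $|k|\sim 2^q$ so as to reach exactly $2^{2qp(1+2\theta)}$ (rather than the worse $2^{2qp(1+2\theta)+\eps}$ which one would naively obtain), and verifying the uniformity in $\eps$ for the first bound together with the $\eps,\eps'\to 0$ vanishing for the second. For the latter, writing $f(\eps k_1)f(\eps k_2)-f(\eps' k_1)f(\eps' k_2)$ as a telescoping sum and using the Lipschitz estimate $|f(\eps k)-f(\eps' k)|\lesssim |\eps-\eps'|\,|k|\wedge 1$, one obtains an additional factor which, together with a small loss in $|k_1|,|k_2|$, still fits into the same convolution bound (with the exponent $\theta$ playing a double role as regularity budget and as slack for the $|k|$-loss). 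Passing to the $L^{2p}$-moment via hypercontractivity as above finishes the proof; the Garsia--Rodemich--Rumsey lemma will then be invoked outside this proposition to upgrade these two estimates into a convergence statement in the Besov--H\"older space appearing in Theorem~\ref{theorem:reno-1}.
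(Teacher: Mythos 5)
Your proposal is correct and follows essentially the same route as the paper: a second-chaos/Wick computation of the variance of the increment, the interpolation bound $1-e^{-u}\le u^{\theta}$ combined with the standard convolution estimate on the Fourier sums, Gaussian hypercontractivity plus integration over the (spatially stationary) torus to pass to $L^{2p}$, and dominated convergence in the $f(\eps\cdot)$ coefficients for the Cauchy estimate. The only differences are cosmetic: the paper organizes the increment through $(X^\eps_t-X^\eps_s)X^\eps_s$ rather than the full increment kernel, and lands on a slightly different (equivalent after relabelling $\theta$) exponent.
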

\begin{proof}
By a straighforward computation we have 
\begin{equation}\label{eq:var1}
\begin{split}
 \var(\Delta_q((X^\eps_t-X^\eps_s)X^\eps_s))=&\sum_{k,k'\in\mathbb Z^3}\theta(2^{-q}k)\theta(2^{-q}k')\sum_{k_{12}=k;k'_{12}=k'}f(\eps k_1)f(\eps k_2)f(\eps k'_1)f(\eps k'_2)
 \\&\times(I^1_{st}+I^2_{st})e_ke_{-k'},
 \end{split}
\end{equation}
where $(e_k)$ denotes the Fourier basis of $L^2(\mathbb T^3)$ and 
$$
I^{1}_{st}=\mathbb E\left[(\hat X_t(k_1)-\hat X_s(k_1))(\overline{\hat X_t(k'_1)-\hat X_s(k'_1)})\right] \mathbb E\left[\hat X_s(k_2)\overline X_s(k'_2)\right]=2\der_{k_1=k'_1}\der_{k_2=k'_2}\frac{1-e^{-|k_1|^2|t-s|}}{|k_1|^2|k_2|^2}
$$ 
and
\begin{equation*}
\begin{split}
I^{2}_{st}&=\mathbb E\left[(\hat X_t(k_1)-\hat X_s(k_1))\overline{\hat X_{s}(k'_2)}\right]\mathbb E\left[(\overline{\hat X_t(k'_1)-\hat X_s(k'_1)}) \hat X_{s}(k_2)\right]
\\&=\der_{k_1=k_2'}\der_{k'_1=k_2}\frac{(1-e^{-|k_1|^2|t-s|})(1-e^{-|k_2|^2|t-s|})}{|k_1|^2|k_2|^2}.
\end{split}
\end{equation*}
Injecting these two identities in Equation~\eqref{eq:var1} we obtain  
\begin{equation}
\begin{split}
 \var(\Delta_q((X^\eps_t-X^\eps_s)X^\eps_s))&\lesssim\sum_{\substack{|k|\sim 2^q\\k_{12}=k}}\frac{1-e^{-|k_1|^2|t-s|}}{|k_1|^2|k_2|^2}+\sum_{\substack{|k|\sim2^q\\k_{12}=k}} \frac{(1-e^{-|k_1|^2|t-s|})(1-e^{-|k_2|^2|t-s|})}{|k_1|^2|k_2|^2}
 \\&\lesssim \sum_{\substack{|k|\sim 2^q\\k_{12}=k}}\frac{1-e^{-|k_1|^2|t-s|}}{|k_1|^2|k_2|^2}.
 \end{split}
\end{equation}
We have 
\begin{equation*}
\begin{split}
\sum_{\substack{|k|\sim 2^q\\k_{12}=k,|k_1|\leq|k_2|}}\frac{1-e^{-|k_1|^2|t-s|}}{|k_1|^2|k_2|^2}\lesssim&|t-s|^\theta\sum_{k\in\mathbb Z^3;|k|\sim2^q,k_{12}=k}|k_1|^{-2+2\theta}|k_2|^{-2}
\\\lesssim & |t-s|^\theta\left\{\sum_{\substack{|k|\sim2^q,k_{12}=k,\\|k_1|\leq|k_2|}}|k_1|^{-2+2\theta}|k_2|^{-2}
+\sum_{\substack{|k|\sim2^q,k_{12}=k\\|k_1|\geq|k_2|}}|k_1|^{-2+2\theta}|k_2|^{-2}\right\}
\\\lesssim& |t-s|^{\theta}2^{2q(1+2\theta)}\left(\sum_{k_1}|k_1|^{-3-2\theta}+\sum_{k_1}|k_2|^{-3-4\theta}\right)<+\infty
\end{split}
\end{equation*}
and by the Gaussian hypercontractivity we finally have
$$
\mathbb E\left[\|\Delta_q\der (X^\eps)^{\diamond 2}_{st}\|_{L^{2p}}^{2p}\right]=\int_{\mathbb T^{3}}(Var\left(\der(X^\eps)^{\diamond 2}_{st}\right)(\xi))^p\dd\xi\lesssim|t-s|^{p\theta}2^{2qp(1+2\theta)}.
$$ 
For the second assertion we see that the computation of the beginning gives
$$
\var ((\Delta_q((X^\eps_t-X^\eps_s)X^\eps_s)-(X^\eps_t-X^\eps_s)X^\eps_s))\lesssim|t-s|^{\theta}2^{2q(1+3\theta)}C(\eps,\eps'),
$$
where
$$
C(\eps,\eps')=\sum_{k_{12}=k}(|f(\eps k_1)|^2|f(\eps k_2)|^2-|f(\eps' k_1)|^2|f(\eps' k_2)|^2)|k|^{-3-\theta}|k_1|^{-3-2\theta}\to^{|\eps-\eps'|\to0}0
$$
thanks to the dominated convergence theorem. Once again the Gaussian hypercontractivity gives us the needed bounds.  
\end{proof}
Using the Besov embedding (Proposition~\ref{proposition:Bes-emb})  combined with the standard Garsia-Rodemich-Rumsey lemma (see~\cite{grr}) the following convergence result holds.
\begin{proposition}\label{propsoition:renorm-1}
Let $\theta,\delta,\rho>0$ be small enough such that $\rho<\theta/2$ and $p>1$ then the following bound holds:
$$
\mathbb E\left[\|(X^\eps)^{\diamond 2}-(X^{\eps'})^{\diamond 2}\|^{2p}_{\mathcal C^{\theta/2-\rho}([0,T],\mathcal C^{-1-3/(2p)-\delta-2\theta}})\right]\lesssim_{\theta,p,\delta} C(\eps,\eps')^p.
$$
Since $(X_0^\eps)^{\diamond2}=0$ and $(X^{\diamond2})_0=0$,  the sequence $ (X^\eps)^{\diamond 2}$ converges in $L^{2p}(\Omega,C^{\theta/2-\rho}([0,T],\mathcal C^{-1-3/(2p)-\delta-3\theta}))$ to a random field denoted by $ X^{\diamond 2} $. 
\end{proposition}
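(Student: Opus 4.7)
The plan is to bootstrap the Littlewood--Paley block estimates already established in Proposition~\ref{prop:holder estimate X2} into a full Besov--Hölder estimate via a \emph{sum-then-embed-then-GRR} argument, and then read off convergence from the resulting Cauchy property.

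First, starting from the $L^{2p}$ bound on $\Delta_q\der((X^\eps)^{\Diamond 2}-(X^{\eps'})^{\Diamond 2})_{st}$, I would sum the dyadic blocks weighted by $2^{2pqs}$ to obtain a moment bound on the full Besov norm: by the definition of $B^{s}_{2p,2p}$,
\[
\mathbb E\!\left[\|\der((X^\eps)^{\Diamond 2}-(X^{\eps'})^{\Diamond 2})_{st}\|^{2p}_{B^{s}_{2p,2p}}\right]
\lesssim C(\eps,\eps')^p\,|t-s|^{2p\theta}\sum_{q\geq-1} 2^{2qp(s+1+\theta)}.
\]
Convergence of the geometric series requires $s+1+\theta<0$, which is satisfied by picking $s=-1-2\theta$ (with an additional $\delta$-slack absorbed afterwards). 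This produces a pointwise-in-time moment estimate in the Besov norm of finite indices.

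Next, the Besov embedding of Proposition~\ref{proposition:Bes-emb} allows me to trade the large integrability index $2p$ against spatial regularity, passing from $B^{-1-2\theta}_{2p,2p}$ into $\mathcal C^{-1-2\theta-\delta-3/(2p)}$. This delivers
\[
\mathbb E\!\left[\|\der((X^\eps)^{\Diamond 2}-(X^{\eps'})^{\Diamond 2})_{st}\|^{2p}_{\mathcal C^{-1-2\theta-\delta-3/(2p)}}\right]\lesssim C(\eps,\eps')^p\,|t-s|^{2p\theta},
\]
with a constant uniform in $s,t\in[0,T]$ and in $\eps,\eps'$. This is exactly the shape of hypothesis needed to apply the Garsia--Rodemich--Rumsey lemma in the Banach space $\mathcal C^{-1-2\theta-\delta-3/(2p)}$. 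Choosing the control functions accordingly, GRR converts the moment bound on increments into a pathwise Hölder-in-time estimate with exponent slightly smaller than $\theta$. The unavoidable loss in time regularity, from $\theta$ down to $\theta/2-\rho$ for any $\rho<\theta/2$, accounts precisely for the constraint stated in the proposition and corresponds to the usual cost of converting $L^{2p}$ moment bounds on increments into Hölder path regularity.

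Finally, combining this with the fact that $(X_0^\eps)^{\Diamond 2}=0$ for every $\eps$ (so no additional control on the value at zero is needed) and that $C(\eps,\eps')\to 0$ as $|\eps-\eps'|\to 0$ by dominated convergence (already observed inside the proof of Proposition~\ref{prop:holder estimate X2}), the family $((X^\eps)^{\Diamond 2})_{\eps>0}$ is Cauchy in the Banach space $L^{2p}(\Omega,C^{\theta/2-\rho}([0,T],\mathcal C^{-1-3/(2p)-\delta-3\theta}))$ and therefore admits a limit $X^{\Diamond 2}$ in that space. The main obstacle is essentially bookkeeping: one has to make sure that the small parameters $\theta,\delta,\rho,1/p$ can be chosen simultaneously so that (i) the dyadic sum over $q$ converges, (ii) the Besov embedding gives the advertised spatial regularity, and (iii) GRR applies and produces a positive Hölder-in-time exponent. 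No new analytic ideas are required beyond those encoded in Proposition~\ref{prop:holder estimate X2}.
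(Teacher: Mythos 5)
Your argument is correct and is essentially the paper's own proof: the authors simply invoke the Besov embedding (Proposition~\ref{proposition:Bes-emb}) together with the Garsia--Rodemich--Rumsey lemma applied to the block estimates of Proposition~\ref{prop:holder estimate X2}, which is exactly the sum-then-embed-then-GRR chain you spell out. The only caveat is bookkeeping you already flag yourself: the Hölder exponent produced by GRR is of order $\theta-1/(2p)$, so reaching the stated exponent $\theta/2-\rho$ implicitly requires $p$ not too small, a looseness present in the paper's statement as well.
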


\subsection{Renormalization for \texorpdfstring{$I(X^3)$}{I(X3)}}
As the computations are quite similar, we only prove the equivalent of the $L^2$ estimates in Proposition~\ref{prop:holder estimate X2}. Furthermore we only prove it for a fixed time $t$ and not for an increment.

\begin{proof}[Proof of~\eqref{eq:cv_X3}]
By a simple computation we have  
$$
I((X_t^\eps)^{\diamond 3})=\sum_{k\in\mathbb Z^3}\left(\int_0^t\mathcal F((X_s^\eps)^{\diamond 3})(k)e^{-|k|^2|t-s|}\dd s\right)e_k.
$$
Hence 
\begin{equation*}
\begin{split}
\mathbb E\left[\left|\Delta_qI((X_t^\eps)^{\diamond 3})\right|^2\right]&=6\sum_{\substack{k\in\mathbb Z^3\\k_{123}=k}}|\theta(2^{-q}k)|^2\prod_{i=1,..,3}\frac{|f(\eps k_i)|^2}{|k_i|^2}\int_0^t\dd s\\
&\qquad \qquad \times \int_0^s\dd \sigma e^{-(|k_1|^2+|k_2|^2+|k_3|^2)|s-\sigma|-|k|^2(|t-s|+|t-\sigma|)}
\\&=\sum_{k}|\theta(2^{-q}k)|^2\Xi^{\eps,1}(k),
\end{split}
\end{equation*}
where  
\begin{equation*}
\begin{split}
\Xi^{\eps,1}(k)&=\sum_{k_{123}=k,k_i\ne0}\prod_{i=1,..,3}\frac{|f(\eps k_i)|^2}{|k_i|^2}\int_0^t\dd s\int_0^s\dd \sigma e^{-(|k_1|^2+|k_2|^2+|k_3|^2)|s-\sigma|-|k|^2(|t-s|+|t-\sigma)}
\\&\lesssim\sum_{\substack{k_{123}=k\\\max_{i=1,..3}|k_i|=|k_1|}}\frac{1}{|k_1|^2|k_2|^2|k_3|^2}\int_0^t\dd s\int_0^s\dd \sigma e^{-(|k_1|^2+|k_2|^2+|k_3|^2)|s-\sigma|-|k|^2|t-s|}
\\&\lesssim_T\frac{1}{|k|^{2-\rho}}\sum_{\substack{k_{123}=k\\\max_{i=1,..3}|k_i|=|k_1|}}\frac{1}{|k_1|^{4-\rho}|k_2|^2|k_3|^2}\lesssim_T\frac{1}{|k|^{4-4\rho}}(\sum_{k_2}|k_2|^{-3-\rho})^2.
\end{split}
\end{equation*}
We have used that
$$
\int_0^t\dd s\int_0^s\dd \sigma e^{-(|k_1|^2+|k_2|^2+|k_3|^2)|s-\sigma|-|k|^2(|t-s|+|t-\sigma)}\lesssim_{T}\frac{1}{|k_1|^{2-\rho}|k|^{2-\rho}}\int_0^t\dd s\int_0^s\dd \sigma |t-s|^{-1+\rho/2}|s-\sigma|^{-1+\rho/2},
$$
for $\rho>0$ small enough. Using again the Gaussian hypercontractivity we have
$$
\mathbb E\left[\left|\left|\Delta_qI((X_t^\eps)^{\diamond 3})\right|\right|^{2p}_{L^{2p}}\right]\lesssim 2^{-2pq(1/2-\rho)}
$$
and  the Besov embedding gives
$$
\sup_{t\in[0,T],\eps}\mathbb E\left[\|I((X_t^\eps)^{\diamond 3})\|_{1/2-\rho-3/p}\right]<+\infty.
$$
The same computation gives 
$$
\sup_{t\in[0,T]}\mathbb E\left[\| I((X_t^{\eps})^{\diamond 3})-I((X_t^{\eps'})^{\diamond 3} \|^{2p}_{1/2-\rho-3/p}\right]\to^{|\eps'-\eps|\to0}0,
$$
which gives the needed convergence.
\end{proof}

\subsection{Renormalization for \texorpdfstring{$I( X^{\diamond 3} )\circ X$}{I(X3,X)}}
Here, we only prove the $L^2$ estimate for the term $I(X^{\diamond3})X$ instead of $I(X^{\diamond3})\circ X$ since the computations in the two cases are essentially similar. We remark that fo this term we do not need a renormalization.  
\begin{proof}[Proof of~\eqref{eq:cv_I(X3)X}]
We have the following representation formula 
$$
\mathbb E\left[\left|\Delta_q \left(I((X^\eps)^{\diamond 3} X^\eps \right)(t)\right|^2\right]=\sum_{k}|\theta(2^{-q}k)|^2(6I^\eps_1(t)(k)+18I_2^\eps(t)(k)+18I_3^\eps(t)(k)),
$$
where
\[I^\eps_1(t)(k)=2\sum_{k_{1234}=k}\prod_{i=1,..,4}\frac{|f(\eps k_i)|^2}{|k_i|^2}\int_0^t\dd s\int_0^s\dd\sigma e^{-|k_{123}|^2(|t-s|+|t-\sigma|)-(|k_1|^2+|k_2|^2+|k_3|^2)|s-\sigma|},\]
\[I_{12}^\eps(t)(k)=\sum_{\substack {k_{1234}=k\\\max_{i=1,2,3}|k_i|=|k_1|\\|k_{123}|\geq|k_4|}}\frac{1}{|k_1|^2|k_2|^2|k_3|^2|k_4|^2}\int_0^t\dd s\int_0^s\dd\sigma e^{-|k_{123}|^2(|t-s|+|t-\sigma|)-(|k_1|^2+|k_2|^2+|k_3|^2)|s-\sigma|}\]
and
\[I_3^\eps(t)(k)=\sum_{\substack{k_{12}=k\\k_3,k_4}}\prod_{i=1}^{4}\frac{|f(\eps k_i)|^2}{|k_i|^2}\int_0^t\dd s\int_0^t\dd \sigma e^{-(|k_1|^2+|k_2|^2)|s-\sigma|-(|k+k_3|^2+|k_3|^2)|t-s|-(|k_4|^2+|k+k_4|^2)|t-\sigma|}.\]
We have 
\begin{equation*}
\begin{split}
I^\eps_1(t)(k)
&
\lesssim
	\sum_{\substack {k_{1234}=k\\\max_{i=1,2,3}|k_i|=|k_1|}}
		\frac{1}{|k_1|^2|k_2|^2|k_3|^2|k_4|^2}
		\int_0^t\dd s\int_0^s\dd\sigma 
			e^{-|k_{123}|^2(|t-s|+|t-\sigma|)-(|k_1|^2+|k_2|^2+|k_3|^2)|s-\sigma|} 
\\&
\lesssim I_{11}^{\eps}(t)(k)+I_{12}^\eps(t)(k),
\end{split}
\end{equation*}
where
\[I_{11}^\eps(t)(k) = \sum_{\substack {k_{1234}=k\\\max_{i=1,2,3}|k_i|=|k_1|\\|k_{123}|\leq|k_4|}}\frac{1}{|k_1|^2|k_2|^2|k_3|^2|k_4|^2}\int_0^t\dd s\int_0^s\dd\sigma e^{-|k_{123}|^2(|t-s|+|t-\sigma|)-(|k_1|^2+|k_2|^2+|k_3|^2)|s-\sigma|}\]
and
\[I_{12}^\eps(t)(k)\lesssim\sum_{\substack {k_{1234}=k\\\max_{i=1,2,3}|k_i|=|k_1|\\|k_{123}|\geq|k_4|}}\frac{1}{|k_1|^2|k_2|^2|k_3|^2|k_4|^2}\int_0^t\dd s\int_0^s\dd\sigma e^{-|k_{123}|^2(|t-s|+|t-\sigma|)-(|k_1|^2+|k_2|^2+|k_3|^2)|s-\sigma|}.\]
Hence
\begin{equation*}
\begin{split}
I_{11}^\eps(t)(k)
&\lesssim\frac{1}{|k|^2}\sum_{k_2,k_3,k_1,\max|k_i|=|k_1|}\frac{1}{|k_1|^{4-\rho}|k_2|^2|k_3|^2|k_{123}|^{2-\rho}} 
\\&
\lesssim\frac{1}{|k|^2}\sum_{k_1,k_2,k_3}\frac{1}{|k_2|^{3+\rho}|k_3|^{3+\rho}|k_{123}|^{3+\rho}}\lesssim|k|^{-2},
\end{split}
\end{equation*}
for $\rho>0$ small enough, which is the needed result for $I_{1,1}^\eps$. We can treat the second term by a similar computation, indeed 
\begin{equation*}
I_{12}^\eps(t)(k)\lesssim|k|^{-2+\rho}\sum_{k_2,k_3,k_4}|k_2|^{-3-\rho}|k_2|^{-3-\rho}|k_3|^{-3-\rho}\lesssim|k|^{-2+\rho},
\end{equation*}
for $\rho>0$ small enough. This gives the bound for $I_{1,2}^\eps$ and $I_1^\eps$. More precisely we have $I_1^\eps(t)(k)\lesssim|k|^{-2+\rho}$ for $\rho>0$ small enough.  Let us focus on the second term $I_2^\eps(t)(k)$. We have
\begin{equation*}
\begin{split}
I_2^\eps(t)(k)
&\lesssim 
\sum_{\substack{k_{12}=k\\ \max_{i=1,2}|k_i|=|k_1|\\k_3,k_4}}\frac{1}{|k_1|^{1-\rho}|k_2|^{3+\rho}|k_3|^2|k_4|^2}\int_0^t\dd s\int_0^t\dd\sigma e^{-(|k-k_3|^2+|k_3|^2)|t-s|-(|k_4|^2+|k-k_4|^2)|t-\sigma|}
\\&\lesssim_{\rho}\frac{1}{|k|^{1-\rho}}\left(\sum_{k_3}\frac{1}{|k_3|^2}\int_0^t\dd se^{-(|k_3|^2+|k-k_3|^2)|t-s|}\right)^2
\\&\lesssim_{\rho,T}\frac{1}{|k|^{1-\rho}}\left(\sum_{k_3\ne0,k}\frac{1}{|k_3|^2|k-k_3|^{2-\rho}}\right)^2\lesssim_{T,\rho}\frac{1}{|k|^{3-3\rho}}
\end{split}
\end{equation*}
and we obtain the bound for $I_2^\eps$. We notice that 
\begin{align*}
I_3^\eps(t)(k)&=\sum_{\substack{k_{12}=k\\k_3,k_4}}\prod_{i=1}^{4}\frac{|f(\eps k_i)|^2}{|k_i|^2}\int_0^t\dd s\int_0^t\dd \sigma e^{-(|k_1|^2+|k_2|^2)|s-\sigma|-(|k+k_3|^2+|k_3|^2)|t-s|-(|k_4|^2+|k+k_4|^2)|t-\sigma|}\\
&=I_2^{\eps}(t)(k).
\end{align*}
Finally we have
$$
\sup_{t\in[0,T],\eps}\mathbb E\left[\left|\Delta_q \left(I((X^\eps)^{\diamond 3} X^\eps\right)(t)\right|^2\right]\lesssim_{\rho,T}2^{q(1+\rho)},
$$
which is the wanted bound.
\end{proof}
 \subsection{Renormalization for \texorpdfstring{$I( X^{\diamond 2} )\circ X^{\diamond 2}$}{I(X2)X2}}\label{subsec:Renor-I(X2)X2}
We only prove the crucial estimate for a renormalization of $I((X^\varepsilon))^{\diamond 2}\circ (X^\varepsilon)^{\diamond 2})$. We recall that this is enough since all the other terms of the product $\left(I(X^\varepsilon)^{\diamond 2}(X^\varepsilon)^{\diamond 2}\right)^{\diamond}$ are well-defined and converge to a limit with a good regularity.
\begin{proof}[Proof of~\eqref{eq:cv_I(X2)X2}]
A chaos decomposition gives  
{\small
\begin{eqnarray*}
\lefteqn{-(I((X^\eps)^{\diamond 2})(t)\circ(X_t^\eps)^{\diamond 2})=}\\
&&\sum_{k\in\mathbb Z^3}\sum_{|i-j|\leq1}\sum_{k_{1234}=k}\theta(2^{-i}|k_{12}|)\theta(2^{-j}|k_{34}|)
\\&&\times\int_0^t\dd se^{-|k_{12}|^2|t-s|}:\hat X^\eps_s(k_1)\hat X^\eps_s(k_2)\hat X^\eps_t(k_3)\hat X^\eps_t(k_4):e_k
\\&&+4\sum_{k\in\mathbb Z^3}\sum_{|i-j|\leq1}\sum_{k_{13}=k,k_2}\theta(2^{-i}(|k_{12}|)\theta(2^{-j}|k_{2(-3)}|)|f(\eps k_2)|^2\int_0^t\dd se^{-(|k_{12}|^2+|k_2|^2)|t-s|}|k_2|^{-2}:\hat X^\eps_{s}(k_1)\hat X_t^\eps(k_3):e_k
\\&&+2\sum_{|i-j|\leq1}\sum_{k_1,k_2}\theta(2^{-i}|k_{12}|)\theta(2^{-j}|k_{12}|)|f(\eps k_1)|^2|f(\eps k_2)|^2\frac{1-e^{-(|k_1|^2+|k_2|^2+|k_{12}|^2)t}}{|k_1|^2|k_2|^2(|k_1|^2+|k_2|^2+|k_{12}|^2)},
\end{eqnarray*}}
where $:\, :$ denotes the usual Gaussian Wick product.
Let us focus on the last term
$$
A^\eps(t)=\sum_{|i-j|\leq1}\sum_{k_1,k_2}|\theta(2^{-i}k_{12})\|\theta(2^{-j}k_{12})\|f(\eps k_1)|^2\|f(\eps k_2)|^2\frac{1-e^{-(|k_1|^2+|k_2|^2+|k_{12}|^2)t}}{|k_1|^2|k_2|^2(|k_1|^2+|k_2|^2+|k_{12}|^2)}
= C_2^\eps+I_3^\eps(t),
$$ 
where $I_3^\eps$ is defined below. Moreover it is not difficult to see that 
$$
\lim_{\eps\to0}C_2^\eps=\sum_{|i-j|\leq1}\sum_{k_1,k_2}\frac{\theta(2^{-i}|k_{12}|)\theta(2^{-j}|k_{12}|)}{|k_1|^2|k_2|^2(|k_1|^2+|k_2|^2+|k_{12}|^2)}=+\infty.
$$
To obtain the needed convergence we have to estimate the following terms: 
$$
I_1^\eps(t)=\sum_{k\in\mathbb Z^3}\sum_{|i-j|\leq1}\sum_{k_{1234}=k}\theta(2^{-i}|k_{12}|)\theta(2^{-j}|k_{34}|)\int_0^t\dd se^{-|k_{12}|^2|t-s|}:\hat X^\eps_s(k_1)\hat X^\eps_s(k_2)\hat X^\eps_t(k_3)\hat X_t(k_4):e_k,
$$
\begin{multline*}
I^\eps_2(t)=\sum_{k\in\mathbb Z^3}\sum_{|i-j|\leq1}\sum_{k_{13}=k,k_2}\theta(2^{-i}(|k_{12}|)\theta(2^{-j}|k_{2(-3)}|)|f(\eps k_2)|^2\\
\times\int_0^t\dd se^{-(|k_{12}|^2+|k_2|^2)|t-s|}|k_2|^{-2}\hat X^\eps_{s}(k_1)\hat X_t^\eps(k_3)e_k
\end{multline*}
and 
$$
I_3^\eps(t)=\sum_{|i-j|\leq1}\sum_{k_1,k_2}\theta(2^{-i}|k_{12}|)\theta(2^{-j}|k_{12}|)\frac{|f(\eps k_1)|^2|f(\eps k_2)|^2e^{-(|k_1|^2+|k_2|^2+|k_{12}|^2)t}}{|k_1|^2|k_2|^2(|k_1|^2+|k_2|^2+|k_{12}|^2)}.
$$
We notice  that for the deterministic part we have the following bound 
$$
I_3^\eps(t)\lesssim t^{-\rho}\sum_{k_1,k_2,|k_1|\leq|k_2|}\frac{1}{|k_1|^2|k_2|^2(|k_1|^2+|k_2|^2+|k_{12}|^2)^{1+\rho}}\lesssim t^{-\rho}\sum_{k_1,k_2,|k_1|\leq|k_2|}|k_2|^{-4-2\rho}|k_1|^{-2}\lesssim_{\rho}t^{-\rho}
$$
and  the dominated convergence gives  for $\rho>0$ 
$$
\sup_{t\in[0,T]}t^\rho|I_3^\eps(t)-I_3(t)|\to^{\eps\to0}0,
$$
where 
$$
I_3(t)=\sum_{|i-j|\leq1}\sum_{k_1,k_2}\theta(2^{-i}|k_{12}|)\theta(2^{-j}|k_{12}|)\frac{e^{-(|k_1|^2+|k_2|^2+|k_{12}|^2)t}}{|k_1|^2|k_2|^2(|k_1|^2+|k_2|^2+|k_{12}|^2)}.
$$
 Let us focus on $I_1^\eps(t)$ and $I^\eps_2(t)$. A straightforward computation gives 

{\small
\begin{equation*}
\begin{split}
\mathbb E\left[\Delta_q|I^\eps_1(t)|^2\right]&=2\sum_{k\in\mathbb Z^3}\sum_{i\sim j\sim i'\sim j'}\sum_{k_{1234}=k}\theta(2^{-i}|k_{12}|)\theta(2^{-j}|k_{34}|)\theta(2^{-i'}|k_{12}|)\theta(2^{-j'}|k_{34}|)\theta(2^{-q}|k|)^2\prod_{l=1}^{4}\frac{|f(\eps k_l)|^2}{|k_l|^2}
\\&\times\int_0^t\int_0^t\dd s\dd \sigma e^{-|k_{12}|^2(|t-s|+|t-\sigma|)-(|k_1|^2+|k_2|^2)|s-\sigma|}
\\&+2\sum_{k\in\mathbb Z^3}\sum_{i\sim j\sim i'\sim j'}\sum_{k_{1234}=k}\theta(2^{-i}|k_{12}|)\theta(2^{-j}|k_{34}|)\theta(2^{-j'}|k_{12}|)\theta(2^{-i'}|k_{34}|)\theta(2^{-q}|k|)^2\prod_{l=1}^{4}\frac{|f(\eps k_l)|^2}{|k_l|^2}
\\&\times\int_0^t\int_0^t\dd s\dd\sigma e^{-(|k_{12}|^2+|k_1|^2+|k_2|^2)|t-s|-(|k_{34}|^2+|k_3|^2+|k_4|^2)|t-\sigma|}
\\&+2\sum_{k\in\mathbb Z^3}\sum_{i\sim j; i'\sim j'}\sum_{k_{1234}=k}\theta(2^{-i}|k_{12}|)\theta(2^{-j}|k_{34}|)\theta(2^{-i'}|k_{14}|)\theta(2^{-j'}|k_{23}|)\theta(2^{-q}|k|)^2\prod_{l=1}^{4}\frac{|f(\eps k_l)|^2}{|k_l|^2}
\\&\times\int_0^t\int_0^t\dd s\dd\sigma e^{-(|k_{12}|^2+|k_2|^2)|t-s|-(|k_{14}|^2+|k_4|^2)|t-\sigma|-|k_1|^2|s-\sigma|} 
\\&\equiv\sum_{j=1}^{3} I_{1,j}^\eps(t).
\end{split}
\end{equation*}}
Let us begin by treating the first term. As usual by symmetry we have 
\begin{equation*}
\begin{split}
I_{1,1}^{\eps}(t)&\lesssim\sum_{k\in\mathbb Z^3}\sum_{q\lesssim i}\sum_{\substack{k_{1234}=k\\|k_1|\leq|k_2|,|k_3|\leq|k_4|\\\max_{l=1,\dots,4}|k_l|=|k_2|}}\theta(2^{-q}|k|)\theta(2^{-i}|k_{12}|)\prod_{i=l}^{4}|k_l|^{-2}\int_0^t\int_0^t\dd s\dd \sigma e^{-|k_{12}|^2(|t-s|+|t-\sigma|)-|k_2|^2|s-\sigma|}
\\&+\sum_{k\in\mathbb Z^3}\sum_{q\lesssim i}\sum_{\substack{k_{1234}=k\\|k_1|\leq|k_2|,|k_3|\leq|k_4|\\\max_{l=1,\dots,4}|k_l|=|k_4|}}\theta(2^{-q}|k|)\theta(2^{-i}|k_{12}|)\prod_{i=l}^{4}|k_l|^{-2}\int_0^t\int_0^t\dd s\dd \sigma e^{-|k_{12}|^2(|t-s|+|t-\sigma|)-|k_2|^2|s-\sigma|}
\\&\equiv A_{1}^\eps(t)+A_{2}^\eps(t).
\end{split}
\end{equation*}
We notice that if $\max_{l=1,. . .,4}|k_l|=|k_1|$ then $|k|\lesssim|k_1|$, and 
$$
A_1^\eps(t)\lesssim\sum_{k\in\mathbb Z^3}|k|^{-1+2\eta}\theta(2^{-q}|k|)\sum_{\substack{k_{1234}=k\\|k_1|\leq|k_2|,|k_3|\leq|k_4|\\\max_{l=1,\dots,4}|k_l|=|k_2|}}|k_1|^{-3-\eta/3}|k_3|^{-3-\eta/3}|k_4|^{-3-\eta/3}\sum_{q\lesssim i}2^{-i(2-\eta)}\lesssim  t^\eta2^{3q\eta},
$$
where we have used that
$$
\int_0^t\int_0^t\dd s\dd \sigma e^{-|k_{12}|^2(|t-s|+|s-\sigma|)-|k_2|^2|s-\sigma|}\lesssim t^\eta\frac{1}{|k_2|^{2-\eta}|k_{12}|^{2-\eta}}.
$$
By a similar argument we have
$$
A_2^\eps(t)\lesssim\sum_{k\in\mathbb Z^3}|k|^{-1+4\eta}\theta(2^{-q}|k|)\sum_{\substack{k_{1234}=k\\|k_1|\leq|k_2|,|k_3|\leq|k_4|\\\max_{l=1,\dots,4}|k_l|=|k_4|}}|k_1|^{-3-\eta}|k_2|^{-3-\eta}|k_3|^{-3-\eta}\sum_{q\lesssim i}2^{-i(2-\eta)}\lesssim t^\eta 2^{5q\eta} 
$$
and then $\sup_{\eps}I_{1,1}(t)\lesssim t^{\eta}2^{5q\eta}$. Let us treat the second term $I_{1,2}^\eps(t)$. We have 
{\small
\begin{equation*}
\begin{split}
I_{1,2}^{\eps}(t)\lesssim& \sum_{k\in\mathbb Z^3}\sum_{q\lesssim i\sim j}\sum_{\substack{k_{1234}=k\\|k_1|\leq|k_2|,|k_3|\leq|k_4|\\\max_{l=1,\dots,4}|k_l|=|k_2|}}\theta(2^{-q}|k|)\theta(2^{-i}|k_{12}|)\theta(2^{-j}|k_{34}|)\prod_{i=l}^{4}|k_l|^{-2}
\\&\times\int_0^t\int_0^t\dd s\dd\sigma e^{-(|k_{12}|^2+|k_1|^2+|k_2|^2)|t-s|-(|k_{34}|^2+|k_3|^2+|k_4|^2)|s-\sigma|}
\\ \lesssim&\sum_{k\in\mathbb Z^3}|k|^{-1+4\eta}\sum_{q\lesssim i\sim j}\sum_{\substack{k_{1234}=k\\|k_1|\leq|k_2|,|k_3|\leq|k_4|\\\max_{l=1,. . .4}|k_l|=|k_2|}}\theta(2^{-q}|k|)\theta(2^{-i}|k_{12}|)\theta(2^{-j}|k_{34}|)\frac{1}{|k_1|^2|k_2|^{3+3\eta}|k_3|^2|k_4|^2|k_{34}|^{2-\eta}}
\\ \lesssim& t^\eta 2^{q(2+4\eta)}\sum_{q\lesssim j}2^{-j(2-\eta)}\sum_{l}|l|^{-3-\eta}\lesssim t^\eta 2^{5q\eta}.
\end{split}
\end{equation*}
}
We have to treat the last term in the fourth chaos. A similar computation gives 
\small{ \begin{equation*}
\begin{split}
I_{1,3}^{\eps}(t)
\lesssim&
	\sum_{k\in\mathbb Z^3}\sum_{q\lesssim i\sim j;q\lesssim i'\sim j'}\sum_{k_{1234}=k}\theta(2^{-i}|k_{12}|)\theta(2^{-j}|k_{34}|)\theta(2^{-i'}|k_{14}|)\theta(2^{-j'}|k_{23}|)\theta(2^{-q}|k|)^2
\\
&
\times\int_0^t\int_0^t\dd s\dd\sigma e^{-(|k_{12}|^2+|k_2|^2)|t-s|-(|k_{14}|^2+|k_4|^2)|t-\sigma|}
\\
\lesssim&
	\sum_{k\in\mathbb Z^3}\sum_{q\lesssim i'\sim j'}\sum_{\substack{k_{1234}=k\\|k_4|\leq|k_2|,|k_1|\leq|k_3|}}\theta(2^{-i}|k_{12}|)\theta(2^{-j}|k_{34}|)\theta(2^{-i'}|k_{14}|\theta(2^{-q}|k|)^2\prod_{l=1}^{4}\frac{1}{|k_l|^2}
\\
&
\times\int_0^t\int_0^t\dd s\dd\sigma e^{-|k_2|^2|t-s|-|k_{14}|^2|t-\sigma|}
\\
\lesssim&
	t^\eta 2^{-q(2-\eta)} \sum_{k\in\mathbb Z^3}\theta(2^{-q}|k|)\sum_{\substack{k_{1234}=k\\|k_4|\leq|k_2|,|k_1|\leq|k_3|}}\frac{1}{|k_1|^2|k_2|^{4-\eta}|k_3|^2|k_4|^2}.
\end{split}
\end{equation*}}
We still need to bound  the sum 
$$
\sum_{\substack{k_{1234}=k\\|k_4|\leq|k_2|,|k_1|\leq|k_3|}}\frac{1}{|k_1|^2|k_2|^{4-\eta}|k_3|^2|k_4|^2}.
$$
For that, we notice that when $|k_3|\leq|k_2|$ we can use the bound 
$$
\frac{1}{|k_1|^2|k_2|^{4-\eta}|k_3|^2|k_4|^2}\lesssim|k|^{-1+4\eta}|k_1|^{-3-\eta}|k_3|^{-3-\eta}|k_4|^{-3-\eta}
$$ 
and when $|k_2|\leq|k_3|$   we can use that 
$$
\frac{1}{|k_1|^2|k_2|^{4-\eta}|k_3|^2|k_4|^2}\lesssim|k|^{-1+4\eta}|k_1|^{-2}|k_2|^{-4+\eta}|k_3|^{-1+4\eta}|k_4|^{-2}\lesssim|k|^{-1+4\eta}|k_1|^{-3-\eta}|k_2|^{-3-\eta}|k_4|^{-3-\eta},
$$
where we have used that $|k_4|\leq|k_2|$. We can conclude that  $\sup_{\eps}I_{1,3}^{\eps}(t)\lesssim t^\eta2^{5q\eta}$. This gives the needed bound for the term lying in the chaos of order four. In fact, we have 
$$
\sup_{\eps}\mathbb E\left[\Delta_q|I^\eps_1(t)|^2\right]\lesssim  t^\eta2^{5q\eta}.
$$
Let us focus on the term lying in the second chaos.
{\small \begin{equation*}
\begin{split}
\mathbb E\left[|\Delta_qI_2^\eps(t)|^2\right]&=\sum_{k\in\mathbb Z^3}\sum_{q\lesssim i\sim j,q\lesssim i'\sim j'}\sum_{k_{13}=k,k_2,k_4}\theta(2^{-i}|k_{12}|)\theta(2^{j}|k_{2(-3)}|)\theta(2^{-i'}|k_{14}|)\theta(2^{-j'}|k_{4(-3)}|)
\\&\times\prod_{i=1}^{4}\frac{|f(\eps k_l)|^2}{|k_l|^2}\int_0^t\int_0^t\dd s\dd \sigma e^{-(|k_{12}|^2+|k_2|^2)|t-s|-(|k_{14}|^2+|k_4|^2)|t-\sigma|-|k_1|^2|s-\sigma|} 
\\&+\sum_{k\in\mathbb Z^3}\sum_{q\lesssim i\sim j,q\lesssim i'\sim j'}\sum_{k_{13}=k,k_2,k_4}\theta(2^{-i}|k_{12}|)\theta(2^{j}|k_{2(-3)}|)\theta(2^{-i'}|k_{34}|)\theta(2^{-j'}|k_{4(-3)}|)
\\&\times\prod_{i=1}^{4}\frac{|f(\eps k_l)|^2}{|k_l|^2}\int_0^t\int_0^t\dd s\dd \sigma e^{-(|k_{12}|^2+|k_2|^2+|k_1|^2)|t-s|-(|k_{34}|^2+|k_4|^2+|k_3|^2)|t-\sigma|}
\\&\equiv I_{2,1}^\eps(t)+I_{2,2}^\eps(t).
\end{split}
\end{equation*}}
We treat these two terms separately.  In fact, by symmetry, we have
{\small \begin{equation*}
\begin{split}
I_{2,1}^{\eps}(t)&\lesssim\sum_{k\in\mathbb Z^3}\sum_{q\lesssim i\sim j;q\lesssim i'\sim j'}\sum_{\substack{k_{13}=k\\k_2,k_4,|k_1|\leq|k_3|}}\theta(2^{-q}|k|)^2\theta(2^{-i}|k_{12}|)\theta(2^{j}|k_{2(-3)}|)\theta(2^{-i'}|k_{14}|)\theta(2^{-j'}|k_{4(-3)}|)
\\&\times\prod_{i=1}^{4}\frac{|f(\eps k_l)|^2}{|k_l|^2}\int_0^t\int_0^t\dd s\dd \sigma e^{-(|k_{12}|^2+|k_2|^2)|t-s|-(|k_{14}|^2+|k_4|^2)|t-\sigma|} 
\\&\lesssim t^{\eta}\sum_{|k|\sim q}|k|^{-1+\eta}\sum_{q\lesssim i,i'}\theta(2^{-i}|k_{12}|)\theta(2^{-i'}|k_{14}|)\sum_{k_1,k_2,k_4}|k_1|^{-3-\eta}|k_2|^{-3-\eta}|k_3|^{-3-\eta}|k_{12}|^{-1+2\eta}|k_{14}|^{-1+2\eta}
\\&\lesssim2^{q(2+\eta)}\sum_{q\lesssim i,i'}2^{-(i+i')(1-2\eta)}\lesssim t^\eta2^{3q\eta},
\end{split}
\end{equation*}}
which gives the first bound. The second term has a  similar bound, indeed 
\begin{equation*}
\begin{split}
I_{2,2}^{\eps}(t)&\lesssim \sum_{k\in\mathbb Z^3}\sum_{q\lesssim i,q\lesssim i'}\sum_{k_{13}=k,k_2,k_4,|k_1|\leq|k_3|}\theta(2^{-i}|k_{12}|)\theta(2^{-i'}|k_{34}|)\prod_{i=1}^{4}\frac{|f(\eps k_l)|^2}{|k_l|^2}
\\&\times\int_0^t\int_0^t\dd s\dd \sigma e^{-(|k_{12}|^2+|k_2|^2)|t-s|-(|k_{34}|^2+|k_4|^2)|t-\sigma|}\lesssim t^\eta 2^{3q\eta},
\end{split}
\end{equation*}
which ends the proof.
\end{proof} 

\subsection{Renormalization for \texorpdfstring{$I( X^{\diamond 3} )\circ X^{\diamond 2} $}{I(X3)X2}}
\label{subsec:Renor-I(X3)X2}
Here again we only give the crucial bound, but for $I(X^{\diamond3})\diamond X^{\diamond2}$ instead of $(I(X^{\diamond3})\circ_{\diamond} X^{\diamond2})$.
\begin{proposition}
  For all $T>0$, $t\in[0,T]$, $ \delta, \delta' > 0$ and all $ 1\gg \nu > 0$ small enough, there exists two constants and $C >
  0$ depending on $T$, $\delta, \delta'$ and $\nu$ such that for all $q \ge - 1$,
  \[ \mathbb E [ t^{\delta'+\delta} | \Delta_q ( I (  ( X^{\varepsilon}_t)^{ \diamond 3} )  (
     X_t^{\varepsilon})^{\diamond 2}  - 3 C^{\varepsilon}_2 X_t^{\varepsilon}) |^2]
     \le C t^{\delta} 2^{q ( 1 + \nu)}. \]
\end{proposition}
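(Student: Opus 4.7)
The plan is to decompose
\[
I((X^\eps_t)^{\Diamond 3})\,(X^\eps_t)^{\Diamond 2}-3C^\eps_2X^\eps_t
\]
into its Wiener chaos components of orders $5$, $3$, and $1$ via Wick's formula applied to the product $\int_0^t P_{t-s}{:}X^3_s{:}\dd s\cdot{:}X^2_t{:}$ in the Fourier basis. A single Gaussian contraction $\mathbb E[\hat X_s(k_i)\hat X_t(k_j)]=\delta_{k_i+k_j=0}e^{-|k_i|^2(t-s)}/|k_i|^2$ applied between the three momenta of ${:}X^3_s{:}$ and the two momenta of ${:}X^2_t{:}$ produces via the $\binom{3}{\ell}\binom{2}{\ell}\ell!$ pairings: a $5$th chaos piece ($\ell=0$, one term), a $3$rd chaos piece ($\ell=1$, $6$ symmetric terms), and a $1$st chaos piece ($\ell=2$, $6$ symmetric terms) of the form
\[
6\sum_{k_1,k_2,k_3}e_{k_3}f(\eps k_3)\frac{|f(\eps k_1)|^2|f(\eps k_2)|^2}{|k_1|^2|k_2|^2}\int_0^t e^{-A(t-s)}{:}\hat X_s(k_3){:}\dd s,
\]
with $A=A(k_1,k_2,k_3)=|k_1|^2+|k_2|^2+|k_{1,2,3}|^2$. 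The subtracted counterterm $3C^\eps_2X^\eps_t$ equals $6\sum_{k_1,k_2,k_3}e_{k_3}f(\eps k_3)\frac{|f(\eps k_1)|^2|f(\eps k_2)|^2}{|k_1|^2|k_2|^2}\cdot\frac{1}{B}{:}\hat X_t(k_3){:}$ with $B=B(k_1,k_2)=|k_1|^2+|k_2|^2+|k_{1,2}|^2$, so it is designed to cancel the leading divergence of the chaos-$1$ contribution.

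For the $5$th and $3$rd chaos pieces, no renormalization is needed and I would bound $\mathbb E|\Delta_q(\cdot)|^2$ directly: squaring and pairing via $\mathbb E[{:}X_s\cdots X_t{:}\overline{{:}X_\sigma\cdots X_t{:}}]$ gives a sum over momenta weighted by $\theta(2^{-q}|k|)^2\prod|k_i|^{-2}\times(\text{heat-kernel integrals})$, exactly as in the $I(X^{\Diamond 3})X$ and $I(X^{\Diamond 2})X^{\Diamond 2}$ estimates above. Splitting by which $|k_i|$ dominates and invoking the uniform summability $\sum_k|k|^{-3-\eta}<\infty$ yields $\lesssim t^\eta 2^{q(1+\nu)}$ uniformly in $\eps$ for $\eta,\nu>0$ small. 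For the chaos-$1$ difference, the key algebraic identity is
\[
\int_0^t e^{-A(t-s)}{:}\hat X_s(k_3){:}\dd s-\frac{{:}\hat X_t(k_3){:}}{B}=\Bigl(\tfrac{1}{A}-\tfrac{1}{B}\Bigr){:}\hat X_t(k_3){:}-\frac{e^{-At}}{A}{:}\hat X_t(k_3){:}+\int_0^t e^{-A(t-s)}\bigl({:}\hat X_s(k_3){:}-{:}\hat X_t(k_3){:}\bigr)\dd s,
\]
where the first piece, after resummation over $(k_1,k_2)$, is absorbed into $3\varphi^\eps_t X^\eps_t$ plus a remainder with summable $(k_1,k_2)$-factor $\lesssim(|k_{1,2}\cdot k_3|+|k_3|^2)/(AB)$, while the third is controlled by the OU increment bound $\mathbb E|\hat X_t(k_3)-\hat X_s(k_3)|^2\lesssim|k_3|^{-2+2\eta}|t-s|^\eta$. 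Squaring and summing yields a $k_3$-sum of the form $\sum_{|k_3|\sim 2^q}|k_3|^{-2+\text{gain}}\lesssim 2^{q(1+\nu)}$, with the time-dependent prefactor absorbing into the $t^{-\delta'}$ explosion that the statement allows.

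The main obstacle is precisely the chaos-$1$ renormalization: the denominators $A$ and $B$ differ by $|k_{1,2,3}|^2-|k_{1,2}|^2=2k_{1,2}\cdot k_3+|k_3|^2$, so the subtraction of $3C^\eps_2X^\eps$ alone does not produce a finite limit as $\eps\to 0$ but only a quantity with controlled blowup $t^{-\delta'}$ - this is what the $t^\delta$ prefactor on the left-hand side is tailored to accommodate. The full finite limit requires the further subtraction of $3\varphi^\eps_t X^\eps_t$ (as recorded in the theorem's convergence statement \eqref{eq:cv_I(X3)X2}), but for the present quantitative $L^2$-bound one only needs that, after subtracting $3C^\eps_2X^\eps_t$, all residual $(k_1,k_2)$-sums produced by the identity above are uniformly summable with the gain in $|k_3|$ that bootstraps the Littlewood-Paley block estimate to $2^{q(1+\nu)}$. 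Gaussian hypercontractivity then upgrades this $L^2$ bound to the $L^{2p}$ bound needed, and Besov embedding together with Garsia-Rodemich-Rumsey concludes the space-time regularity claimed in the main theorem.
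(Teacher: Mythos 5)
Your strategy is the same as the paper's: expand the product of $I({:}X^3{:})$ and ${:}X^2{:}$ into its fifth, third and first Wiener chaos components (with the correct combinatorial factors $1$, $6$, $6$), estimate the fifth and third chaos directly by summing $\theta(2^{-q}k)^2\prod|k_i|^{-2}$ against the heat-kernel time integrals, and isolate the cancellation in the first chaos. The paper organizes the first-chaos step through the intermediate objects $\tilde I^{(3)}_t$ (freezing $\hat X_s(k)$ at time $t$) and $\tilde C^\eps_2(t)$ (the time-truncated constant), which is algebraically equivalent to your splitting into $(\tfrac1A-\tfrac1B)\hat X_t$, an exponential tail, and an increment term. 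Two cautions. First, the paper's $\varphi^\eps_t$ is (up to sign and constants) the tail $\sum e^{-Bt}/(\dots B)$, i.e.\ your second piece, not the $(\tfrac1A-\tfrac1B)$ piece; the latter has a $k_3$-dependent coefficient and cannot be written as a scalar function times $X^\eps_t$ (immaterial here since the proposition subtracts only $3C^\eps_2X^\eps_t$, but worth fixing). Second, and more substantively, your claim that the factor $(|k_{1,2}\cdot k_3|+|k_3|^2)/(AB)$ is ``summable over $(k_1,k_2)$ with a gain in $|k_3|$'' is the crux and is not automatic: bounding $A,B\gtrsim\max(|k_1|,|k_2|)^2$ alone yields a coefficient of order $|k_3|$, which after squaring gives $2^{3q}$ on the block and destroys the estimate. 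You need either the observation that $A\gtrsim|k_3|^2$ whenever $\max(|k_1|,|k_2|)\lesssim|k_3|$ (which brings the coefficient down to $O(\log|k_3|)$), or the interpolation the paper uses, namely $|e^{-A\tau}-e^{-B\tau}|\lesssim\tau^\eta|k|^\eta(|k|+\max\{|k_1|,|k_2|\})^\eta$, which costs only $|k|^{2\eta}$ and keeps the $(k_1,k_2)$-sum convergent. With that step supplied, your argument matches the paper's.
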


\begin{proof}
Thanks to a straightforward computation we have
 $$
   -I ( ( X^{\varepsilon}_t)^{\diamond 3}) ( X^{\varepsilon}_t)^{\diamond 2} = I^{( 1)}_t +
     I^{( 2)}_t + I^{( 3)}_t, 
$$
  where
$$
   I^{(1)}_t = \sum_{k \neq 0} e_k \sum_{\tmscript{ \begin{array}{c}
       {}k_{12345} = k\\
       k_i \neq 0
     \end{array}}} \int_0^t \mathd s e^{- | k_1 + k_2 + k_3 |^2 | t - s |} :
     \hat{X}^{\varepsilon}_s ( k_1) \hat{X}^{\varepsilon}_s ( k_2)
     \hat{X}^{\varepsilon}_s ( k_3) \hat{X}^{\varepsilon}_t ( k_4)
     \hat{X}^{\varepsilon}_t ( k_5) :, 
     $$
     $$
   I^{(2)}_t = 6 \sum_{k \neq 0} e_k \sum_{\tmscript{\begin{array}{c}
       k_3, k_{124} = k\\
       k_i \neq 0
     \end{array}}} \int_0^t \mathd s e^{- | k_1 + k_2 + k_3 |^2 | t - s |}
     \frac{e^{- | k_3 |^2 | t - s |}}{| k_3 |^2} f ( \varepsilon k_3)^2 :
     \hat{X}^{\varepsilon}_s ( k_1) \hat{X}^{\varepsilon}_s ( k_2)
     \hat{X}^{\varepsilon}_t ( k_4) : 
  $$
  and
   $$ 
 I^{(3)}_t = 6 \sum_{k \neq 0} e_k \int_0^t \mathd s \sum_{k_1 \neq 0, k_2
     \neq 0} \frac{f ( \varepsilon k_1)^2 f ( \varepsilon k_2)^2}{| k_1 |^2 |
     k_2 |^2} e^{- ( | k + k_1 + k_2 |^2 + | k_1 |^2 + | k_2 |^2) | t - s |}
     \hat{X}_s^{\varepsilon} ( k). 
$$
  Hence, 
  \begin{multline*}
   -\left(I ( ( X^{\varepsilon}_t)^{\diamond 3}) ( X^{\varepsilon}_t)^{\diamond 2} - 3
     C^{\varepsilon}_2 X_t^{\varepsilon} \right)= I ( ( X^{\varepsilon}_t)^{\diamond 3})  (
     X_t^{\varepsilon})^{\diamond 2} I^{(3)}_t\\ + ( I^{(3)}_t - \tilde{I}_t^{(3)}) 
     + (
     \tilde{I}_t^{(3)} - 3 \tilde{C}^{\varepsilon}_2(t) X_t^{\varepsilon}) + 3(C^\eps_2-\tilde{C}_2^\eps(t))X_t^\eps,
   \end{multline*}
  where we remind that
  $$
  C^\eps_2 = \sum_{k_1\neq0,k_2\neq0}\frac{f(\eps k_1)f(\eps k_2)}{|k_1|^2|k_2|^2(|k_1|^2+|k_2|^2+|k_1+k_2|^2)}
  $$
  and where we have defined
  $$
   \tilde{I}_t^{(3)} = 6 \sum_{k \neq 0} e_k \int_0^t \mathd s \sum_{k_1 \neq 0,
     k_2 \neq 0} \frac{f ( \varepsilon k_1)^2 f ( \varepsilon k_2)^2}{| k_1
     |^2 | k_2 |^2} e^{- ( | k + k_1 + k_2 |^2 + | k_1 |^2 + | k_2 |^2) | t -
     s |} \hat{X}_t^{\varepsilon} ( k) 
  $$
  and
   $$
   \tilde {C}^{\varepsilon}_2 = 2 \int_0^t \mathd s \sum_{k_1 \neq 0, k_2 \neq 0}
     \frac{f ( \varepsilon k_1)^2 f ( \varepsilon k_2)^2}{| k_1 |^2 | k_2 |^2}
     e^{- ( | k_1 + k_2 |^2 + | k_1 |^2 + | k_2 |^2) | t - s |}.
 $$
  Hence for $q \ge - 1$,
  \begin{multline*}
    \mathbb E [ | \Delta_q ( I ( ( X^{\varepsilon}_t)^{\diamond 3})  (
    X_t^{\varepsilon})^{\diamond 2}  - 3C^{\varepsilon}_2 X_t^{\varepsilon}) |^2]
    \lesssim 
    		\mathbb E [ | \Delta_q ( I^{(1)}_t) |^2]
    + \mathbb E [ | \Delta_q ( I^{(2)}_t) |^2]
      + \mathbb E [ | \Delta_q ( I^{(3)}_t - \tilde{I}_t^{(3)}) |^2]\\
     +\mathbb E [ | \Delta_q ( \tilde{I}_t^{(3)} - \tilde {C}^{\varepsilon}_2(t)
    X_t^{\varepsilon}) |^2]
     +|C^\eps_2-\tilde{C}^\eps_2(t)|^2\expect[|\Delta_q X^\eps_t|^2].
  \end{multline*}
  \paragraph{Terms in the first chaos.}
  
  Let us first deal with the "deterministic" part, here $C^\eps_2 -\tilde{C}^\eps_2(t)$. An obvious computation gives for all $\delta'>0$,
  $|C^\eps_2 -\tilde{C}^\eps_2(t)|^2\lesssim_{\delta'}1/t^{\delta'}$.
Furthermore,
$\expect[|\Delta_q X^\eps_t|2]\lesssim 2^q$,
hence for all $\delta'>0$,
\[ |C^\eps_2-\tilde{C}^\eps_2(t)|^2\expect[|\Delta_q X^\eps_t|^2] \lesssim 2^q /t^{\delta'}. \]
  Let us deal with $\mathbb E [ | \Delta_q ( I^{(3)}_t - \tilde{I}_t^{(3)})
  |^2]$. For $k \neq 0$ we define
  $$
   a_k ( t - s) = \sum_{k_1 \neq 0, k_2 \neq 0} \frac{f ( \varepsilon
     k_1)^2 f ( \varepsilon k_2)^2}{| k_1 |^2 | k_2 |^2} e^{- ( | k + k_1 +
     k_2 |^2 + | k_1 |^2 + | k_2 |^2) | t - s |}, 
  $$
 such that
 \begin{eqnarray*}
\lefteqn{    \mathbb E [ | \Delta_q ( I_t^{( 3)} - \tilde{I}_t^{( 3)}) |^2]}\\
 & = &
    \mathbb E \left[ \left| \int_0^t \sum_k \theta ( 2^{- q} k) e_k a_k ( t
    - s) ( \hat{X}_s^{\varepsilon} ( k) - \hat{X}_t^{\varepsilon} ( k))
    \right|^2 \right]\\
    & = & \int_{[ 0, t]^2} \mathd \overline{s} \mathd s
    \sum_{\tmscript{\begin{array}{c}
      k \neq 0\\
      \overline{k} \neq 0
    \end{array}}} e_k e_{\overline{k}} \theta ( 2^{- q} k) \theta ( 2^{- q}
    \overline{k}) \\
    && \qquad \qquad\qquad\times a_k ( t - s) a_{\overline{k}} ( t - \overline{s})
    \mathbb E [ ( \hat{X}_s^{\varepsilon} ( k) - \hat{X}_t^{\varepsilon} (
    k)) ( \hat{X}_{\overline{s}}^{\varepsilon} ( \overline{k}) -
    \hat{X}_t^{\varepsilon} ( \overline{k}))].
  \end{eqnarray*}
  But
  \begin{eqnarray*}
\mathbb E [ ( \hat{X}_s^{\varepsilon} ( k) - \hat{X}_t^{\varepsilon} (
    k)) ( \hat{X}_{\overline{s}}^{\varepsilon} ( \overline{k}) -
    \hat{X}_t^{\varepsilon} ( \overline{k}))] 
    & = & \delta_{k = -
    \overline{k}} \frac{f ( \varepsilon k)^2}{| k |^2} ( e^{- | s -
    \overline{s} |  | k |^2} - e^{- | t - \overline{s} |  | k |^2} - e^{- | t
    - s |  | k |^2} + 1)\\
     & \lesssim & \delta_{k = -
    \overline{k}} \frac{f ( \varepsilon k)^2}{| k |^2}| k |^{2 \eta} | t - s |^{\eta / 2} | t - \overline{s}
    |^{\eta / 2}.
  \end{eqnarray*}
    Hence
  \begin{eqnarray*}
    \mathbb E [ | \Delta_q ( I_t^{( 3)} - \tilde{I}_t^{( 3)}) |^2] &
    \lesssim & \sum_{k \neq 0} \theta ( 2^{- q} k)^2 \frac{f ( \varepsilon
    k)^2}{| k |^{2 ( 1 - \eta)}} \left( \int_0^t \mathd s | t - s |^{\eta / 2}
    a_k ( | t - s |) \right)^2
  \end{eqnarray*}
  and
  \begin{eqnarray*}
    \int_0^t \mathd s | t - s |^{\eta / 2} a_k ( | t - s |) & = &
    \sum_{\tmscript{\begin{array}{c}
      k_1 \neq 0\\
      k_2 \neq 0
    \end{array}}} \int_0^t \mathd s | t - s |^{\eta / 2} e^{- ( | k + k_1 +
    k_2 | ^2 + | k_1 |^2 + | k_2 |^2) | t - s |} \frac{f ( \varepsilon k_1)^2
    f ( \varepsilon k_1)^2}{| k_1 |^2 | k_2 |^2}\\
    & \lesssim & \sum_{\tmscript{\begin{array}{c}
      k_1 \neq 0\\
      k_2 \neq 0
    \end{array}}} | k_1 |^{- 3 - \eta'} | k_2 |^{- 3 - \eta'} \int_0^t \mathd
    s | t - s |^{- 1 + ( \eta / 2 - \eta')}  \lesssim  t^{\eta / 2 - \eta'},
  \end{eqnarray*}
  for $\eta / 2 - \eta' > 0$. Hence we have
  $$ 
  \mathbb E [ | \Delta_q ( I_t^{( 3)} - \tilde{I}_t^{( 3)}) |^2] \lesssim
     2^{q ( 1 + 2 \eta)} t^{\eta - 2 \eta'} . 
   $$
 We have furthermore
  $$
   \mathbb E [ | \Delta_q ( \tilde{I}_t^{( 3)} - C^{\varepsilon}_2
     X_t^{\varepsilon}) |^2] = \sum_{k \neq 0} \frac{f ( \varepsilon k)^2}{| k
     |^2}\theta(2^{-q}k)^2 b_k ( t)^2, 
  $$
  with
  $$ b_k ( t) = \int_0^t \sum_{\tmscript{\begin{array}{c}
       k_1 \neq 0\\
       k_2 \neq 0
     \end{array}}} \frac{f ( \varepsilon k_1)^2 f ( \varepsilon k_2)^2}{| k_1
     |^2 | k_2 |^2} e^{- ( | k_1 |^2 + | k_2 |^2) | t - s |} \{ e^{- | k_1 +
     k_2 |^2 | t - s |} - e^{- | k_1 + k_2 + k |^2 | t - s |} \} .
  $$
Using that
 $$
  | e^{- | k_1 + k_2 + k |^2 | t - s |} - e^{- | k_1 + k_2 |^2 | t - s |}
     | \lesssim | t - s |^{\eta} | k |^{\eta} ( | k | + \max \{ | k_1 |, | k_2
     | \})^{\eta} 
 $$
  we have the following bound
 $$
  b_k ( t) \lesssim \int_0^t \sum_{\tmscript{\begin{array}{c}
       k_1 \neq 0\\
       k_2 \neq 0
     \end{array}}} | k_1 |^{- 3 - \eta'} | k_2 |^{- 3 - \eta''} | k |^{\eta} (
     | k | + \max \{ | k_1 |, | k_2 | \})^{\eta} | t - s |^{- 1 + ( \eta -
     \eta' / 2 - \eta'' / 2)} .
 $$
  We can suppose that $\max \{ | k_1 |, | k_2 | \} = | k_1 |$ as the
  expression is symmetric in $k_1, k_2$, then if $| k | > | k_1 |$,
  $$
  b_k ( t) \lesssim t^{( \eta - \eta' / 2 - \eta'' / 2)} | k |^{2 \eta}, 
  $$
  for $\eta - \eta' / 2 - \eta'' / 2 > 0$. Furthermore if $| k_1 | > | k |$,
  and $\eta' > \eta$ then
  $$
   b_k ( t) \lesssim t^{( \eta - \eta' / 2 - \eta'' / 2)} | k |^{\eta}
     \sum_{\tmscript{\begin{array}{c}
       k_1 \neq 0\\
       k_2 \neq 0
     \end{array}}} | k_1 |^{- 3 - ( \eta' - \eta)} | k_2 |^{- 3 - \eta''}
     \lesssim t^{( \eta - \eta' / 2 - \eta'' / 2)} | k |^{\eta}. 
  $$
  Hence, there exists $\delta > 0$ and $\nu > 0$ such that
 $$
  \mathbb E [ | \Delta_q ( \tilde{I}_t^{( 3)} - 3 C^{\varepsilon}_2
     X_t^{\varepsilon}) |^2] \lesssim t^{\delta} 2^{( 1 + \nu) q} . 
  $$  
  
  \paragraph{Terms in the third chaos.}
  Let us define
  $ 
  c_{k_1, k_2} ( t - s) = \sum_{k_3 \neq 0} \frac{f ( \varepsilon
     k_3)^2}{| k_3 |^2} e^{- ( | k_1 + k_2 + k_3 |^2 + | k_3 |^2) | t - s |}
  $
  such that
  $$
   I^{( 2)}_t = 6 \sum_{\tmscript{\begin{array}{c}
       k \neq 0, k_i \neq 0\\
       k_{124} = k
     \end{array}}} e_k \int_0^t \mathd s c_{k_1, k_2} ( t - s) : \hat{X}_s (
     k_1) \hat{X}_s ( k_2) \hat{X}_t^{\varepsilon} ( k_4) : \, .
  $$
  But for all suitable variables we have 
  \begin{eqnarray*}
    \lefteqn{\mathbb E [ : \hat{X}^{\varepsilon}_s ( k_1) \hat{X}^{\varepsilon}_s (
    k_2) \hat{X}^{\varepsilon}_t ( k_4) : :
    \hat{X}^{\varepsilon}_{\overline{s}} ( \overline{k}_1)
    \hat{X}^{\varepsilon}_{\overline{s}} ( \overline{k}_2)
    \hat{X}^{\varepsilon}_t ( \overline{k}_4) :]}\\
     & = & 2 \delta_{k_1 = -
    \overline{k}_1} \frac{f ( \varepsilon k_1)^2}{| k_1 |^2} \delta_{k_2 = -
    \overline{k}_3} \frac{f ( \varepsilon k_2)^2}{| k_2 |^2} \delta_{k_3 = -
    \overline{k}_3} \frac{f ( \varepsilon k_3)^2}{| k_3 |^2} e^{- ( | k_1 |^2
    + | k_2 |^2) | s - \overline{s} |}\\
    &  & + 2 \delta_{k_1 = - \overline{k}_1} \frac{f ( \varepsilon k_1)^2}{|
    k_1 |^2} \delta_{k_2 = - \overline{k}_3} \frac{f ( \varepsilon k_2)^2}{|
    k_2 |^2} \delta_{k_3 = - \overline{k}_2} \frac{f ( \varepsilon k_3)^2}{|
    k_3 |^2} e^{- | k_1 |^2 | s - \overline{s} |} e^{- ( | k_3 |^2) | t -
    \overline{s} |} e^{- ( | k_2 |^2) | t - s |}\\
    &  &\ \ \times e^{- | k_1 |^2 | s - \overline{s} |} e^{- ( | k_3 |^2) | t -
    \overline{s} |}.
  \end{eqnarray*}
  By another easy computation the following holds
  $
  \mathbb E [ | \Delta_q ( I^{( 2)}_t) |^2] = E^{2, 1}_t + E^{2, 2}_t 
  $
  where
  $$ 
  E^{2, 1}_t = 2 \int_0^t \mathd s \int_0^s \mathd \overline{s}
     \sum_{\tmscript{\begin{array}{c}
       k, k_i \neq 0 \overline{}\\
       {}k_{124} = k
     \end{array}}} \theta ( 2^{- q} k)^2 \prod_i \frac{f ( \varepsilon
     k_i)^2}{| k_i |^2} c_{k_1, k_2} ( t - s) c_{k_1, k_2} ( t - \overline{s})
     e^{- ( | k_1 |^2 + | k_2 |^2) | s - \overline{s} |} 
  $$
  and
   \begin{multline*}E^{2, 1}_t = 2 \int_0^t \mathd s \int_0^s \mathd \overline{s}
     \sum_{\tmscript{\begin{array}{c}
       k \neq 0 \overline{}, k_i \neq 0,\\
       {}k_{124} = k
     \end{array}}} \theta ( 2^{- q} k)^2 \prod_i \frac{f ( \varepsilon
     k_i)^2}{| k_i |^2}\\ 
     \times c_{k_1, k_2} ( t - s) c_{k_1, k_4} ( t - \overline{s})
     e^{- | k_1 |^2 | s - \overline{s} |} e^{- | k_4 |^2 | t - \overline{s} |}
     e^{- | k_2 |^2 | t - s |} . \end{multline*}

  In $E^{2, 1}_t$, we have a symmetry in $k_1, k_2$, hence we can assume that
  $| k_1 | \ge | k_2 |$. Furthermore, we have
  $
  c_{k_1, k_2} ( t - s) \lesssim | t - s |^{- \frac{1 + \eta}{2}}
  $ 
  and
  $
   c_{k_1, k_2} ( t - \overline{s}) \lesssim | s - \overline{s} |^{- \frac{1
     + \eta}{2}} 
 $.
If we assume that $| k_1 | \ge | k_4 |$ and that $\eta' / 2 -
  \eta > 0$, we have
  \begin{eqnarray*}
    E^{2, 1}_t & \lesssim & \int_0^t \mathd s \int_0^s \mathd \overline{s} | t
    - s |^{- \frac{1 + \eta}{2}} | s - \overline{s} |^{- 1 + ( \eta' / 2 -
    \eta)} \sum_{\tmscript{\begin{array}{c}
      k \neq 0 \overline{}, k_i \neq 0,\\
      {}k_{124} = k
    \end{array}}} \theta ( 2^{- q} k)^2 \frac{1}{| k_1 |^{3 - \eta'} | k_2 |^2
    | k_4 |^2}\\
    & \lesssim & t^{\delta} \sum_{k \neq 0} \frac{\theta ( 2^{- q} k)^2}{| k
    |^{1 - \eta''}} \sum_{k_2, k_3} | k_2 |^{- 3 - \frac{\eta'' - \eta'}{2}} |
    k_4 |^{- 3 - \frac{\eta'' - \eta'}{2}}
     \lesssim  t^{\delta} 2^{q ( 2 + \eta'')},
  \end{eqnarray*}
  for $\eta'' > \eta'$. When $| k_4 | \ge | k_1 |$ it is pretty much the same
  computation.
  
  In $E^{2, 2}_t$, we can assume that $| k_2 | \ge | k_4 |$, hence
  \begin{eqnarray*}
    E^{2, 2}_t & \lesssim & \int_0^t \mathd s \int_0^t \mathd \overline{s}
    \sum_{\tmscript{\begin{array}{c}
      k \neq 0 \overline{}, k_i \neq 0,\\
      {}k_{124} = k\\
      | k_2 | \lesssim | k_4 |
    \end{array}}} \theta ( 2^{- q} k)^2 | k_1 |^{- 3 + \eta'} | k_2 |^{- 3 +
    \eta'} | k_4 |^2 | t - s |^{- 1 + \frac{\eta' - \eta}{2}} | s -
    \overline{s} |^{- 1 + \frac{\eta' - \eta}{2}}\\
    & \lesssim & t^{\delta} \sum_{\tmscript{\begin{array}{c}
      k \neq 0 \overline{}, k_i \neq 0,\\
      {}k_{124} = k
    \end{array}}} \theta ( 2^{- q} k)^2 | k |^{- 1 + \eta''} | k_1 |^{- 3 +
    \eta'} | k_2 |^{- 3 + \eta'} | k_4 |^2 \max ( | k_i |)^{1 - \eta''}
     \lesssim  t^{\delta} 2^{q ( 1 + \eta'')}.
  \end{eqnarray*}
  Finally by decomposing the previous expression depending on $|k_1|\ge |k_4|$ or $|k_4|\ge |k_1|$ we have the wanted bound.
  
  \paragraph{Terms in the fifth chaos.}
  
  For all suitable variables, we have
  {\small \begin{eqnarray*}
    \lefteqn{\mathbb E [ : \hat{X}^{\varepsilon}_s ( k_1) \hat{X}^{\varepsilon}_s (
    k_2) \hat{X}^{\varepsilon}_s ( k_3) \hat{X}^{\varepsilon}_t ( k_4)
    \hat{X}^{\varepsilon}_t ( k_5) : : \hat{X}^{\varepsilon}_{\overline{s}} (
    \overline{k}_1) \hat{X}^{\varepsilon}_{\overline{s}} ( \overline{k}_2)
    \hat{X}^{\varepsilon}_{\overline{s}} ( \overline{k}_3)
    \hat{X}^{\varepsilon}_t ( \overline{k}_4) \hat{X}^{\varepsilon}_t (
    \overline{k}_5) :]}\\
    & = & 12 \prod_{i = 1}^5 \frac{f ( \varepsilon
    k_i)^2}{| k_i |^2} \delta_{k_i = - \overline{k}_i} e^{- | s - \overline{s}
    | ( | k_1 |^2 + | k_2 |^2 + | k_3 |^2)}\\
    &  & + 72 \prod_{i = 1}^5 \frac{f ( \varepsilon k_i)^2}{| k_i |^2}
    \delta_{k_1 = - \overline{k}_1} \delta_{k_2 = - \overline{k}_2}
    \delta_{k_3 = - \overline{k}_4} \delta_{k_4 = - \overline{k}_3}
    \delta_{k_5 = - \overline{k}_5} e^{- | s - \overline{s} | ( | k_1 |^2 + |
    k_2 |^2) - | t - s |  | k_3 |^2 - | t - \overline{s} |  | k_4 |^2} +\\
    &  & + 36 \prod_{i = 1}^5 \frac{f ( \varepsilon k_i)^2}{| k_i |^2}
    \delta_{k_1 = - \overline{k}_1} \delta_{k_2 = - \overline{k}_4}
    \delta_{k_3 = - \overline{k}_5} \delta_{k_4 = - \overline{k}_3}
    \delta_{k_5 = - \overline{k}_2} e^{- | s - \overline{s} | | k_1 |^2 - | t
    - s |  ( | k_2 |^2 + | k_3 |^2) - | t - \overline{s} |  ( | k_4 |^2 + |
    k_5 |^2)}.
  \end{eqnarray*}
  }
  Then
   $$
   \mathbb E [ | \Delta_q I^1_t |^2] = E^{1, 1}_t + E^{1, 2}_t + E^{1,
     3}_t, 
   $$
  where
  $$
  E^{1, 1}_t = 12 \int_{[ 0, t]^2} \mathd s \mathd \overline{s} \theta (
     2^{- q} k)^2 \sum_{\tmscript{\begin{array}{c}
       k\\
       \begin{array}{c}
         {}k_{12345} = k
       \end{array}
     \end{array}}} \prod_{i = 1}^5 \frac{f ( \varepsilon k_i)^2}{| k_i |^2}
     e^{- | k_{123} |^2 | t - s |} e^{- ( | k_1 |^2 + | k_2 |^2 + | k_3
     |^2) | s - \overline{s} |},
  $$
  \begin{multline*} E^{1, 2}_t = 72 \int_{[ 0, t]^2} \mathd s \mathd \overline{s}
     \sum_{\tmscript{\begin{array}{c}
       k\\
       {}k_{12345} = k
     \end{array}}} \theta ( 2^{- q} k)^2 \prod_{i = 1}^5 \frac{f ( \varepsilon
     k_i)^2}{| k_i |^2}\\ 
     \times e^{- ( | k_{123} |^2 + | k_3 |^2) | t - s |} e^{- (
     | k_{124} |^2 + | k_4 |^2) | t - \overline{s} |} e^{- | s -
     \overline{s} | ( | k_1 |^2 + | k_2 |^2)} \end{multline*}
  and
  \begin{multline*}
    E^{1, 3}_t  =  36 \int_0^t \mathd s \int_0^t \mathd \overline{s}
    \sum_{\tmscript{\begin{array}{c}
      k \neq 0, k_i \neq 0\\
      \begin{array}{c}
        {}k_{12345} = k
      \end{array}
    \end{array}}} \theta ( 2^{- q} k)^2 \prod_{i = 1}^5 \frac{f ( \varepsilon
    k_i)^2}{| k_i |^2}\\
     \times e^{- ( | k_{123} |^2 + | k_2 |^2 + | k_3 |^2) | t - s |}
     e^{- ( | k_{145} |^2 + | k_5 |^2 + | k_4 |^2) | t - \overline{s}
    |} e^{- | s - \overline{s} | | k_1 |^2}.
  \end{multline*}

  \subparagraph{Estimation of $E^{1, 1}_t$.}
  
  Let us rewrite it in a form easier to handle:
 $$
   E^{1, 1}_t = 12 \int_{[ 0, t]^2} \mathd s \mathd \overline{s}
     \sum_{\tmscript{\begin{array}{c}
       k, k \neq 0\\
       k_1 + k_2 + l = k\\
       l_1 + l_2 + l_3 = l\\
       k_i \neq 0, l_i \neq 0
     \end{array}}} \theta ( 2^{- q} k)^2 \prod_{i = 1}^2 \frac{f ( \varepsilon
     k_i)^2}{| k_i |^2} \prod_{i = 1}^3 \frac{f ( \varepsilon l_i)^2}{| l_i
     |^2} e^{- | l |^2 | t - s |} e^{- ( | l_1 |^2 + | l_2 |^2 + | l_3 |^2) |
     s - \overline{s} |}. 
 $$   
  Thanks to the symmetries of this term, we can always assume that $| k_1 | =
  \max ( | k_i |)$ and $l_1 = \max ( | l_i |)$.
  
  For $l = 0$, we have
  \begin{align*}
&  \int_{[ 0, t]^2} \mathd s \mathd \overline{s}
     \sum_{\tmscript{\begin{array}{c}
       k, k \neq 0\\
       k_1 + k_2 = k\\
       l_1 + l_2 + l_3 = 0\\
       k_i \neq 0, l_i \neq 0
     \end{array}}} \theta ( 2^{- q} k)^2 \prod_{i = 1}^2 \frac{f ( \varepsilon
     k_i)^2}{| k_i |^2} \prod_{i = 1}^3 \frac{f ( \varepsilon l_i)^2}{| l_i
     |^2} e^{- ( | l_1 |^2 + | l_2 |^2 + | l_3 |^2) | s - \overline{s} |} 
 \\
   \lesssim & \int_{[ 0, t]^2} \mathd s \mathd \overline{s}
     \sum_{\tmscript{\begin{array}{c}
       k \neq 0
     \end{array}}} \theta ( 2^{- q} k)^2 | k |^{- 1 + \eta} \sum_{k_2 \neq 0}
     | k_{} |^{- 3 - \eta} \sum_{l_2 \neq 0, l_3 \neq 0} | l_2 |^{- 4 + \eta}
     | l_3 |^{- 4 + \eta} | s - \overline{s} |^{- 1 + \eta} 
\\
 \lesssim & 2^{q ( 2 + \eta)} t .
\end{align*}

  Let us assume that $| l | = \max ( | l |, | k_1 |)$. As we have the
  following estimate $| l_1 |^{- 1} \lesssim | l |^{- 1}$, the following bound
  holds:
  $$
  \int_{[ 0, t]^2} \mathd s \mathd \overline{s}
     \sum_{\tmscript{\begin{array}{c}
       k \neq 0
     \end{array}}} \theta ( 2^{- q} k)^2 | k |^{- 1 + \eta}
     \sum_{\tmscript{\begin{array}{c}
       k_1 k_2 \neq 0\\
       l_2, l_3 \neq 0
     \end{array}}} (| k_1 | |k_2|)^{- 4 + 9 \eta / 2} ( | t
     - s | | s - \overline{s} |)^{- 1 + \eta} | l_2 |^{- 3 - \eta} | l_3 |^{-
     3 - \eta} 
   $$  
   $$
  \lesssim t^{\eta} 2^{q ( 2 + \eta)}. 
  $$
  The case where $| k_1 | = \max ( | l |, | k_1 |)$ is similar, and the
  conclusion holds for $E^{1, 1}_t$.

  \subparagraph{Estimation of $E^{1, 2}_t$.}
  
  This term is symmetric in $k_1, k_2$ and in $k_3, k_4$. Hence, we can assume
  that $| k_1 | \ge | k_2 |$ and $| k_3 | \ge | k_4 |$ First let
  us assume that $| k_5 | = \max \{ | k_i | \}$. Then

  \begin{multline*} E^{1, 2}_t \lesssim \sum_{\tmscript{\begin{array}{c}
       k\\
       \begin{array}{c}
         {}k_{12345} = k
       \end{array}
     \end{array}}} \theta ( 2^{- q} k)^2 \int_0^t \mathd s \int_0^s \mathd
     \overline{s} ( | t - s | | s - \overline{s} |)^{- 1 + \eta} \\
     \times  |k_1 |^{- 4
     + 2 \eta} | k_2 |^{- 2} | k_3 |^{- 4 + 2 \eta} | k_4 |^{- 2} | k_5 |^{- (
     1 + \eta')} | k |^{- ( 1 - \eta')} 
     \end{multline*}
     \begin{align*}
  &\lesssim t^{\eta} \sum_k \theta ( 2^{- q} k)^2 | k |^{- ( 1 - \eta')}
     \sum_{\tmscript{\begin{array}{c}
       {}k_{12345} = k
     \end{array}}} | k_1 |^{- 7 / 2 + 2 \eta} | k_2 |^{- 3 - \eta' / 2} | k_3
     |^{- 7 / 2 + 2 \eta} | k_4 |^{- 3 - \eta' / 2} \\
  & \lesssim t^{\eta} 2^{( 2 + \eta') q},
\end{align*}
  for $\eta$ small enough.
  
  Then let us assume that $\max \{ | k_i | \} = | k_1 |$
 \begin{align*}
   E^{1, 2}_t 
   &\lesssim t^{\delta} \sum_{\tmscript{\begin{array}{c}
       k\\
       {}k_{12345} = k
     \end{array}}} \theta ( 2^{- q} k)^2 | k_1 |^{- 4 + 2 \eta} | k_2 |^{- 2}
     | k_3 |^{- 3 + \eta'} | k_4 |^{- 3 + \eta'} | k_5 |^{- 2} \\
     &\qquad \qquad \times \int_0^t \mathd
     s \int_0^s \mathd \overline{s} | t - s |^{- 1 + \eta'} | s - \overline{s}
     |^{- 1 + \eta} \\
& \lesssim t^{\eta'} \sum_{\tmscript{\begin{array}{c}
       k\\
       {}k_{12345} = k
     \end{array}}} \theta ( 2^{- q} k)^2 | k_{} |^{- 1 + \eta''} | k_2 |^{- 3
     - \eta''} \\
     & \qquad \qquad \qquad\times | k_3 |^{- 7 / 2 + ( 2 \eta + \eta'' + \eta') / 2} | k_4 |^{- 7
     / 2 + ( 2 \eta + \eta'' + \eta') / 2} | k_5 |^{- 3 - \eta''}  \\
  & \lesssim t^{\delta} 2^{( 2 + \eta') q}. 
\end{align*}
  For $\max \{ | k_i | \} = | k_3 |$ we have:
  \begin{align*}
 E^{1,2}_t  & \lesssim t^{\delta} \sum_{\tmscript{\begin{array}{c}
       k\\
       {}k_{12345} = k
     \end{array}}} \theta ( 2^{- q} k)^2 | k_1 |^{- 4 + \eta} | k_2 |^{- 2} |
     k_3 |^{- 4 + \eta'} | k_4 |^{- 2} | k_5 |^{- 2} 
  \\   
&  \lesssim t^{\delta} \sum_{\tmscript{\begin{array}{c}
       k\\
       {}k_{12345} = k
     \end{array}}} \theta ( 2^{- q} k)^2 | k_1 |^{- 3 + \eta + 1 / 4} | k_2
     |^{- 3 + 1 / 4} | k |^{- 1 + \eta'} | k_4 |^{- 3 + 1 / 4} | k_5 |^{- 3 +
     1 / 4} 
    \lesssim t^{\delta} 2^{( 2 + \eta') q}.
  \end{align*}
  Hence there exists $\delta > 0$ and $\nu > 0$ such that
  \[ E^{1, 2}_t \lesssim t^{\delta} 2^{( 2 + \nu) q}. \]

  \subparagraph{Estimation of $E^{1, 3}_t$.}
  
  Let us deal with this last term. Here the symmetries are in $k_2, k_3$ and
  $k_4, k_5$. Then we can suppose that $| k_2 | \ge | k_3 | \ge$
  and $| k_4 | \ge | k_5 |$. Furthemore, the role of $k_2, k_3$ and
  $k_4, k_5$ are symmetrical, then we can assume that $| k_1 | \ge | k_4
  |$, and we have
 \begin{multline*} E^{1, 3}_t = \int_{[ 0, t]^2} \mathd s \mathd \overline{s}
     \sum_{\tmscript{\begin{array}{c}
       k \neq 0, k_i \neq 0\\
       \begin{array}{c}
         {}k_{12345} = k
       \end{array}
     \end{array}}} \theta ( 2^{- q} k)^2 \prod_{i = 1}^5 \frac{f ( \varepsilon
     k_i)^2}{| k_i |^2} \\
     \times e^{- ( | k_{123} |^2 + | k_2 |^2 + | k_3 |^2) | t -
     s |} e^{- ( | k_{145} |^2 + | k_5 |^2 + | k_4 |^2) | t - \overline{s}
     |} e^{- | s - \overline{s} | | k_1 |^2}. 
     \end{multline*}
  If $| k_1 | = \max ( | k_i |)$ then
\begin{align*}
&  \lesssim \int_{[ 0, t]^2} \mathd s \mathd \overline{s} ( | t - s | | t -
     \overline{s} |)^{- 1 + \eta} \sum_{\tmscript{\begin{array}{c}
       k \neq 0, k_i \neq 0\\
       \begin{array}{c}
         {}k_{12345} = k
       \end{array}
     \end{array}}} \theta ( 2^{- q} k)^2 | k |^{- 1 + \eta} ( | k_2 | | k_3 |
     | k_3 | | k_4 |)^{- 7 / 4 + 3 \eta / 4} \\ 
 & \lesssim 2^{q ( 2 + \eta)} t^{\eta}. 
\end{align*}
  If $| k_2 | = \max ( | k_i |)$ then
\begin{align*}
 &\lesssim 2 \int_0^t \int_0^s \mathd s \mathd \overline{s} ( | t - s | | s
     - \overline{s} |)^{- 1 + \eta} \sum_{\tmscript{\begin{array}{c}
       k \neq 0, k_i \neq 0\\
       \begin{array}{c}
         {}k_{12345} = k
       \end{array}
     \end{array}}} \theta ( 2^{- q} k)^2 | k |^{- 1 + \eta} ( | k_1 | | k_3 |
     | k_3 | | k_4 |)^{- 7 / 4 + 3 \eta / 4} \\ 
 &\lesssim t^{\eta} 2^{q ( 2 + \eta)}. 
\end{align*}
  \end{proof}
  
  \appendix
\section{A commutation lemma}

We give the proof of  Lemma~\ref{lemma:Heat-flow-smoothing}. This proof is from \cite{these_perkowski} Lemmas 5.3.20 and 5.5.7. In fact we give a stronger result, and apply it with $\varphi(k) = \exp(-|k|^2/2)$.
\begin{lemma}\label{l:multiplier paraproduct commutator}
   Let $\alpha < 1$ and $\beta \in \R$. Let $\varphi \in \mathcal{S}$, let $u \in C^{\alpha}$, and $v \in C^{\beta}$. Then for every $\varepsilon > 0$ and every $\delta \ge - 1$ we have
   \begin{align*}
      \Vert  \varphi (\varepsilon \dD) \pi_< ( u, v ) - \pi_< (u, \varphi(\varepsilon \dD) v)\Vert_{\alpha + \beta + \delta} \lesssim \varepsilon^{- \delta} \Vert  u \Vert_{\alpha} \Vert  v \Vert_{\beta}, 	
   \end{align*}
   where
         \[\varphi(\cD)u=\cF^{-1}(\varphi \cF u).\]
\end{lemma}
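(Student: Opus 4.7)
The plan is to reduce the estimate to a block-by-block pointwise bound via Littlewood--Paley decomposition, and for each block to combine a first-order Taylor expansion of the symbol $\varphi(\eps\,\cdot)$ (which uses $\alpha<1$) with the Schwartz decay of $\varphi$.

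Setting $S_{j-1}u:=\sum_{i\le j-2}\Delta_i u$ and using that $\varphi(\eps\dD)$ is a Fourier multiplier, one has
\[
\varphi(\eps\dD)\pi_<(u,v)-\pi_<(u,\varphi(\eps\dD)v)=\sum_{j\ge-1}F_j,\qquad F_j:=[\varphi(\eps\dD),S_{j-1}u]\Delta_j v,
\]
and each summand $F_j$ has Fourier support in an annulus of size $\sim 2^j$ since both $S_{j-1}u\,\Delta_j v$ and $\varphi(\eps\dD)\Delta_j v$ do. Writing $t:=\eps 2^j$, the conclusion reduces, by the Besov characterization, to the uniform block estimate
\[
\|F_j\|_\infty\lesssim t^{-\delta}\,2^{-j(\alpha+\beta)}\|u\|_\alpha\|v\|_\beta,\qquad j\ge-1.
\]

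Performing a first-order Taylor expansion of the symbol $\varphi(\eps\xi)-\varphi(\eps\eta)=\eps(\xi-\eta)\cdot(\nabla\varphi)(\eps\eta)+\eps^2 r(\xi,\eta)$ in the bilinear Fourier representation of $F_j$ yields the identity
\[
F_j=-i\eps\,\nabla S_{j-1}u\cdot(\nabla\varphi)(\eps\dD)\Delta_j v+\eps^2 R_j,
\]
where $R_j$ is the bilinear remainder built from $\nabla^2\varphi$ evaluated along the segment $\eps[\eta,\xi]$. The hypothesis $\alpha<1$ gives the bound $\|\nabla S_{j-1}u\|_\infty\lesssim 2^{j(1-\alpha)}\|u\|_\alpha$; and since $\nabla\varphi\in\mathcal S$, the multiplier $(\nabla\varphi)(\eps\dD)$ acts on the spectrally localized function $\Delta_j v$ by convolution with a kernel of $L^1$-norm $\lesssim_N(1+t)^{-N}$ for any $N$ (which one proves by rescaling $\xi=2^j\tilde\xi$ to reduce to bounding the inverse Fourier transform of $\tilde\theta(\tilde\xi)(\nabla\varphi)(t\tilde\xi)$ in $L^1$, then estimating this by a Sobolev norm decaying like $t^{-N}$ for $t\ge 1$). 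Combined this produces
\[
\|\eps\,\nabla S_{j-1}u\cdot(\nabla\varphi)(\eps\dD)\Delta_j v\|_\infty\lesssim t\,(1+t)^{-N}\,2^{-j(\alpha+\beta)}\|u\|_\alpha\|v\|_\beta,
\]
and for $N\ge 1+\delta$ one verifies $t(1+t)^{-N}\lesssim t^{-\delta}$ uniformly in $t>0$: in the regime $t\le 1$ this is equivalent to $t^{1+\delta}\le 1$, which uses precisely the hypothesis $\delta\ge-1$; in the regime $t\ge 1$ it follows from $t^{1-N}\le t^{-\delta}$.

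The remainder $\eps^2 R_j$ is treated by the same scheme: its bilinear Fourier symbol involves $\partial^2\varphi$ evaluated at $\eps(\eta+s(\xi-\eta))$ with $s\in[0,1]$, and the spectral localization of $S_{j-1}u$ (frequency $\le 2^{j-1}$) against $\Delta_j v$ (frequency $\sim 2^j$) keeps this interpolated argument of order $2^j$, so that $\partial^2\varphi\in\mathcal S$ yields the same $(1+t)^{-N}$ kernel bound after the rescaling. Combined with $\|\nabla^2 S_{j-1}u\|_\infty\lesssim 2^{j(2-\alpha)}\|u\|_\alpha$ this gives a remainder contribution $\lesssim t^2(1+t)^{-N}\,2^{-j(\alpha+\beta)}\|u\|_\alpha\|v\|_\beta$, again dominated by $t^{-\delta}\,2^{-j(\alpha+\beta)}\|u\|_\alpha\|v\|_\beta$ for $N\ge 2+\delta$. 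The main technical obstacle is precisely this last step: translating the pointwise Schwartz decay of $\partial^2\varphi$ into an $L^1$ bound on the bilinear kernel associated with a symbol of the form $m(\xi,\eta)=(\partial^2\varphi)(\eps(\eta+s(\xi-\eta)))$, which is accomplished by the same rescaling-and-Sobolev argument as for the main term, exploiting that the spectral geometry (ball $\times$ annulus) keeps the rescaled symbol on a fixed compact region of frequency space.
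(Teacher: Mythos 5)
Your proposal is correct, and its skeleton coincides with the paper's: the same decomposition into blocks $[\varphi(\eps\dD),S_{j-1}u]\Delta_j v$, the same reduction (via the spectral support in $2^j\mathcal A$ and the Besov characterization of localized series) to a uniform $L^\infty$ bound on each block, the same use of $\alpha<1$ through $\|\nabla S_{j-1}u\|_{L^\infty}\lesssim 2^{j(1-\alpha)}\|u\|_\alpha$, and the same mechanism of trading the factor $\eps 2^j$ gained from one derivative against the Schwartz decay of $\varphi$ at scale $\eps 2^j$ (with $\delta\ge-1$ entering exactly where you put it, in the regime $\eps 2^j\le1$). Where you diverge is in how the commutator is expanded. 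The paper follows Lemma~2.97 of \cite{BCD-bk}: it localizes the symbol as $\psi(2^{-j}\cdot)\varphi(\eps\cdot)$, writes the multiplier as a convolution operator, and applies the mean-value inequality in \emph{physical} space, which yields a single term controlled by the weighted kernel norm $\|x^\eta\cF^{-1}(\psi(2^{-j}\cdot)\varphi(\eps\cdot))\|_{L^1}$ and no genuine remainder; the price is an explicit two-regime split at $j_0$ with $\eps 2^{j_0}\sim1$. You instead Taylor-expand the symbol in \emph{frequency} space to second order, which gives a clean main term but forces you to bound a bilinear operator whose symbol $(\partial^2\varphi)(\eps(\eta+s(\xi-\eta)))$ depends on both frequencies. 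That step is the only place where your argument is materially harder than the paper's: it requires checking that the interpolated frequency stays in an annulus of size $\sim2^j$ (true here because the $u$-frequency ball is strictly smaller than the $v$-frequency annulus) and then a Coifman--Meyer-type kernel estimate (inverse Fourier transform of the rescaled symbol in $L^1(\dd x\,\dd y)$ via $2d+1$ integrations by parts, uniformly in $s$) to convert the pointwise decay $(1+\eps2^j)^{-N}$ of the symbol into an operator bound. You correctly identify this as the technical crux and your sketch of it is workable, but it is precisely the complication that the paper's physical-space version of the commutator estimate is designed to avoid.
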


\begin{proof}
We define for $j\ge -1$, 
\[S_{j-1}u=\sum_{i=-1}^{j-2}\Delta_i u\]
and
   \begin{align*}
      \varphi (\varepsilon \dD) \pi_< (u, v) - \pi_<(u, \varphi(\varepsilon \dD) v ) = \sum_{j\ge - 1} (\varphi(\varepsilon \dD)(S_{j-1} u \Delta_j v ) - S_{j-1} u \Delta_j \varphi(\varepsilon \dD)v),
   \end{align*}
   where every term of this series has a Fourier transform with support in an annulus of the form $2^j \mathcal{A}$. Lemma~2.69 in \cite{BCD-bk} implies that it is enough to control the $L^{\infty}$ norm of each term. Let $\psi \in \mathcal{D}$ with support in an annulus be such that $\psi \equiv 1$ on $\mathcal{A}$. We have
   \begin{align*}
      &\varphi(\varepsilon \dD)(S_{j-1} u \Delta_j v ) - S_{j-1} u \Delta_j \varphi(\varepsilon \dD)v \\
      &\hspace{40pt} = (\psi(2^{- j} \cdot) \varphi(\varepsilon \cdot))(\dD) (S_{j-1} u \Delta_j v) - S_{j-1} u(\psi(2^{- j} \cdot) \varphi(\varepsilon \cdot)) (\dD) \Delta_j v \\
      &\hspace{40pt} = [(\psi(2^{- j} \cdot) \varphi(\varepsilon \cdot))(\dD), S_{j-1} u ]\Delta_j v,
   \end{align*}
   where $[(\psi(2^{- j} \cdot) \varphi(\varepsilon \cdot))(\dD), S_{j-1} u ]$ denotes the commutator. In the proof of Lemma 2.97 in \cite{BCD-bk}, it is shown that writing the Fourier multiplier as a convolution operator and applying a first order Taylor expansion and then Young inequality yields to
   \begin{align}\label{e:multiplier paraproduct commutator pr1}\nonumber
      &\Vert [(\psi(2^{- j} \cdot) \varphi(\varepsilon \cdot))(\dD), S_{j-1} u ] \Delta_j v \Vert_{L^{\infty}}\\
      &\hspace{40pt} \lesssim \sum_{\eta \in \N^d: |\eta| = 1} \Vert  x^{\eta} \cF^{- 1} ( \psi( 2^{- j} \cdot ) \varphi (\varepsilon \cdot)) \Vert_{L^1} \Vert  \partial^{\eta} S_{j - 1} u \Vert_{L^{\infty}} \Vert  \Delta_j v \Vert_{L^{\infty}} .
   \end{align}
   Now $\cF^{- 1}(f(2^{- j} \cdot) g(\varepsilon \cdot)) = 2^{j d} \cF^{- 1} ( f g( \varepsilon 2^j \cdot ) ) ( 2^j \cdot )$ for every $f, g$, and thus we have for every multi-index $\eta$ of order one
   \begin{align}\label{e:multiplier paraproduct commutator pr4} \nonumber
      &\Vert  x^{\eta} \cF^{- 1} ( \psi ( 2^{- j} \cdot) \varphi (\varepsilon \cdot)) \Vert_{L^1}\\ \nonumber
      &\hspace{40pt} \le 2^{- j} \Vert  \cF^{-1}((\partial^{\eta} \psi) (2^{- j} \cdot) \varphi ( \varepsilon\cdot ) ) \Vert_{L^1} + \varepsilon \Vert \cF^{- 1} ( \psi ( 2^{- j} \cdot) \partial^{\eta} \varphi(\varepsilon \cdot))\Vert_{L^1} \\ \nonumber
      &\hspace{40pt} = 2^{- j} \Vert  \cF^{- 1}((\partial^{\eta}\psi ) \varphi ( \varepsilon 2^j \cdot)) \Vert_{L^1} + \varepsilon \Vert  \cF^{- 1} ( \psi \partial^{\eta} \varphi ( \varepsilon 2^j \cdot ) )\Vert_{L^1} \\ \nonumber
      &\hspace{40pt} \lesssim 2^{- j} \Vert  ( 1 + | \cdot | )^{2 d} \cF^{- 1}((\partial^{\eta} \psi) \varphi(\varepsilon 2^j \cdot)) \Vert_{L^{\infty}} + \varepsilon \Vert  ( 1 + | \cdot | )^{2 d} \cF^{- 1} ( \psi \partial^{\eta} \varphi(\varepsilon 2^j \cdot)) \Vert_{L^{\infty}}\\ \nonumber
      &\hspace{40pt} = 2^{- j} \Vert  \cF^{- 1} ( ( 1 - \Delta )^d((\partial^{\eta} \psi) \varphi(\varepsilon 2^j \cdot)))\Vert_{L^{\infty}} + \varepsilon \Vert \cF^{- 1}((1 - \Delta)^d(\psi \partial^{\eta} \varphi ( \varepsilon 2^j \cdot)) \Vert_{L^{\infty}}\\
      &\hspace{40pt} \lesssim 2^{- j} \Vert (1 - \Delta)^d((\partial^{\eta} \psi) \varphi(\varepsilon 2^j \cdot))\Vert_{L^{\infty}} + \varepsilon \Vert(1 - \Delta)^d (\psi \partial^{\eta} \varphi(\varepsilon 2^j \cdot)) \Vert_{L^{\infty}},
   \end{align}
   where the last step follows because $\psi$ has compact support. For $j$ satisfying $\varepsilon 2^j \ge 1$ we obtain
   \begin{align}\label{e:multiplier paraproduct commutator pr2}
      \Vert  x^{\eta} \cF^{- 1} (\varphi (\varepsilon \cdot) \psi( 2^{- j} \cdot ) ) \Vert_{L^1} \lesssim (\varepsilon + 2^{- j}) (\varepsilon 2^j )^{2 d} \sum_{\eta: |\eta| \le 2 d + 1} \Vert \partial^{\eta} \varphi(\varepsilon 2^j \cdot) \Vert_{L^{\infty}(\mathrm{supp}(\psi))},
   \end{align}
   where we used that $\psi$ and all its partial derivatives are bounded, and where $L^{\infty}(\mathrm{supp} (\psi))$ means that the supremum is taken over the values of $\partial^{\eta}\varphi(\varepsilon 2^j \cdot)$ restricted to $\mathrm{supp}(\psi)$. Now $\varphi$ is a Schwartz function, and therefore it decays faster than any polynomial. Hence, there exists a ball $\mathcal{B}_{\delta}$ such that for all $x \notin \mathcal{B}_{\delta}$ and all $|\eta|\le 2 d + 1$ we have
   \begin{align}\label{e:multiplier paraproduct commutator pr3}
      |\partial^{\eta}\varphi ( x ) | \le | x |^{-2d-1-\delta}.
   \end{align}
   Let $j_0 \in \N$ be minimal such that $2^{j_0} \varepsilon \mathcal{A} \cap \mathcal{B}_{\delta} = \emptyset$ and $\varepsilon 2^{j_0} \ge 1$. Then the combination of \eqref{e:multiplier paraproduct commutator pr1}, \eqref{e:multiplier paraproduct commutator pr2}, and \eqref{e:multiplier paraproduct commutator pr3} shows that for all $j \ge j_0$,
   \begin{align*}
      & \Vert [(\psi(2^{- j} \cdot) \varphi(\varepsilon \cdot))(\dD), S_{j-1} u ] \Delta_j v \Vert_{L^{\infty}} \\
      &\hspace{40pt} \lesssim ( \varepsilon + 2^{- j} ) ( \varepsilon 2^j)^{2 d} \sum_{\eta : |\eta| \le 2 d + 1} \Vert (\partial^{\eta} \varphi) ( \varepsilon 2^j \cdot) \Vert_{L^{\infty}(\mathrm{supp}(\psi))} 2^{j ( 1 - \alpha )} \Vert  u \Vert_{\alpha} 2^{-j \beta} \Vert  v \Vert_{\beta} \\
      &\hspace{40pt} \lesssim (\varepsilon + 2^{-j})(\varepsilon 2^j)^{2d}(\varepsilon 2^j)^{-2d -1 -\delta} 2^{j(1 -\alpha -\beta)} \Vert u \Vert_{\alpha} \Vert v \Vert_{\beta}\\
      &\hspace{40pt} \lesssim ( 1 + ( \varepsilon 2^j )^{- 1}) \varepsilon^{- \delta} 2^{- j(\alpha + \beta + \delta)} \Vert u \Vert_{\alpha} \Vert  v \Vert_{\beta} .
   \end{align*}
   Here we used that $\alpha < 1$ in order to obtain $\Vert  \partial^{\eta} S_{j-1} u \Vert_{L^{\infty}} \lesssim 2^{j(1-\alpha)} \Vert u \Vert_{L^{\infty}}$. Since $\varepsilon 2^j \ge 1$, we have shown the desired estimate for $j \ge j_0$. On the other side Lemma~2.97 in \cite{BCD-bk} implies that for every $j \ge -1$
   \begin{align*}
      \Vert [\varphi ( \varepsilon \dD), S_{j-1}u] \Delta_j v \Vert_{L^{\infty}} \lesssim \varepsilon \max_{\eta \in \N^d: |\eta|=1} \Vert \partial^\eta S_{j - 1} u    \Vert_{L^{\infty}} \Vert  \Delta_j v \Vert_{L^{\infty}} \lesssim \varepsilon 2^{j ( 1 - \alpha - \beta )} \Vert u \Vert_{\alpha} \Vert  v \Vert_{\beta}.
   \end{align*}
   Hence, we obtain the bound for $j < j_0$, i.e. for $j$ satisfying $2^j \varepsilon \lesssim 1$,
   \begin{align*}
      \Vert [\varphi ( \varepsilon \dD), S_{j - 1} u ] \Delta_j v \Vert_{L^{\infty}} \lesssim (\varepsilon 2^j)^{1 + \delta} \varepsilon^{- \delta} 2^{- j(\alpha + \beta + \delta )} \Vert u \Vert_{\alpha} \Vert  v \Vert_{\beta}\lesssim \varepsilon^{- \delta} 2^{- j ( \alpha + \beta + \delta)} \Vert  u \Vert_{\alpha} \Vert  v \Vert_{\beta},
   \end{align*}
   where we used that $\delta \ge - 1$. This completes the proof.
\end{proof}

\begin{bibdiv}
    \begin{biblist}

\bib{BCD-bk}{book}{
author={H. Bahouri},
author={J-Y. Chemin},
author={R. Danchin},
title={\it Fourier analysis and nonlinear partial differential equations.},
 series={Grundlehren der Mathematischen Wissenschaften [Fundamental Principles of Mathematical Sciences].},
 volume={343},
 publisher={Springer},
 place={Heidelberg},
 date={2011}}

\bib{bertini_stochastic_1993}{article}{
	title = {Stochastic Quantization, Stochastic Calculus and Path Integrals: Selected Topics},
	volume = {111},
	journal = {Progress of Theoretical Physics Supplement},
	author = {Bertini, L.},
	author = {Jona-Lasinio, G.},
	author = {Parrinello, C.},
	year = {1993},
	pages = {83--113},
}



\bib{da_prato_strong_2003}{article}{
	title = {Strong solutions to the stochastic quantization equations},
	volume = {31},
	number = {4},
	journal = {The Annals of Probability},
	author = {Da Prato, G.},
	author = {Debussche, A.},
	year = {2003},
	pages = {1900--1916},
}

\bib{grr}{article}{
author={A. M. Garsia}, 
author={E. Rodemich}, 
author={H. Rumsey},
title={A real variable lemma and the continuity of paths of some Gaussian processes}, 
journal={Indiana Univiversity Mathematical Journal},
volume={20},
year={1970},
number={6}, 
pages={565--578},
}

\bib{gub}{article}{
  author={Gubinelli, M.},
  author={Perkowski, N.},
 title={KPZ reloaded},
 url={http://arxiv.org/abs/1508.03877},
 journal = {{arXiv:1508.03877}},
 year={2015},
 note={preprint}
}

\bib{gubinelli_controlling_2004}{article}{
	title = {Controlling rough paths},
	volume = {216},
	number = {1},
	journal = {Journal of Functional Analysis},
	author = {Gubinelli, M.},
	year = {2004},
	pages = {86--140},
	}

\bib{gubinelli_paraproducts_2012}{article}{
	title = {Paracontrolled distributions and singular PDEs},
	journal = {Forum of Mathematics, Pi},
	author = {Gubinelli, M.},
	author={Imkeller, P.},
	author={Perkowski, N.},
	year = {2015},
	volume={3},
	number={6}
}

\bib{hairer_theory_2013}{article}{	
year={2014},
journal={Inventiones Mathematicae},
title={A theory of regularity structures},
author={Hairer, M.},
pages={269--504},
volume={198},
number={2}

}

\bib{HW}{article}{
	title = {Large deviations for white-noise driven, nonlinear stochastic PDEs in two and three dimensions},
	journal={Annales de la Faculté des Sciences de Toulouse},
	volume={24},
	number={1},
	year={2015},
	pages={55-92},
	author = {Hairer, M.},
	author = {Weber, H.},
	year = {2014},
	}

\bib{hairer_solving_2013}{article}{
	title = {Solving the {KPZ} equation},
	volume = {178},
	number = {2},
	Journal= {Annals of Mathematics},
	author = {Hairer, M.},
	year = {2013},
	pages = {559--664}
}
 \bib{janson}{book}{,
	title = {Gaussian Hilbert Spaces},
	publisher = {Cambridge University Press},
	author = {Janson, S.},
	year = {1997},
}

\bib{jona-lasinio_stochastic_1985}{article}{
	title = {On the stochastic quantization of field theory},
	volume = {101},
	issn = {0010-3616, 1432-0916},
	url = {http://projecteuclid.org/euclid.cmp/1104114183},
	number = {3},
	journal = {Communications in Mathematical Physics (1965-1997)},
	author = {Jona-Lasinio, G.},
	author={Mitter, P. K.},
	year = {1985},
	pages = {409--436},
}

\bib{jona-lasinio_large_1990}{article}{
	title = {Large deviation estimates in the stochastic quantization of $\Phi^4_2$},
	volume = {130},
	issn = {0010-3616, 1432-0916},
	url = {http://projecteuclid.org/euclid.cmp/1104187931},
	number = {1},
	journal = {Communications in Mathematical Physics},
	author = {Jona-Lasinio, G.},
	author={Mitter, P. K.},
	year = {1990},
	pages = {111--121},
}

\bib{HM1}{article}{
	title = {Global well-posedness of the dynamic $\Phi^4$ model in the plane},
	journal = {The Annals of Probability},
		author={Mourrat, J.C.},
	author = {Weber, H.},
	year = {2016},
           note={Accepted for publication},
}
\bib{HM2}{article}{
	title = {Global well-posedness of the dynamic $\Phi_4^3$ model on the torus },
	journal = {arXiv:1601.01234},
		author={Mourrat, J.C.},
	author = {Weber, H.},
	year = {2015},
           note={preprint},
}

\bib{lyons_differential_2004}{book}{
	title = {Differential equations driven by rough paths},
	url = {http://sag.maths.ox.ac.uk/tlyons/st_flour/StFlour.pdf},
	serie = {Ecole d'été de Probabilités de Saint-Flour {XXXIV}},
	author = {Lyons, T.},
	author = {Caruana, M.},
	author =  {L\'evy, T.},
	publisher={Springer},
	volume={1908},
	place={Heidelberg},
	year={2004}}

\bib{these_perkowski}{thesis}{
    author={Perkowski, N.},
	title = {Studies of Robustness in Stochastic Analysis and Mathematical Finance},
	publisher={Humboldt-Universit\"at zu Berlin},
	note = {PhD Thesis},
	year={2013}
	}
 
\end{biblist}
\end{bibdiv}

\end{document}